\theoremstyle{plain}
\newtheorem{thm}{Theorem}[section]
\newtheorem{appxlem}{Lemma}[section]
\newtheorem{corollary}[thm]{Corollary}
\newtheorem{assumptions}[thm]{Assumptions}
\theoremstyle{remark}
\newtheorem{remark}[thm]{Remark}
\newcommand{\lvert}{|}
\newcommand{\rvert}{|}
\newcommand{\mod}{\mbox{mod}}
\begin{document}

\title[Filtering for linear wave equations]
{Kalman filtering and smoothing for linear wave equations with model error}

\author{Wonjung Lee$^{1,2}$, D. McDougall$^1$ and A.M. Stuart$^1$}

\address{$^1$ Mathematics Institute, University of Warwick, Coventry, CV4 7AL, UK}
\address{$^2$ Mathematical Institute, University of Oxford, Oxford, OX1 3LB, UK}

\ead{
\\
Wonjung.Lee@maths.ox.ac.uk\\
D.McDougall@warwick.ac.uk\\
A.M.Stuart@warwick.ac.uk \quad http://www.warwick.ac.uk/~masdr/
}

\begin{abstract}
Filtering is a widely used methodology for the incorporation
of observed data into time-evolving systems. It
provides an online approach to state estimation inverse
problems when data is acquired sequentially. The Kalman filter
plays a central role in many applications because it
is exact for linear systems subject to Gaussian noise,
and because it forms the basis for many approximate filters
which are used in high dimensional systems. The aim of this
paper is to study the effect of model error on the Kalman
filter, in the context of linear wave propagation problems. 
A consistency result is proved when no model error is
present, showing recovery of the true signal in the
large data limit. This result, however, is not robust: it
is also proved that arbitrarily small
model error can lead to inconsistent recovery
of the signal in the large data limit.
If the model error is in the form of a constant shift to the
velocity, the filtering and smoothing
distributions only recover a partial Fourier expansion, 
a phenomenon related to aliasing. On the other hand, for a class
of wave velocity model errors which are time-dependent, it
is possible to recover the filtering distribution exactly,
but not the smoothing distribution.
Numerical results are presented which corroborate the theory, 
and also to propose a computational approach which overcomes
the inconsistency in the presence of model error, by
relaxing the model.
\end{abstract}

\submitto{\IP}

\section{Introduction}
Filtering is a methodology for the incorporation of
data into time-evolving systems \cite{AndersonMoore79,PFLit01}. 
It provides an online approach to state estimation inverse
problems when data is acquired sequentially. 
In its most general form the dynamics and/or observing
system are subject to noise and the objective is to
compute the probability distribution of the current state,
given observations up to the current time, in a sequential
fashion.  The Kalman filter \cite{Kalman60} carries out this 
process exactly for linear dynamical systems subject to
additive Gaussian noise. A key aspect of filtering in many 
applications is to understand the effect of model error --
the mismatch between model used to filter and the source
of the data itself. 
In this paper we undertake a study
of the effect of model error on the Kalman filter in the
context of linear wave problems.

Section \ref{Kalman} is devoted to describing the linear wave
problem of interest, and deriving the Kalman filter
for it. The iterative formulae for the mean and
covariance are solved and the equivalence (as measures)
of the filtering distribution at different times is
studied. In section \ref{MCTI} we study consistency of 
the filter, examining its behaviour in the large time limit 
as more and more observations are accumulated, at points 
which are equally spaced in time. It is shown that, in the 
absence of model error, the filtering distribution on the
current state converges to a Dirac measure on the truth.
However, for the linear advection equation, it is also shown 
that arbitrarily small model error, in the form of a shift
to the wave velocity, destroys this property: the filtering distribution
converges to a Dirac measure, but it is not centred on the 
truth.
Thus the order of two operations,
namely the successive incorporation of data and the 
limit of vanishing model error,
cannot be switched; this means, practically, that small
model error can induce order one errors in filters, 
even in the presence of large amounts of data. 
All of the results in section \ref{MCTI} apply to the
smoothing distribution on the initial condition, as well
as the filtering distribution on the current state.
Section \ref{tdme} concerns non-autonomous systems, and
the effect of model error. We study the linear advection 
equation in two dimensions with time-varying
wave velocity.
Two forms of model error are studied: an error in the
wave velocity which is integrable in time, and a white
noise error. In the first case it is shown that the
filtering distribution converges to a Dirac measure on the
truth, whilst the smoothing distribution converges to
a Dirac measure which is in error, i.e., not centred on the truth.
In the second, white
noise, case both the filter and smoother converge to
a Dirac measure which is in error. In section \ref{BIPsection}
we present numerical results which illustrate the theoretical
results. We also describe a numerical approach which overcomes
the effect of model error by relaxing the model to allow the
wave velocity to be learnt from the data.

We conclude the introduction with a brief review of
the literature in this area. Filtering in high dimensional
systems is important in a range of applications, especially
within the atmospheric and geophysical sciences
\cite{bennett2002inverse,EnKFEvensen94,kal03,van2009particle}.
Recent theoretical studies have shown how the Kalman filter
can not only systematically incorporate data into a model,
but also stabilize model error arising from an unstable
numerical discretization \cite{MG07},
from an inadequate choice of parameter \cite{smith2009variational,smith2010hybrid},
and the effect
of physical model error 
in a particular application is discussed
in \cite{GN00}.
The Kalman filter is used as the basis for a number
of approximate filters which are employed in
nonlinear and non-Gaussian problems, and the
ensemble Kalman filter in particular is widely used in this 
context \cite{EnKFEvensen94,Evensen06,EnKFEvensenVanLeeuwen98}.
Although robust and widely useable, the ensemble
Kalman filter does not provably reproduce the true
distribution of the signal in the large ensemble limit, 
given data, except in the Gaussian case. 
For this reason it would be desirable to
use the particle filter \cite{PFLit01} on highly 
non-Gaussian systems. However, recent theoretical work
and a range of computational experience shows that, in its
current form, particle filters will not work well in high
dimensions \cite{SBBA08,bengtsson2008curse,bickel2008sharp}.
As a consequence a great deal of research activity
is aimed at the development of various approximation
schemes within the filtering context; see
\cite{CK04,CT09a,CT09b,HM08,CHM,MHG10} for example.
The subject of consistency of Bayesian estimators for
noisily observed systems, which forms the core of our theoretical
work in this paper, is an active area of research. In the
infinite dimensional setting, much of this work is concerned 
with linear Gaussian systems, as we are here, but is
primarily aimed at the perfect model scenario
\cite{FS09,NP08,vVvZ08}. The issue of model error
arising from spatial discretization, when filtering linear PDEs,
is studied in \cite{P06}. The idea of relaxing the model
and learning parameters in order to obtain a better
fit to the data, considered in our numerical studies,
is widely used in filtering (see the chapter by
K\"{u}nsch in \cite{PFLit01}, and the paper \cite{SDBNS09}
for an application in data assimilation).

\section{Kalman Filter on Function Space}
\label{Kalman}
\subsection{Statistical model for discrete observations}
The test model, which we propose here, is a class of PDEs
\begin{eqnarray}
\label{eq:generalmodel}
\partial_t v+\mathcal{L} v =0,
\quad \forall (x,t) \in \mathbf{T}^2 \times (0,\infty)
\end{eqnarray}
on a two dimensional torus.  
Here
$\mathcal{L}$ is an anti-Hermitian operator
satisfying $\mathcal{L}^* = -\mathcal{L}$ where
$\mathcal{L}^*$ is the adjoint in $L^2(\mathbf{T}^2)$.  
Equation~(\ref{eq:generalmodel}) describes a prototypical linear wave
system and the advection equation with velocity $c$ is the simplest example:
\begin{eqnarray}
\label{eq:specificmodel}
\partial_t v+c\cdot \nabla v =0,
\quad \forall (x,t) \in \mathbf{T}^2 \times (0,\infty).
\end{eqnarray}
The state estimation problem for Equation~(\ref{eq:generalmodel}) requires to
find the `best' estimate of the solution $v(t)$ (shorthand for
$v(\cdot,t)$) given a random initial condition $v_0$ and a set of noisy
observations, called data.
Suppose the data is collected at discrete times $t_n=n\Delta t$,
then we assume that the entire $v_n = v(t_n)$ solution on the
torus is observed with additive noise $\eta_n$ at time $t_n$, and further
that the $\eta_n$ are independent for different $n$.
Realizations of the noise $\eta_n$ are $L^2(\mathbf{T}^2)$-valued random fields
and the observations $y_n$ are given by
\begin{eqnarray}
\label{eq:discreteobservation}
y_n = v_n+\eta_n = e^{-t_n \mathcal{L}}v_0+\eta_n, \quad \forall x\in \mathbf{T}^2.
\end{eqnarray}
Here $e^{-t\mathcal{L}}$ denotes the forward solution operator for (\ref{eq:generalmodel})
through $t$ time units.
Let $Y_N= \{ y_1,\ldots,y_N \}$ be the collection of data up to time $t_N$,
then we are interested in finding
the conditional distribution $\mathbb{P}\left( v_n \vert Y_N \right)$
on the Hilbert space $L^2(\mathbf{T}^2)$.
If $n=N$, this is called the filtering
problem, if $n<N$ it is the smoothing problem and for $n>N$, the
prediction problem. We here emphasize that all of the problems 
are equivalent for our deterministic system
in that any one measure defines the other simply by a push forward
under the linear map defined by Equation~(\ref{eq:generalmodel}).

In general calculation of the filtering distribution
$\mathbb{P}(v_n|Y_n)$ is computationally challenging
when the state space for $v_n$ is large, as it is here. 
A key idea is to estimate the signal $v_n$
through sequential updates consisting of a two-step process:
prediction by time evolution, and analysis through data assimilation.  
We first perform a one-step statistical prediction to obtain
$\mathbb{P}\left( v_{n+1} \vert Y_{n} \right)$
from $\mathbb{P}\left( v_{n} \vert Y_{n} \right)$ through some 
forward operator.  This is followed by an analysis step
which corrects the probability
distribution on the basis of the statistical input of noisy observations of the system
using Bayes rule:
\begin{eqnarray}
\label{eq:Bayesian}
\frac{\mathbb{P}(v_{n+1} \vert Y_{n+1})}{\mathbb{P}(v_{n+1} \vert Y_{n})} \propto
\mathbb{P}\left( y_{n+1} \vert v_{n+1} \right).
\end{eqnarray}
This relationship exploits the assumed independence of the observational
noises $\eta_n$ for different $n$.
In our case, where the signal $v_n$ is a function,
this identity should be interpreted
as providing the Radon-Nikodym derivative (density)
of the measure $\mathbb{P}(dv_{n+1}|Y_{n+1})$ with respect to $\mathbb{P}(dv_{n+1}|Y_{n})$
~\cite{cotter2009bayesian}.

In general implementation of this scheme is non-trivial 
as it requires approximation of the probability distributions
at each step indexed by $n$. In the case of infinite
dimensional dynamics this may be particularly challenging.
However, for linear dynamical systems such as
(\ref{eq:generalmodel}), together with
linear observations (\ref{eq:discreteobservation})
subject to Gaussian observation noise $\eta_n$
this may be achieved by use of the Kalman filter
\cite{Kalman60}.
Our work in this paper
begins with Theorem~\ref{inflinearfilter},
which is a straightforward
extension of the traditional Kalman filter theory in finite dimension
to measures on an infinite dimensional function space.
For reference, basic results concerning Gaussian measures 
required for this paper are gathered in \ref{app:A}.

\begin{thm}
\label{inflinearfilter}
Let $v_0$ be distributed
according to a Gaussian measure 
$\mu_0 = \mathcal{N}\left( m_0,\mathcal{C}_0 \right)$ 
on $L^2(\mathbf{T}^2)$
and let $\lbrace \eta_n \rbrace_{n \in \mathbb{N}}$ 
be i.i.d. draws from the
$L^2(\mathbf{T}^2)$-valued Gaussian measure $\mathcal{N}\left( 0, \Gamma \right)$.  
Assume further that $v_0$ and $\lbrace \eta_n \rbrace_{n \in \mathbb{N}}$ are independent of one
another, and that $\mathcal{C}_0$ and $\Gamma$ are strictly positive.
Then the conditional distribution
$\mathbb{P}(v_n \vert Y_n) \equiv \mu^n$ is a Gaussian
$\mathcal{N}\left( m^n, \mathcal{C}^n \right)$ with mean and
covariance satisfying the recurrence relations
\begin{subequations}
\label{eq:discretefilterx}
\begin{eqnarray}
m^{n+1}  = & e^{-\Delta t \mathcal{L}}m^{n} \nonumber \\
	& -e^{-\Delta t \mathcal{L}}\mathcal{C}^{n}e^{-\Delta t \mathcal{L}^*}
\left( \Gamma + e^{-\Delta t \mathcal{L}}\mathcal{C}^{n}e^{-\Delta t \mathcal{L}^*} \right)^{-1}
\left( e^{-\Delta t \mathcal{L}}m^{n} - y_{n+1} \right),
\label{eq:discretekalmanfilter-a} \\
\mathcal{C}^{n+1}  = & e^{-\Delta t \mathcal{L}}\mathcal{C}^{n}e^{-\Delta t \mathcal{L}^*} \nonumber \\
& -e^{-\Delta t \mathcal{L}}\mathcal{C}^{n}e^{-\Delta t \mathcal{L}^*} \left( \Gamma + e^{-\Delta t \mathcal{L}}\mathcal{C}^{n}e^{-\Delta t \mathcal{L}^*} \right)^{-1} e^{-\Delta t \mathcal{L}}\mathcal{C}^{n}e^{-\Delta t \mathcal{L}^*},
\label{eq:discretekalmanfilter-b}
\end{eqnarray}
\end{subequations}
where $m^0=m_0$, $\mathcal{C}^0 = \mathcal{C}_0$.  
\end{thm}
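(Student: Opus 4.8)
The plan is to proceed by induction on $n$, exploiting the two-step prediction--analysis structure described above; no process noise enters the dynamics, so the prediction step reduces to a pure pushforward, which keeps the independence bookkeeping clean. The base case $n=0$ is immediate, since $\mu^0 = \mathbb{P}(v_0)=\mathcal{N}(m_0,\mathcal{C}_0)$ matches the stated initialization $m^0=m_0$, $\mathcal{C}^0=\mathcal{C}_0$. For the inductive step I would assume $\mathbb{P}(v_n\vert Y_n)=\mathcal{N}(m^n,\mathcal{C}^n)$ and establish the same Gaussian form, with the updated parameters, for $\mathbb{P}(v_{n+1}\vert Y_{n+1})$.

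For the prediction step I would push the measure $\mathcal{N}(m^n,\mathcal{C}^n)$ forward under the bounded linear solution operator $e^{-\Delta t\mathcal{L}}$. Since the image of a Gaussian measure under a bounded linear map is again Gaussian (a standard fact recorded in \ref{app:A}), the predicted distribution $\mathbb{P}(v_{n+1}\vert Y_n)$ equals $\mathcal{N}(\hat m,\hat{\mathcal{C}})$ with $\hat m = e^{-\Delta t\mathcal{L}}m^n$ and $\hat{\mathcal{C}} = e^{-\Delta t\mathcal{L}}\mathcal{C}^n e^{-\Delta t\mathcal{L}^*}$. Because $\mathcal{L}$ is anti-Hermitian, $e^{-\Delta t\mathcal{L}}$ is unitary, so $\hat{\mathcal{C}}$ inherits the strict positivity and trace-class properties of $\mathcal{C}^n$.

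For the analysis step I would regard $(v_{n+1},y_{n+1})$, conditioned on $Y_n$, as a jointly Gaussian pair. Using $y_{n+1}=v_{n+1}+\eta_{n+1}$ together with the independence of $\eta_{n+1}$ from $v_0$ and from $\eta_1,\dots,\eta_n$ (hence from $v_{n+1}$ given $Y_n$, since the dynamics are deterministic), this pair has mean $(\hat m,\hat m)$ and covariance blocks $\mathrm{Cov}(v_{n+1})=\hat{\mathcal{C}}$, $\mathrm{Cov}(y_{n+1})=\hat{\mathcal{C}}+\Gamma$, and cross-covariance $\hat{\mathcal{C}}$. The posterior $\mathbb{P}(v_{n+1}\vert Y_{n+1})=\mathbb{P}(v_{n+1}\vert Y_n,y_{n+1})$ is obtained by conditioning one component of a jointly Gaussian pair on the other, giving a Gaussian with mean $\hat m - \hat{\mathcal{C}}(\Gamma+\hat{\mathcal{C}})^{-1}(\hat m - y_{n+1})$ and covariance $\hat{\mathcal{C}} - \hat{\mathcal{C}}(\Gamma+\hat{\mathcal{C}})^{-1}\hat{\mathcal{C}}$. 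Substituting the expressions for $\hat m$ and $\hat{\mathcal{C}}$ reproduces the claimed recurrences~(\ref{eq:discretekalmanfilter-a})--(\ref{eq:discretekalmanfilter-b}) exactly.

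The main obstacle is not the algebra but making the conditioning rigorous in infinite dimensions. Since $\Gamma$ and $\hat{\mathcal{C}}$ are trace-class they are compact, so $(\Gamma+\hat{\mathcal{C}})^{-1}$ is an unbounded operator, and the Kalman gain $\hat{\mathcal{C}}(\Gamma+\hat{\mathcal{C}})^{-1}$ must be shown to define a bounded operator of the correct range. The strict positivity of $\mathcal{C}_0$ and $\Gamma$ is precisely what guarantees that the operators involved are injective and that the prior $\mathcal{N}(\hat m,\hat{\mathcal{C}})$ and posterior $\mathcal{N}(m^{n+1},\mathcal{C}^{n+1})$ are equivalent as measures, so that the Radon--Nikodym derivative in~(\ref{eq:Bayesian}) is well-defined in the Feldman--H\'ajek sense. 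I would therefore verify boundedness of the gain and the equivalence of the prior and posterior using the Gaussian-measure results collected in \ref{app:A}, after which the conditioning formulae follow as in the finite-dimensional case and the induction closes.
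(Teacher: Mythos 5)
Your proposal is correct and follows essentially the same route as the paper: a prediction step realized as a pushforward of the Gaussian under $e^{-\Delta t\mathcal{L}}$, followed by an analysis step that conditions one component of the jointly Gaussian pair $(v_{n+1},y_{n+1})$ given $Y_n$ on the other, using the Gaussian conditioning lemma collected in the appendix. The additional care you devote to the infinite-dimensional conditioning and the boundedness of the Kalman gain is a reasonable elaboration but does not alter the argument.
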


\begin{proof}
Let
$m_{n|N} = \mathbb{E}(v_{n} \vert Y_N)$ and
$\mathcal{C}_{n|N} = \mathbb{E}\left[ (v_{n} - m_{n|N})(v_{n} - m_{n|N})^* \right]$
denote the mean and covariance operator of $\mathbb{P}(v_{n} \vert Y_N)$
so that $m^n=m_{n|n}$, $\mathcal{C}^n=\mathcal{C}_{n|n}$.
Now the prediction step reads
\begin{eqnarray}
\label{eq:prediction}
\eqalign
m_{n+1|n} &= \mathbb{E}(e^{-\Delta t \mathcal{L}} v_{n} \vert Y_n)
= e^{-\Delta t \mathcal{L}}m_{n|n}, \nonumber\\
\mathcal{C}_{n+1|n} &=
\mathbb{E}\left[ e^{-\Delta t \mathcal{L}} (v_{n} - m_{n|n})(v_{n} - m_{n|n})^*
e^{-\Delta t \mathcal{L}^*} \right]\nonumber\\
& =e^{-\Delta t \mathcal{L}}\mathcal{C}_{n|n}e^{-\Delta t \mathcal{L}^*}.
\end{eqnarray}
To get the analysis step,
choose $x_1=v_{n+1}\vert Y_n$ and $x_2=y_{n+1} \vert Y_n$
then $(x_1,x_2)$ is jointly Gaussian with mean $(m_{n+1|n},m_{n+1|n})$
and each components of the covariance operator for $(x_1,x_2)$ are given by
\begin{eqnarray*}
C_{11} &= \mathbb{E}\left[ (x_1-m_{n+1|n})(x_1-m_{n+1|n})^* \right]= \mathcal{C}_{n+1|n}, \\
C_{22} &= \mathbb{E}\left[ (x_2-m_{n+1|n})(x_2-m_{n+1|n})^* \right]= \Gamma+
\mathcal{C}_{n+1|n}, \\
C_{12} &= \mathbb{E}\left[ (x_1-m_{n+1|n})(x_2-m_{n+1|n})^* \right]= \mathcal{C}_{n+1|n} =C_{21}.
\end{eqnarray*}
Using Lemma~\ref{condgaussian} we obtain
\begin{eqnarray}
\label{eq:analysis}
m_{n+1|n+1} &= m_{n+1|n} - \mathcal{C}_{n+1|n}
\left( \Gamma + \mathcal{C}_{n+1|n} \right)^{-1} \left( m_{n+1|n} - y_{n+1} \right), \nonumber \\
\mathcal{C}_{n+1|n+1} &= \mathcal{C}_{n+1|n} - \mathcal{C}_{n+1|n}
\left( \Gamma + \mathcal{C}_{n+1|n} \right)^{-1} \mathcal{C}_{n+1|n}.
\end{eqnarray}
Combining Equations~(\ref{eq:prediction}) and (\ref{eq:analysis})
yields Equation~(\ref{eq:discretefilterx}).  
\end{proof}

Note that $\mu_n$, the distribution of $v_0|Y_n$, 
is also Gaussian and we denote its mean and covariance by 
$m_n$ and $\mathcal{C}_n$ respectively. 
The measure $\mu_n$ is the image of $\mu^n$ under the 
linear transformation $e^{t_n \mathcal{L}}$.
Thus we have $m_n = e^{t_n \mathcal{L}}m^n$ and
$\mathcal{C}_n=e^{t_n\mathcal{L}}\mathcal{C}^n e^{t_n\mathcal{L}^*}$.

In this paper we study a wave propagation
problem for which $\mathcal{L}$ is anti-Hermitian. Furthermore
we assume that both $\mathcal{C}_0$ and $\Gamma$ commute
with $\mathcal{L}$. Then the formulae (\ref{eq:discretefilterx})
simplify to give
\begin{subequations}
\label{eq:discretefilter}
\begin{eqnarray}
m^{n+1} & =  e^{-\Delta t \mathcal{L}}m^{n}-\mathcal{C}^{n}
\left( \Gamma + \mathcal{C}^{n} \right)^{-1}
\left( e^{-\Delta t \mathcal{L}}m^{n} - y_{n+1} \right),
\label{eq:discretekalmanfilter-a} \\
\mathcal{C}^{n+1} & =  \mathcal{C}^{n}
-\mathcal{C}^{n} \left( \Gamma + \mathcal{C}^{n} \right)^{-1} \mathcal{C}^{n}.
\label{eq:discretekalmanfilter-b}
\end{eqnarray}
\end{subequations}
The following gives explicit expressions for
$\left(m_n,\mathcal{C}_n\right)$ and $\left(m^n,\mathcal{C}^n\right)$,
based on the Equations (\ref{eq:discretefilter}).

\begin{corollary}
\label{generaltermsmoother}
Suppose that $\mathcal{C}_0$ and $\Gamma$ commute with the anti-Hermitian operator $\mathcal{L}$,
then the means and covariance operators of $\mu_n$ and $\mu^n$
are given by
\begin{subequations}
\label{eq:discretekalman}
\begin{eqnarray}
m_{n}
& = \left( nI+ \Gamma\mathcal{C}_{0}^{-1} \right)^{-1}
\left[ \Gamma\mathcal{C}_{0}^{-1} m_{0}
+ \sum_{l=0}^{n-1} e^{t_{l+1} \mathcal{L}}y_{l+1} \right],
\label{eq:discretekalman-a} \\
\mathcal{C}_n
& = \left( n \Gamma^{-1}+\mathcal{C}_{0}^{-1} \right)^{-1},
\label{eq:discretekalman-c} 
\end{eqnarray}
\end{subequations}
and $m^{n} = e^{-t_n \mathcal{L}}m_n$, $\mathcal{C}^n = \mathcal{C}_n$.
\end{corollary}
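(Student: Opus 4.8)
The plan is to solve the two recurrences in (\ref{eq:discretefilter}) by passing to \emph{information form}, in which both updates linearize. Throughout I would use the standing hypotheses as follows: since $\mathcal{L}$ is anti-Hermitian, $e^{-\Delta t\mathcal{L}}$ is unitary with $e^{-\Delta t\mathcal{L}^*}=e^{\Delta t\mathcal{L}}=(e^{-\Delta t\mathcal{L}})^{-1}$, and since $\mathcal{C}_0$ and $\Gamma$ commute with $\mathcal{L}$ they commute with every $e^{t\mathcal{L}}$; all operators appearing below therefore commute and may be manipulated as if scalar. Strict positivity of $\mathcal{C}_0$ and $\Gamma$ guarantees that each inverse encountered exists, a point I would verify inductively alongside the main computation.

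First I would treat the covariance. Factoring the right-hand side of the covariance recurrence in (\ref{eq:discretefilter}) gives $\mathcal{C}^{n+1}=\mathcal{C}^n(\Gamma+\mathcal{C}^n)^{-1}\Gamma$, and inverting this identity yields the telescoping relation $(\mathcal{C}^{n+1})^{-1}=(\mathcal{C}^n)^{-1}+\Gamma^{-1}$. A trivial induction from $\mathcal{C}^0=\mathcal{C}_0$ then produces $(\mathcal{C}^n)^{-1}=\mathcal{C}_0^{-1}+n\Gamma^{-1}$, which is (\ref{eq:discretekalman-c}). The identity $\mathcal{C}^n=\mathcal{C}_n$ follows immediately from $\mathcal{C}_n=e^{t_n\mathcal{L}}\mathcal{C}^n e^{t_n\mathcal{L}^*}$ together with commutativity and the unitarity $e^{t_n\mathcal{L}}e^{t_n\mathcal{L}^*}=I$.

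For the mean I would first convert the filter recurrence in (\ref{eq:discretefilter}) into a recurrence for the smoother mean $m_n=e^{t_n\mathcal{L}}m^n$. Applying $e^{t_{n+1}\mathcal{L}}$, and using $t_{n+1}=t_n+\Delta t$ together with commutativity, gives
\begin{eqnarray*}
m_{n+1}=m_n-\mathcal{C}^n(\Gamma+\mathcal{C}^n)^{-1}\left(m_n-e^{t_{n+1}\mathcal{L}}y_{n+1}\right).
\end{eqnarray*}
The key step is then to pass to the information vector $z_n:=(\mathcal{C}^n)^{-1}m_n$. Using the two covariance identities already established, namely $I-\mathcal{C}^n(\Gamma+\mathcal{C}^n)^{-1}=\Gamma(\Gamma+\mathcal{C}^n)^{-1}$ and $(\mathcal{C}^{n+1})^{-1}=\Gamma^{-1}(\Gamma+\mathcal{C}^n)(\mathcal{C}^n)^{-1}$, a short commutative computation collapses the prefactors and produces the purely additive update $z_{n+1}=z_n+\Gamma^{-1}e^{t_{n+1}\mathcal{L}}y_{n+1}$. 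Induction from $z_0=\mathcal{C}_0^{-1}m_0$ then gives $(\mathcal{C}^n)^{-1}m_n=\mathcal{C}_0^{-1}m_0+\Gamma^{-1}\sum_{l=0}^{n-1}e^{t_{l+1}\mathcal{L}}y_{l+1}$.

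To finish I would multiply through by $\mathcal{C}^n=(n\Gamma^{-1}+\mathcal{C}_0^{-1})^{-1}$ and factor a $\Gamma^{-1}$ out of the bracket, using $(n\Gamma^{-1}+\mathcal{C}_0^{-1})^{-1}\Gamma^{-1}=(nI+\Gamma\mathcal{C}_0^{-1})^{-1}$ together with $\mathcal{C}_0^{-1}m_0=\Gamma^{-1}\Gamma\mathcal{C}_0^{-1}m_0$, which rearranges exactly into (\ref{eq:discretekalman-a}); the remaining claim $m^n=e^{-t_n\mathcal{L}}m_n$ is just the defining relation between $\mu_n$ and $\mu^n$ recorded before the statement. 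I expect the only real obstacle to be bookkeeping, namely getting the commuting-operator algebra in the $z_n$ step to cancel cleanly, rather than any conceptual difficulty: the whole point of the information-form substitution is precisely that it turns both nonlinear Riccati-type recurrences into constant-coefficient additive ones.
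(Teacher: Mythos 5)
Your proposal is correct and is essentially the paper's own argument: the covariance step via $(\mathcal{C}^{n+1})^{-1}=(\mathcal{C}^n)^{-1}+\Gamma^{-1}$ is identical, and your information-form recursion $z_{n+1}=z_n+\Gamma^{-1}e^{t_{n+1}\mathcal{L}}y_{n+1}$ with $z_n=(\mathcal{C}^n)^{-1}m_n$ is exactly the paper's telescoping identity $\left(\left(n+1\right)I+\Gamma\mathcal{C}_0^{-1}\right)m_{n+1}=\left(nI+\Gamma\mathcal{C}_0^{-1}\right)m_n+e^{t_{n+1}\mathcal{L}}y_{n+1}$ rescaled by $\Gamma$.
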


\begin{proof}
Assume for induction that $\mathcal{C}^n$ is invertible.
Then the identity
\begin{eqnarray*}
& \left( \Gamma^{-1}+ \left(\mathcal{C}^n\right)^{-1} \right)
\left(\mathcal{C}^n-\mathcal{C}^n\left(\Gamma+\mathcal{C}^n\right)^{-1}\mathcal{C}^n\right) \\
&=
\left( \Gamma^{-1}+ \left(\mathcal{C}^n\right)^{-1} \right)
\left(\mathcal{C}^n-\mathcal{C}^n
\left[
\left(\mathcal{C}^n\right)^{-1} \left(\Gamma^{-1}+\left(\mathcal{C}^n\right)^{-1} \right)^{-1}
\Gamma^{-1} \right]
\mathcal{C}^n\right)
=I
\end{eqnarray*}
leads to $(\mathcal{C}^{n+1})^{-1} =  \Gamma^{-1}+ (\mathcal{C}^n)^{-1}$
from Equation~(\ref{eq:discretekalmanfilter-b}), and hence $\mathcal{C}^{n+1}$ is invertible.
Then Equation~(\ref{eq:discretekalman-c}) follows by induction.
By applying $e^{t_{n+1}\mathcal{L}}$ to Equation~(\ref{eq:discretekalmanfilter-a})
we have
\begin{eqnarray*}
m_{n+1} = m_{n}-\mathcal{C}^{n}\left( \Gamma+\mathcal{C}^{n} \right)^{-1}
\left( m_{n}
-e^{t_{n+1} \mathcal{L}}y_{n+1}
\right).
\end{eqnarray*}
After using
$\mathcal{C}^{n}(\Gamma+\mathcal{C}^{n})^{-1}
=\left( (n+1)I+\Gamma \mathcal{C}_{0}^{-1}\right)^{-1}$
from Equation~(\ref{eq:discretekalman-c}),
we obtain
the telescoping series
\begin{eqnarray*}
\left( \left(n+1\right)I+\Gamma \mathcal{C}_0^{-1} \right) m_{n+1}
=\left( nI+\Gamma \mathcal{C}_0^{-1} \right) m_{n} + e^{t_{n+1}\mathcal{L}}y_{n+1}
\end{eqnarray*}
and 
Equation~(\ref{eq:discretekalman-a}) follows.  
The final observations follow since
$m_n = e^{t_n \mathcal{L}}m^n$ and
$\mathcal{C}_n=e^{t_n\mathcal{L}}\mathcal{C}^n e^{t_n\mathcal{L}^*}=\mathcal{C}^n$.
\end{proof}

\subsection{Equivalence of measures}
Now suppose we have a set of specific data and they are a single realization of
the observation process~(\ref{eq:discreteobservation}),
\begin{eqnarray}
\label{eq:singlediscreteobservation}
y_n(\omega)=e^{-t_n\mathcal{L}}u+\eta_n(\omega).
\end{eqnarray}
Here $\omega$ is an element of the probability space $\Omega$ 
generating the entire noise signal
$\lbrace \eta_n \rbrace_{n \in \mathbb{N}}.$ 
We assume that the initial condition $u \in L^2(\mathbf{T}^2)$ for the true signal
$e^{-t_n\mathcal{L}}u$
is non-random and hence independent of $\mu_0$.
We insert the fixed (non-random) instances of the data~(\ref{eq:singlediscreteobservation})
into the formulae for $\mu_0$, $\mu_n$ and $\mu^n$,
and will prove all three measures are equivalent.
Recall that two measures are said to be equivalent
if they are mutually absolutely continuous,
and singular if they are concentrated on disjoint sets~\cite{Prato92}.  
Lemma~\ref{FH} (Feldman-Hajek) tells us
the conditions under which two Gaussian measures are equivalent.

Before stating the theorem, we need to introduce some notation
and assumptions.  
Let $\phi_k(x) = e^{2\pi ik \cdot x}$,
where $k = (k_1,k_2) \in \mathbb{Z}\times\mathbb{Z} \equiv \mathbb{K}$,
be a standard orthonormal basis for $L^2(\mathbf{T}^2)$
with respect to the standard inner product
$(f,g) = \int_{\mathbf{T}^2} f\bar{g}\,dxdy$
where the upper bar denotes the complex conjugate.

\begin{assumptions}
\label{asp:1}
The operators $\mathcal{L}, \mathcal{C}_0$ and $\Gamma$ 
are diagonalizable in the basis defined by the $\phi_k.$ 
The two covariance operators, $\mathcal{C}_0$ and $\Gamma$,
have positive eigenvalues $\lambda_k>0$ and $\gamma_k>0$ 
respectively: 
\begin{eqnarray}
\label{eq:eigen}
\mathcal{C}_0\phi_k & = \lambda_k \phi_k, \nonumber\\
\Gamma\phi_k & = \gamma_k \phi_k.
\end{eqnarray}
The eigenvalues $\ell_k$ of $\mathcal{L}$ have zero real
parts and $\mathcal{L}\phi_0=0.$
\end{assumptions}

This assumption on the simultaneous diagonalization of the operators implies
the commutativity of $\mathcal{C}_0$ and $\Gamma$ with $\mathcal{L}$.
Therefore we have Corollary~\ref{generaltermsmoother} and
Equations~(\ref{eq:discretekalman}) can be used to study the large $n$ behaviour of the $\mu_n$ and $\mu^n$.
We believe that it might be possible to 
obtain the subsequent results without Assumption~\ref{asp:1},
but to do so would require significantly different
techniques of analysis; the simultaneously diagonalizable
case allows a straightforward analysis in which the
mechanisms giving rise to the results are easily understood.

Note that, since $\mathcal{C}_0$ and $\Gamma$ are
the covariance operators of Gaussian measures on
$L^2(\mathbf{T}^2),$ it follows from Lemma~\ref{gaussiancovop}
that the $\lambda_k$, $\gamma_k$ are summable:
i.e., $\sum_{k\in \mathbb{K}}\lambda_k < \infty$,
$\sum_{k\in \mathbb{K}}\gamma_k < \infty$.
We define $H^s(\mathbf{T}^2)$ to be the Sobolev space
of periodic functions with $s$ weak derivatives and 
$$\left\| \cdot \right\|_{H^s(\mathbf{T}^2)} \equiv \sum_{k \in \mathbb{K}^+}
|k|^{2s}\lvert (\cdot, \phi_k) \rvert^2+ \lvert (\cdot, \phi_0) \rvert^2$$
where
$\mathbb{K}^+ = \mathbb{K} \backslash \lbrace (0,0) \rbrace$
noting that this norm reduces to the
usual $L^2$ norm when $s=0$. We denote by 
$\left\| \cdot \right\|$ the standard Euclidean norm.

\begin{thm}
\label{FH2}
If $\sum_{k \in \mathbb{K}} \lambda_k / \gamma_k^2 < \infty$,
then
the Gaussian measures $\mu_0$, $\mu_n$ and $\mu^n$ on the Hilbert space
$L^2(\mathbf{T}^2)$ are equivalent $\eta-$a.s.  
\end{thm}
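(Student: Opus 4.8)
The plan is to invoke the Feldman--Hajek criterion (Lemma~\ref{FH}): two Gaussian measures $\mathcal{N}(m_1,\mathcal{Q}_1)$ and $\mathcal{N}(m_2,\mathcal{Q}_2)$ on $L^2(\mathbf{T}^2)$ are equivalent precisely when (a) their Cameron--Martin spaces $\mathcal{Q}_1^{1/2}L^2=\mathcal{Q}_2^{1/2}L^2$ coincide, (b) the operator $\mathcal{Q}_1^{-1/2}\mathcal{Q}_2\mathcal{Q}_1^{-1/2}-I$ is Hilbert--Schmidt, and (c) $m_1-m_2$ lies in that common Cameron--Martin space. Since equivalence of measures is transitive, it suffices to verify these conditions for the pair $(\mu_0,\mu_n)$ and for the pair $(\mu_0,\mu^n)$. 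By Assumption~\ref{asp:1} all operators are simultaneously diagonalised by the Fourier basis $\{\phi_k\}$, so every condition decouples into a scalar statement about eigenvalues. Writing $\hat f_k=(f,\phi_k)$ and recording from Corollary~\ref{generaltermsmoother} that $\mathcal{C}_n$ has eigenvalues $\lambda_k\gamma_k/(n\lambda_k+\gamma_k)$ and that $\mathcal{C}^n=\mathcal{C}_n$, the covariance parts (a) and (b) are identical for both pairs and need only be checked once.

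First I would dispose of (a) and (b). The relevant eigenvalue ratio is $r_k=\gamma_k/(n\lambda_k+\gamma_k)$; since $\lambda_k/\gamma_k\to0$ (a consequence of $\sum_k\lambda_k/\gamma_k^2<\infty$ together with $\gamma_k\to0$), we have $r_k\to1$, so the two Cameron--Martin spaces coincide, giving (a). For (b) the diagonal entries of the candidate operator are $r_k-1=-n\lambda_k/(n\lambda_k+\gamma_k)$, and $\sum_k(r_k-1)^2\le n^2\sum_k\lambda_k^2/\gamma_k^2$, which is finite because $\lambda_k$ is bounded and $\sum_k\lambda_k/\gamma_k^2<\infty$; this is exactly where the hypothesis is used.

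The substance of the theorem is condition (c), and here the randomness enters. I would substitute the explicit formula for $m_n$ from Corollary~\ref{generaltermsmoother} together with $y_{l+1}=e^{-t_{l+1}\mathcal{L}}u+\eta_{l+1}$, so that the Fourier coefficients of $m_0-m_n$ split into a deterministic part proportional to $(\hat m_{0,k}-\hat u_k)$ and a noise part proportional to $\lambda_k\sum_{l}e^{t_{l+1}\ell_k}\hat\eta_{l+1,k}$. The deterministic part is handled by a direct bound: its squared Cameron--Martin norm is dominated by $n^2\sum_k(\lambda_k/\gamma_k^2)\,|\hat m_{0,k}-\hat u_k|^2$, finite since $m_0-u\in L^2$ and $\sum_k\lambda_k/\gamma_k^2<\infty$. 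For the stochastic part I would instead estimate the expected squared Cameron--Martin norm: using independence of the $\eta_{l+1}$, the identity $\mathbb{E}|\hat\eta_{l+1,k}|^2=\gamma_k$, and $|e^{t\ell_k}|=1$ (as $\ell_k\in i\mathbb{R}$), the expectation collapses to $\sum_k n\lambda_k\gamma_k/(n\lambda_k+\gamma_k)^2\le n\sum_k\lambda_k/\gamma_k<\infty$; a finite expectation forces the norm to be finite almost surely, which is the meaning of the ``$\eta$-a.s.'' in the statement. The pair $(\mu_0,\mu^n)$ is treated the same way after inserting $m^n=e^{-t_n\mathcal{L}}m_n$, the only new feature being the phase factors $e^{-t_n\ell_k}$ carried in by the unitary propagator.

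I expect condition (c) for $\mu^n$ to be the main obstacle. Whereas for $(\mu_0,\mu_n)$ the bare $\hat m_{0,k}$ contributions cancel and only the difference $\hat m_{0,k}-\hat u_k$ survives, the propagator $e^{-t_n\mathcal{L}}$ breaks this cancellation and reintroduces terms weighted by $|1-e^{-t_n\ell_k}|^2$; controlling these, rather than the Hilbert--Schmidt bookkeeping, is the delicate step, and it is here that the regularity of $m_0$ and $u$ relative to the Cameron--Martin scale, together with the phase structure of $\mathcal{L}$, must be exploited. Once (c) is secured for both pairs, transitivity of equivalence yields that $\mu_0$, $\mu_n$ and $\mu^n$ are mutually equivalent.
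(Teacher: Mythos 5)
Your treatment of the pair $(\mu_0,\mu_n)$ is correct and follows essentially the paper's route: Feldman--Hajek plus simultaneous diagonalization, with the Cameron--Martin and Hilbert--Schmidt conditions reduced to the two-sided bound on $\gamma_k/(n\lambda_k+\gamma_k)$ and the square-summability of $n\lambda_k/\gamma_k$ (equivalently of $n\lambda_k/(n\lambda_k+\gamma_k)$), both consequences of $\sum_k\lambda_k/\gamma_k^2<\infty$ together with the trace-class property of $\mathcal{C}_0$ and $\Gamma$. The one methodological difference is in the mean condition: the paper bounds the random numerator uniformly in $k$ by an a.s.\ finite constant $C(n)$ and pulls out $\sum_k\lambda_k/\gamma_k^2$, whereas you bound the \emph{expected} Cameron--Martin norm of the noise contribution by $n\sum_k\lambda_k/\gamma_k<\infty$ and conclude a.s.\ finiteness. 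Both work; yours is arguably cleaner, since it avoids justifying the a.s.\ finiteness of a supremum over $k$ of random quantities.

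The gap is that you never finish the pair $(\mu_0,\mu^n)$: you correctly isolate $\sum_k\lambda_k^{-1}\lvert 1-e^{-t_n\ell_k}\rvert^2\lvert(m_0,\phi_k)\rvert^2$ as the delicate term and then stop, saying the phase structure ``must be exploited'' without doing so. A proof has to actually control this. Note first that your parallel worry about $u$ is unfounded: the $u$-contribution to $(m^n-m_0,\phi_k)$ carries the factor $n/(n+\gamma_k/\lambda_k)\le n\lambda_k/\gamma_k$ regardless of the phases $e^{-t_n\ell_k}$, so it is absorbed exactly as in the $\mu_n$ case; it is only the $m_0$-term whose coefficient ceases to be $O(\lambda_k/\gamma_k)$. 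The paper dispatches the whole case in one line, by observing that $m^n=e^{-t_n\mathcal{L}}m_n$ with $e^{-t_n\mathcal{L}}$ unitary and commuting with all the covariances, and $\mathcal{C}^n=\mathcal{C}_n$ --- i.e.\ $\mu^n$ is the pushforward of $\mu_n$ under a covariance-preserving unitary. If you take that route, you should still note that the pushforward of $\mu_0$ under $e^{-t_n\mathcal{L}}$ is $\mathcal{N}(e^{-t_n\mathcal{L}}m_0,\mathcal{C}_0)$ rather than $\mu_0$ itself, so the cross-term you identified, namely $(e^{-t_n\mathcal{L}}-I)m_0$ measured in the Cameron--Martin norm of $\mathcal{C}_0$, is precisely what remains to be checked; it is trivial when $m_0=0$ and holds whenever $m_0\in\mathrm{Im}(\mathcal{C}_0^{1/2})$. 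Your instinct about where the difficulty sits is sound, but as written the proposal leaves the theorem unproved for $\mu^n$.
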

\begin{proof}
We first show the equivalence between $\mu_0$ and $\mu_n$.
For $h = \sum_{k \in \mathbb{K}} h_k \phi_k$
we get, from Equations~(\ref{eq:discretekalman-c}) and (\ref{eq:eigen}),
\begin{eqnarray*}
\frac{1}{c^+}
\leq \frac{\left( h, \mathcal{C}_n h\right)}{\left( h, \mathcal{C}_0 h\right)}
= \frac{\sum_{k \in \mathbb{K}}
\left( n\gamma_k^{-1}+\lambda_k^{-1}\right)^{-1}h_k^2}
{\sum_{k\in \mathbb{K}} \lambda_k h_k^2}
\leq 1,
\end{eqnarray*}
where $c^+=\sup_{k \in \mathbb{K}} \left( n\lambda_k/\gamma_k+1 \right)$.
We have $c^+ \in [1, \infty)$
because $\sum_{k \in \mathbb{K}} \lambda_k/\gamma_k^2 < \infty$
and $\Gamma$ is trace-class.
Then the first condition for Feldman-Hajek is satisfied by Lemma~\ref{FH1}.

For the second condition,
take
$\{ \mathbf{g}_k^{l+1} \}_{k \in \mathbb{K}}$
where
$l=0,\ldots,n-1$,
to be a sequence of
complex-valued
unit Gaussians
independent except for the condition 
$\mathbf{g}_{-k}^l=\bar{\mathbf{g}}_{k}^l$.
This constraint ensures that 
the Karhunen-Lo\`{e}ve expansion 
\begin{eqnarray*}
e^{t_{l+1} \mathcal{L}}\eta_{l+1}
= \sum_{k \in \mathbb{K}} \sqrt{\gamma_k}\mathbf{g}_k^{l+1} \phi_k,
\end{eqnarray*}
is distributed according to the
real-valued Gaussian measure $\mathcal{N}\left(0,\Gamma\right)$,
and is independent for different values of $l$. 
Thus
\begin{eqnarray*}
\fl \left\| m_n - m_0 \right\|^2_{\mathcal{C}_0}
& \equiv \left\| \mathcal{C}_0^{-\frac{1}{2}} (m_n-m_0) \right\|^2_{L^2(\mathbf{T}^2)}\\
& = \sum_{k \in \mathbb{K}}
\left( \frac{\lambda_k^{-{1}/{2}}}{n+\gamma_k/\lambda_k} \right )^2
\left\lvert
-n (m_0,\phi_k)+ n (u,\phi_k) 
 +\sqrt{\gamma_k}
\sum_{l=0}^{n-1}\mathbf{g}_k^{l+1}
\right\rvert^2 \\
& \leq C(n)\sum_{k \in \mathbb{K}} \frac{\lambda_k}{\gamma_k^2} < \infty,
\end{eqnarray*}
where
$$C(n) \equiv \sup_{k\in \mathbb{K}} \left\lvert -n (m_0,\phi_k)+ n (u,\phi_k)
 +\sqrt{\gamma_k}
\sum_{l=0}^{n-1}\mathbf{g}_k^{l+1} \right\rvert ^2<\infty$$
$\eta-$a.s.
from the strong law of large numbers~\cite{Varadhan}.

For the third condition, we need to show
the set of eigenvalues of $T$, where
\begin{eqnarray*}
T\phi_k = \left( \mathcal{C}_n^{-\frac{1}{2}}\mathcal{C}_0
\mathcal{C}_n^{-\frac{1}{2}}-I \right) \phi_k
=n \left( \frac{\lambda_k}{\gamma_k} \right) \phi_k,
\end{eqnarray*}
are square-summable.
This is satisfied since $\mathcal{C}_0$ is trace-class:
\begin{eqnarray*}
\sum_{k \in \mathbb{K}} \frac{\lambda_k^2}{\gamma_k^2}
\leq \left( \sup_{k \in \mathbb{K}} \lambda_k\right)
\sum_{k \in \mathbb{K}} \frac{\lambda_k}{\gamma_k^2} < \infty.
\end{eqnarray*}

The equivalence between $\mu_0$ and $\mu^n$
is immediate because $m^n$ is the image of $m_n$ under a unitary map
$e^{-t_n\mathcal{L}}$, and $\mathcal{C}^n = \mathcal{C}_n$.
\end{proof}

To illustrate the conditions of the theorem,
let $(-\triangle)$ denote the negative Laplacian with domain
$\mathcal{D}(-\triangle)= H^2(\mathbf{T}^2)$.
Assume that 
$\mathcal{C}_0 \propto \left(-\triangle +k_A I \right)^{-A}$ and
$\Gamma \propto \left(-\triangle +k_B I \right)^{-B}$,
where the conditions $A>1$, $B>1$
and $k_A, k_B > 0$ ensure, respectively, that the
two operators are trace-class and positive-definite.
Then the condition
$\sum_{k \in \mathbb{K}} \lambda_k / \gamma_k^2 < \infty$
reduces to
$2{B}\leq {A}-1.$

\section{Measure Consistency and Time-Independent Model Error}
\label{MCTI}
In this section we study the
large data
limit of the smoother $\mu_n$ and the filter
$\mu^n$ in Corollary~\ref{generaltermsmoother}
for large $n$.
We study measure consistency, namely the question of whether the filtering or smoothing
distribution converges to a Dirac measure on the true signal as $n$ increases. We study situations
where the data is generated as a single realization of the statistical model itself, so there is no model
error, and situations where model error is present. 

\subsection{Data model}
Suppose that
the true signal, denoted $v_n'=v'(\cdot,t_n)$, can be different from the solution $v_n$
computed via the model~(\ref{eq:generalmodel}) and instead solves
\begin{eqnarray}
\label{eq:generaltruthmodel}
\partial_t v'+\mathcal{L}' v' =0,
\quad
v'(0)=u
\end{eqnarray}
for another anti-Hermitian operator $\mathcal{L}'$ on $L^2(\mathbf{T}^2)$
and fixed $u \in L^2(\mathbf{T}^2)$.
We further assume possible error in the observational noise model 
so that the actual noise in the data
is not $\eta_n$ but $\eta_n'$.
Then, instead of $y_n$ given by (\ref{eq:singlediscreteobservation}),
what we actually incorporate into the filter is the true data,
a single realization $y_n'$ determined by $v_n'$ and $\eta_n'$ as follows:
\begin{eqnarray}
\label{eq:discretetruthobservation}
y'_n = v'_n+\eta_n' = e^{-t_n \mathcal{L}'}u+\eta_n'.
\end{eqnarray}
We again use $e^{-t\mathcal{L}'}$ to denote the forward solution operator, now for
(\ref{eq:generaltruthmodel}),
through $t$ time units.
Note that each realization (\ref{eq:discretetruthobservation})
is an element 
in the probability space $\Omega'$, which is independent of $\Omega$.
For the data $Y_n' = \{ y_1',\ldots,y_n' \}$, let $\mu_n'$ be the measure
$\mathbb{P}\left(v_0|Y_n=Y_n'\right)$. This measure is
Gaussian and is determined by the mean in Equation~(\ref{eq:discretekalman-a}) with $y_l$
replaced by $y_l'$, which we relabel as $m_n'$,
and the covariance operator in Equation~(\ref{eq:discretekalman-c})
which does not depend on the data so we retain
the notation $\mathcal{C}_n$.
Clearly, using (\ref{eq:discretetruthobservation}) we obtain
\begin{eqnarray}
\label{eq:posmean}
m_{n}'
= \left( nI+ \Gamma\mathcal{C}_{0}^{-1} \right)^{-1}
\left[ \Gamma\mathcal{C}_{0}^{-1} m_{0}
+ \sum_{l=0}^{n-1}\left( e^{t_{l+1} \left(\mathcal{L}-\mathcal{L}' \right)} u
+ e^{ t_{l+1} \mathcal{L}}\eta_{l+1}'
\right) \right],
\end{eqnarray}
where $m_0'=m_0$.

This conditioned mean
$m_n'$ differs from $m_n$
in that $e^{t_{l+1} \left(\mathcal{L}-\mathcal{L}' \right)} u$ and $\eta_{l+1}'$
appear instead of $u$ and $\eta_{l+1}$, respectively.
The reader will readily generalize both the
statement and proof of Theorem~\ref{FH2} to show
the equivalence of $\mu_0$, $\mu_n'$ and
$(\mu')^n \equiv \mathbb{P}(v_n|Y_n=Y_n')=\mathcal{N}\left( (m')^n, \mathcal{C}^n \right),$
showing
the well-definedness of these conditional distributions even
with errors in both forward model and observational noise.
We now study the large data limit for the filtering problem, in the idealized
scenario where $\mathcal{L}=\mathcal{L}'$ (Theorem~\ref{limitthm1})
and the more realistic scenario with model error so that $\mathcal{L}
\neq \mathcal{L}'$ (Theorem~\ref{limitthm2}). We allow possible 
model error in the observation noise for both theorems, 
so that the noises $\eta_n'$ are
draws from i.i.d. Gaussians $\eta_n' \sim
\mathcal{N}(0,\Gamma')$ and $\Gamma'\phi_k=\gamma_k' \phi_k$.
Even though $\gamma_k'$ (equivalently $\Gamma'$) and $\mathcal{L}'$ are not exactly known in most practical
situations, their asymptotics can be predicted
within a certain degree of accuracy.
Therefore, without limiting the applicability of the theory,
the following are assumed in all subsequent theorems and corollaries in this paper:

\begin{assumptions}
\label{ass1}
There are positive real numbers $s, \kappa \in \mathbb{R}^+$ such that:
\begin{enumerate}
\item $\sum_{k \in \mathbb{K}} |k|^{2s}\gamma_k < \infty$,
$\sum_{k \in \mathbb{K}} |k|^{2s}\gamma_k' < \infty$;
\item ${\gamma_k}/{\lambda_k} = O\left(|k|^\kappa \right)$;
\item $m_0, u \in H^{s+\kappa}\left(\mathbf{T}^2 \right)$.
\end{enumerate}
\end{assumptions}
Then Assumptions~\ref{ass1}(1)
imply that $\eta \sim \mathcal{N}(0,\Gamma)$ and $\eta' \sim \mathcal{N}(0,\Gamma')$
are in $H^s(\mathbf{T}^2)$ since
${\mathbb E}\|\eta\|_{H^s(\mathbf{T}^2)}^2<\infty$ and
${\mathbb E}\|\eta'\|_{H^s(\mathbf{T}^2)}^2<\infty.$

\subsection{Limit of the conditioned measure without model error}
We first study the large data limit of the measure $\mu_n'$ without model error.

\begin{thm}
\label{limitthm1}
For the statistical model~(\ref{eq:generalmodel}) and (\ref{eq:discreteobservation}),
suppose that the data $Y_n'=\lbrace y_1',\cdots,y_n' \rbrace$ is created from
(\ref{eq:discretetruthobservation}) with $\mathcal{L}=\mathcal{L}'$.
Then, as $n \to \infty$,
$\mathbb{E}(v_0|Y_n')=m_n' \to u$ in the sense that
\begin{subequations}
\label{eq:discov}
\begin{eqnarray}
&\left\| m_n'-u \right\|_{L^2(\Omega'; H^s(\mathbf{T}^2))} = O\left(n^{-\frac{1}{2}}\right),
\label{eq:discov-a} \\
&\left\| m_n'-u \right\|_{H^s(\mathbf{T}^2)} = o\left(n^{-\theta}\right)
\quad \Omega'-a.s.,
\label{eq:discov-b}
\end{eqnarray}
\end{subequations}
for the probability space $\Omega'$ generating the true observation noise $\lbrace \eta_n'
\rbrace_{n \in \mathbb{N}}$,
and for any non-negative $\theta<1/2$.
Furthermore, $\mathcal{C}_n \to 0$ in the sense that its operator norm from
$L^2(\mathbf{T}^2)$ to $H^s(\mathbf{T}^2)$ satisfies 
\begin{eqnarray}
\label{eq:discovop}
\left\| \mathcal{C}_n \right\|_{\mathbf{L}(L^2(\mathbf{T}^2);H^s(\mathbf{T}^2))}=O(n^{-1}).
\end{eqnarray}
\end{thm}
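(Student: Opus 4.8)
The plan is to work entirely in the Fourier basis $\{\phi_k\}$, in which every operator is diagonal by Assumption~\ref{asp:1}, and to read off the componentwise behaviour of $m_n'$ from the explicit formula (\ref{eq:posmean}). Setting $\mathcal{L}=\mathcal{L}'$ replaces $e^{t_{l+1}(\mathcal{L}-\mathcal{L}')}u$ by $u$, so that, writing $r_k:=\gamma_k/\lambda_k$ and letting $\ell_k$ be the (purely imaginary) eigenvalue of $\mathcal{L}$,
\[
(m_n'-u,\phi_k) = \frac{1}{n+r_k}\Bigl[\,r_k\,(m_0-u,\phi_k) + \sum_{l=0}^{n-1} e^{t_{l+1}\ell_k}(\eta_{l+1}',\phi_k)\Bigr].
\]
The first summand is deterministic; the second, which I denote $R_{n,k}$, is a sum of independent mean-zero Gaussians. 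The key structural fact is that $\mathcal{L}$ is anti-Hermitian, so $|e^{t_{l+1}\ell_k}|=1$ and hence $\mathbb{E}|R_{n,k}|^2 = n\gamma_k'$.

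For the mean-square bound (\ref{eq:discov-a}) I would take $\mathbb{E}$ over $\Omega'$ and sum the $H^s$-weighted squares. Independence kills the cross-terms, giving $\mathbb{E}|(m_n'-u,\phi_k)|^2 = (n+r_k)^{-2}\bigl[r_k^2|(m_0-u,\phi_k)|^2 + n\gamma_k'\bigr]$. The deterministic contribution is handled by the elementary inequality $r_k^2/(n+r_k)^2 \le r_k/(4n)$ (AM--GM, since $(n-r_k)^2\ge0$), which reduces the weighted sum to $\tfrac{1}{4n}\sum_k |k|^{2s} r_k |(m_0-u,\phi_k)|^2$; because $r_k=O(|k|^\kappa)$ by Assumption~\ref{ass1}(2) and $m_0-u\in H^{s+\kappa}(\mathbf{T}^2)$ by Assumption~\ref{ass1}(3), this is finite and $O(n^{-1})$. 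The noise contribution is bounded by $n^{-1}\sum_k |k|^{2s}\gamma_k'$, finite by Assumption~\ref{ass1}(1). Thus $\|m_n'-u\|_{L^2(\Omega';H^s(\mathbf{T}^2))}^2=O(n^{-1})$, i.e. (\ref{eq:discov-a}).

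The almost-sure statement (\ref{eq:discov-b}) is the delicate part, since it requires controlling the strong law of large numbers \emph{uniformly across all Fourier modes} in the $H^s$ norm, not merely mode by mode. My plan is to package the random part as the $H^s(\mathbf{T}^2)$-valued random walk $W_n := \sum_{l=0}^{n-1} e^{t_{l+1}\mathcal{L}}\eta_{l+1}'$, whose increments are i.i.d.\ $\mathcal{N}(0,\Gamma')$ (the unitary $e^{t_{l+1}\mathcal{L}}$ preserves this Gaussian, exactly as in the Karhunen--Lo\`eve computation in the proof of Theorem~\ref{FH2}) and lie in $H^s(\mathbf{T}^2)$ by Assumption~\ref{ass1}(1), with $(W_n,\phi_k)=R_{n,k}$ and $\mathbb{E}\|W_n\|_{H^s(\mathbf{T}^2)}^2 = Cn$ where $C=\sum_k|k|^{2s}\gamma_k'$. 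Since $W_n$ is a martingale, $\|W_n\|_{H^s(\mathbf{T}^2)}$ is a submartingale, so Doob's maximal inequality gives $\mathbb{E}\bigl[\max_{n<2^{j+1}}\|W_n\|_{H^s(\mathbf{T}^2)}^2\bigr]\le 4C\,2^{j+1}$. Applying Markov's inequality on the dyadic block $2^j\le n<2^{j+1}$ at threshold $\varepsilon\,(2^j)^{2-2\theta}$ produces a probability summable in $j$ precisely because $2\theta-1<0$; Borel--Cantelli then forces $\|W_n\|_{H^s(\mathbf{T}^2)}=o(n^{1-\theta})$ a.s. Dividing by $n+r_k\ge n$ and adding the deterministic part (which is $O(n^{-1/2})=o(n^{-\theta})$ since $\theta<1/2$) yields (\ref{eq:discov-b}). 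This uniform-in-$k$ almost-sure control is the main obstacle; everything else rests on the summability built into Assumptions~\ref{ass1}.

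Finally, the covariance bound (\ref{eq:discovop}) is immediate: $\mathcal{C}_n$ is diagonal with eigenvalue $(n\gamma_k^{-1}+\lambda_k^{-1})^{-1}\le \gamma_k/n$, and the norm in $\mathbf{L}(L^2(\mathbf{T}^2);H^s(\mathbf{T}^2))$ of a diagonal operator equals the supremum over $k$ of $|k|^s$ times its eigenvalue (with the $k=0$ weight equal to $1$), hence is at most $n^{-1}\sup_k |k|^s\gamma_k$. Since $\sum_k|k|^{2s}\gamma_k<\infty$ and $|k|^s\le|k|^{2s}$ for $|k|\ge1$, the supremum is finite, giving $\|\mathcal{C}_n\|_{\mathbf{L}(L^2(\mathbf{T}^2);H^s(\mathbf{T}^2))}=O(n^{-1})$.
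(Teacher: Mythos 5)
Your argument is correct, and its overall skeleton --- diagonalization in the Fourier basis, the split of $(m_n'-u,\phi_k)$ into a deterministic term weighted by $\gamma_k/\lambda_k$ and a sum of independent, unit-modulus-rotated noise contributions, and control of the $H^s$-weighted sums through Assumptions~\ref{ass1} --- coincides with the paper's. Where you genuinely diverge is the almost-sure bound (\ref{eq:discov-b}): the paper works directly with the scalar random variable $\| m_n'-u\|_{H^s(\mathbf{T}^2)}$, uses the Gaussianity of $m_n'-u$ together with Lemma~\ref{gaussianmoment} to bound its $2r$-th moment by the $r$-th power of its second moment, and then applies Markov's inequality and Borel--Cantelli over all $n$ with $r(1-2\theta)>1$; you instead observe that $W_n=\sum_{l=0}^{n-1}e^{t_{l+1}\mathcal{L}}\eta'_{l+1}$ is an $H^s(\mathbf{T}^2)$-valued martingale, apply Doob's $L^2$ maximal inequality on dyadic blocks to obtain $\|W_n\|_{H^s(\mathbf{T}^2)}=o(n^{1-\theta})$ a.s., and divide by $n+\gamma_k/\lambda_k\ge n$. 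Your route uses only second moments and no Gaussian-specific input beyond the covariance, so it would survive for non-Gaussian i.i.d.\ noise with the same covariance structure; on the other hand it leans on the martingale structure of the noise sum, whereas the paper's moment argument applies to any Gaussian quantity and is the template it reuses for the subsequent model-error theorems, where the extra term coming from $\mathcal{L}\neq\mathcal{L}'$ is not a martingale. Two further, harmless differences: your AM--GM estimate $(\gamma_k/\lambda_k)^2/(n+\gamma_k/\lambda_k)^2\le (\gamma_k/\lambda_k)/(4n)$ yields the deterministic contribution to the squared error as $O(n^{-1})$ where the paper's cruder bound gives $O(n^{-2})$ (both suffice and both are covered by $m_0,u\in H^{s+\kappa}(\mathbf{T}^2)$), and for (\ref{eq:discovop}) you evaluate the operator norm of the diagonal operator exactly as a supremum, while the paper majorizes that supremum by the sum $\sum_{k}|k|^{2s}\gamma_k^2$; your version is the sharper of the two.
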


\begin{proof}
From Equation~(\ref{eq:posmean}) with $\mathcal{L}'= \mathcal{L}$, we have
\begin{eqnarray*}
\left( nI+\Gamma \mathcal{C}_{0}^{-1}\right) m_{n}'
= \Gamma \mathcal{C}_{0}^{-1} m_{0} + nu
+\sum_{l=0}^{n-1} e^{t_{l+1}\mathcal{L} } \eta_{l+1}',
\end{eqnarray*}
thus
\begin{eqnarray*}
\left( nI+\Gamma \mathcal{C}_{0}^{-1}\right) (m_{n}'-u)
= \Gamma \mathcal{C}_{0}^{-1} (m_{0}-u )
+\sum_{l=0}^{n-1} e^{t_{l+1} \mathcal{L} }\eta_{l+1}'.
\end{eqnarray*}
Take $\{ (\mathbf{g}_k')^{l+1} \}_{k \in \mathbb{K}}$ where
$l=0,\ldots,n-1$, to be an i.i.d. sequence of
complex-valued
unit Gaussians
subject to the constraint that
$(\mathbf{g}_{-k}')^{l}=(\bar{\mathbf{g}}_{k}')^{l}$.
Then the Karhunen-Lo\`{e}ve expansion for
$e^{t_{l+1}\mathcal{L}}\eta_{l+1}' \sim \mathcal{N}\left(0,\Gamma' \right)$
is given by
\begin{eqnarray}
\label{eq:newKLexpn}
e^{t_{l+1} \mathcal{L}}\eta'_{l+1}
= \sum_{k \in \mathbb{K}} \sqrt{\gamma_k'}(\mathbf{g}_k')^{l+1} \phi_k.
\end{eqnarray}
It follows that
\begin{eqnarray*}
\fl \left\| m_n'-u \right\|^2_{L^2(\Omega'; H^s(\mathbf{T}^2))}
& = \mathbb{E} \left\| m_n'-u \right\|^2_{H^s(\mathbf{T}^2)}  \\
& = \sum_{k \in \mathbb{K}^+} |k|^{2s}
\mathbb{E} \lvert (m_n'-u,\phi_k) \rvert^2+ \mathbb{E} \lvert (m_n'-u,\phi_0)\rvert^2
\\
& = \sum_{k \in \mathbb{K}^+} \frac{|k|^{2s} }{\left( n+\gamma_k/\lambda_k\right)^2}
\left[ \left( \frac{\gamma_k}{\lambda_k} \right)^2 \lvert (m_0-u,\phi_k) \rvert^2 +\gamma_k'
n\right] \\
& \quad + \frac{1}{\left( n+\gamma_0/\lambda_0\right)^2}
\left[ \left( \frac{\gamma_0}{\lambda_0} \right)^2 \lvert (m_0-u,\phi_0) \rvert^2 +\gamma_0'
n\right] \\
& \leq \sum_{k \in \mathbb{K}^+} |k|^{2s} n^{-2}
\left[ \left( \frac{\gamma_k}{\lambda_k} \right)^2 \lvert (m_0-u,\phi_k) \rvert^2 +\gamma_k'
n\right] \\
& \quad + n^{-2}
\left[ \left( \frac{\gamma_0}{\lambda_0} \right)^2 \lvert (m_0-u,\phi_0) \rvert^2 +\gamma_0'
n\right] \\
& \leq \left( C\left\| m_0 -u \right\|_{H^{s+\kappa}(\mathbf{T}^2)}\right) n^{-2}
+\left( \sum_{k \in \mathbb{K}} |k|^{2s}\gamma_k' \right)n^{-1},
\end{eqnarray*}
and so we have Equation~(\ref{eq:discov-a}).
Here and throughout the paper, $C$ is a constant that may change from line to line.

Equation~(\ref{eq:discov-b}) follows from the Borel-Cantelli Lemma
\cite{Varadhan}:
for arbitrary $\epsilon > 0$, we have
\begin{eqnarray*}
&\sum_{n \in \mathbb{N}}
\mathbb{P}\left( n^\theta \left\| m_n'-u \right\|_{H^s(\mathbf{T}^2)} > \epsilon \right)
= \sum_{n \in \mathbb{N}}
\mathbb{P}\left( n^{2r\theta} \left\| m_n'-u \right\|_{H^s(\mathbf{T}^2)}^{2r}
> \epsilon^{2r} \right) \\
&\qquad \qquad \leq
\sum_{n \in \mathbb{N}}
\frac{n^{2r\theta}}{\epsilon^{2r}}
\mathbb{E}  \left\| m_n'-u \right\|^{2r}_{H^s(\mathbf{T}^2)} 
\qquad \mbox{(Markov inequality)} \\
&\qquad \qquad \leq 
\sum_{n \in \mathbb{N}}
C n^{2r\theta}
\left( \mathbb{E} \left\| m_n'-u \right\|^{2}_{H^s(\mathbf{T}^2)} \right)^r
\qquad \mbox{(Lemma~\ref{gaussianmoment})} \\
&\qquad \qquad \leq
\sum_{n \in \mathbb{N}}
\frac{C}{n^{r(1-2\theta)}} < \infty
\qquad \mbox{(by (\ref{eq:discov-a}))}
\end{eqnarray*}
and if $\theta \in (0,1/2)$ then we can choose $r$ such that
$r(1-2\theta)>1$.

Finally, for $h=\sum_{k\in \mathbb{K}} h_k \phi_k$,
\begin{eqnarray*}
\left\| \mathcal{C}_n \right\|^2_{\mathbf{L}(L^2(\mathbf{T}^2);H^s(\mathbf{T}^2))}
& = \sup_{\left\| h \right\|_{L^2(\mathbf{T}^2)} \leq 1}
\left\| \mathcal{C}_n h  \right\|^2_{H^s(\mathbf{T}^2)} \\
& = \sup_{\left\| h \right\|_{L^2(\mathbf{T}^2)} \leq 1}
\sum_{k\in \mathbb{K}} |k|^{2s} \left\lvert n\gamma_k^{-1}+\lambda_k^{-1} \right\rvert^{-2}|h_k|^2\\
& \leq C \sum_{k\in \mathbb{K}} |k|^{2s} \left\lvert n\gamma_k^{-1}+\lambda_k^{-1} \right\rvert^{-2}\\
& \leq \frac{C}{n^2} \sum_{k\in \mathbb{K}} |k|^{2s}\gamma_k^2.
\end{eqnarray*}
and use the fact
that $\sup_{k \in \mathbb{K}} \gamma_k<\infty,$ since $\Gamma$
is trace-class, together with Assumptions~\ref{ass1}(1)
to get the desired convergence rate.
\end{proof}

\begin{corollary}
\label{almosteverwhere}
Under the same assumptions as Theorem~\ref{limitthm1},
if Equation~(\ref{eq:discov-b}) holds with $s>1$, then
\begin{eqnarray}
\label{eq:discov-c}
\left\| m_n'-u \right\|_{L^\infty(\mathbf{T}^2)} = o\left(n^{-\theta}\right)
\quad \Omega'-a.s.,
\end{eqnarray}
for any non-negative $\theta<1/2$.
\end{corollary}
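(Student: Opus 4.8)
The plan is to obtain Corollary~\ref{almosteverwhere} as a direct consequence of the Sobolev embedding $H^s(\mathbf{T}^2)\hookrightarrow L^\infty(\mathbf{T}^2)$, which is available precisely because $s>1=d/2$ in spatial dimension $d=2$. Granting such an embedding with some constant $C_{\mathrm{emb}}$ independent of $n$, one applies it to $f=m_n'-u$ to get the pointwise (in $\omega\in\Omega'$) bound $\|m_n'-u\|_{L^\infty(\mathbf{T}^2)}\le C_{\mathrm{emb}}\,\|m_n'-u\|_{H^s(\mathbf{T}^2)}$. Since Theorem~\ref{limitthm1}, in the form of Equation~(\ref{eq:discov-b}), already supplies $\|m_n'-u\|_{H^s(\mathbf{T}^2)}=o(n^{-\theta})$ $\Omega'$-a.s.\ for every $\theta<1/2$, the claimed estimate (\ref{eq:discov-c}) then holds on the very same full-measure event, with no further probabilistic input required.

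I would prove the embedding by hand in the Fourier basis $\{\phi_k\}$, exploiting that $|\phi_k(x)|\equiv 1$. Writing $f=\sum_{k\in\mathbb{K}}(f,\phi_k)\phi_k$, the triangle inequality gives the pointwise bound $\|f\|_{L^\infty(\mathbf{T}^2)}\le\sum_{k\in\mathbb{K}}|(f,\phi_k)|$. I would then split off the $k=0$ mode and apply Cauchy--Schwarz to the remaining high modes:
\begin{eqnarray*}
\sum_{k\in\mathbb{K}^+}|(f,\phi_k)|
&=&\sum_{k\in\mathbb{K}^+}|k|^{s}|(f,\phi_k)|\,|k|^{-s}\\
&\le& \Big(\sum_{k\in\mathbb{K}^+}|k|^{2s}|(f,\phi_k)|^2\Big)^{1/2}\Big(\sum_{k\in\mathbb{K}^+}|k|^{-2s}\Big)^{1/2}.
\end{eqnarray*}
Recombining this with the separated zero mode and recalling the definition of $\|\cdot\|_{H^s(\mathbf{T}^2)}$ yields $\|f\|_{L^\infty(\mathbf{T}^2)}\le C_{\mathrm{emb}}\|f\|_{H^s(\mathbf{T}^2)}$, with $C_{\mathrm{emb}}^2$ controlled by $1+\sum_{k\in\mathbb{K}^+}|k|^{-2s}$.

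The crux, and the only place where the hypothesis $s>1$ enters, is the convergence of the two-dimensional lattice sum $\sum_{k\in\mathbb{K}^+}|k|^{-2s}$. Grouping lattice points into dyadic shells $2^{j}\le|k|<2^{j+1}$, each shell contains $O(2^{2j})$ points of $\mathbb{Z}^2$, so the sum is comparable to $\sum_{j}2^{2j}2^{-2sj}=\sum_{j}2^{(2-2s)j}$, a geometric series convergent exactly when $2-2s<0$, i.e.\ $s>1$; equivalently one compares with $\int_1^\infty r^{1-2s}\,dr<\infty$. This both produces the finite constant $C_{\mathrm{emb}}$ and pinpoints why the threshold is $s>1$ rather than $s>0$, matching $d/2$ with $d=2$. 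No other step is delicate: the absolute summability of the Fourier coefficients just established also legitimizes the pointwise triangle-inequality bound used at the outset, and because $C_{\mathrm{emb}}$ is independent of both $n$ and $\omega\in\Omega'$, the almost-sure rate in (\ref{eq:discov-b}) transfers verbatim to the $L^\infty(\mathbf{T}^2)$ norm, giving (\ref{eq:discov-c}).
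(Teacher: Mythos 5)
Your proposal is correct and follows exactly the paper's argument: the corollary is an immediate consequence of (\ref{eq:discov-b}) combined with the Sobolev embedding $H^s(\mathbf{T}^2)\hookrightarrow L^\infty(\mathbf{T}^2)$, valid for $s>d/2=1$ with an $n$- and $\omega$-independent constant. The only difference is that you also supply a self-contained Fourier-series proof of the embedding (via Cauchy--Schwarz and convergence of $\sum_{k\in\mathbb{K}^+}|k|^{-2s}$ for $s>1$), which the paper simply cites as standard.
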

\begin{proof}
Equation~(\ref{eq:discov-c}) is immediate from Equation~(\ref{eq:discov-b}) and 
the Sobolev embedding,
\begin{eqnarray*}
\left\| \cdot \right\|_{L^\infty(\mathbf{T}^2)}
\leq C \left\| \cdot \right\|_{H^s(\mathbf{T}^2)},
\end{eqnarray*}
when $s > d/2 =1$ since $d=2$ is the dimension of the domain.
\end{proof}

\begin{corollary}
\label{limitwithoutmodelerror}
Under the same assumptions as Theorem~\ref{limitthm1},
as $n\to \infty$,
the $\Omega'-a.s.$ weak convergence $\mu_n' \Rightarrow \delta_u$ holds
in $L^2(\mathbf{T}^2)$.
\end{corollary}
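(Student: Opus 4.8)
The plan is to establish that a random element $X_n$ distributed according to $\mu_n'$ converges to the constant $u$ in the $L^2(\mathbf{T}^2)$ norm \emph{in probability}; since convergence in probability to a constant implies convergence in distribution to the associated point mass, this yields $\mu_n' \Rightarrow \delta_u$. Thus it suffices to show that, $\Omega'$-a.s.,
\[
\mu_n'\bigl(\{x \in L^2(\mathbf{T}^2) : \|x-u\|_{L^2(\mathbf{T}^2)} > \epsilon\}\bigr) \longrightarrow 0 \quad \text{for every } \epsilon > 0 .
\]
Because $L^2(\mathbf{T}^2)$ is a separable metric space, this is the clean characterization of weak convergence to a Dirac mass, and it isolates exactly the two quantities already under our control: the location $m_n'$ of the Gaussian $\mu_n'$ and its spread $\mathcal{C}_n$.

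First I would fix a realization $\omega' \in \Omega'$ in the full-measure set on which Equation~(\ref{eq:discov-b}) of Theorem~\ref{limitthm1} holds, so that $\|m_n'-u\|_{H^s(\mathbf{T}^2)} \to 0$ and hence, by the embedding $H^s(\mathbf{T}^2) \hookrightarrow L^2(\mathbf{T}^2)$, also $\|m_n'-u\|_{L^2(\mathbf{T}^2)} \to 0$. For this frozen $\omega'$ the measure $\mu_n' = \mathcal{N}(m_n',\mathcal{C}_n)$ is a \emph{deterministic} Gaussian, and a sample decomposes as $X_n = m_n' + Z_n$ with $Z_n \sim \mathcal{N}(0,\mathcal{C}_n)$. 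Once $n$ is large enough that $\|m_n'-u\|_{L^2(\mathbf{T}^2)} < \epsilon/2$, the triangle inequality gives the inclusion $\{\|X_n-u\|_{L^2(\mathbf{T}^2)} > \epsilon\} \subseteq \{\|Z_n\|_{L^2(\mathbf{T}^2)} > \epsilon/2\}$, so Markov's inequality and the identity $\mathbb{E}\|Z_n\|_{L^2(\mathbf{T}^2)}^2 = \mathrm{tr}(\mathcal{C}_n)$ (expectation under $\mu_n'$) yield
\[
\mu_n'\bigl(\{\|x-u\|_{L^2(\mathbf{T}^2)} > \epsilon\}\bigr) \leq \frac{4}{\epsilon^2}\,\mathrm{tr}(\mathcal{C}_n).
\]

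It then remains to show $\mathrm{tr}(\mathcal{C}_n) \to 0$, which follows from a short direct estimate. By Equation~(\ref{eq:discretekalman-c}) together with (\ref{eq:eigen}), the eigenvalues of $\mathcal{C}_n$ are $(n\gamma_k^{-1}+\lambda_k^{-1})^{-1} = \lambda_k\gamma_k/(n\lambda_k+\gamma_k) \leq \gamma_k/n$, whence
\[
\mathrm{tr}(\mathcal{C}_n) = \sum_{k \in \mathbb{K}}\bigl(n\gamma_k^{-1}+\lambda_k^{-1}\bigr)^{-1} \leq \frac{1}{n}\sum_{k \in \mathbb{K}}\gamma_k = \frac{\mathrm{tr}(\Gamma)}{n} \longrightarrow 0,
\]
the sum being finite since $\Gamma$ is trace-class. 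Combining the two displays shows the probability on the left vanishes as $n\to\infty$ for the fixed good $\omega'$; as these $\omega'$ form a set of full $\Omega'$-measure (and the set does not depend on $\epsilon$), the convergence in probability, and hence the weak convergence $\mu_n' \Rightarrow \delta_u$, holds $\Omega'$-a.s.

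The main obstacle is conceptual rather than computational: one must keep the two nested sources of randomness strictly separate. The observation noise $\omega' \in \Omega'$ has to be frozen first, turning $m_n'$ into a deterministic sequence converging to $u$ via Theorem~\ref{limitthm1}, while the internal Gaussian fluctuation $Z_n$ of $\mu_n'$ is controlled independently by the trace bound. Respecting this ordering is precisely what makes the reduction to convergence in probability legitimate. I would favour this route over the characteristic-function approach, since on the infinite-dimensional space $L^2(\mathbf{T}^2)$ the pointwise convergence $\widehat{\mu_n'}(\xi) \to \exp(i(u,\xi))$—which is immediate from $m_n'\to u$ and $\|\mathcal{C}_n\|\to 0$—would still require an additional tightness argument to conclude weak convergence, whereas the trace estimate above supplies tightness for free.
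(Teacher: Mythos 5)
Your argument is correct, but it takes a genuinely different and more elementary route than the paper. The paper proves this corollary by invoking a general criterion for weak convergence of Gaussian measures on a Hilbert space (Example 3.8.15 of \cite{bogachev1998gaussian}): it verifies convergence of the means via Equation~(\ref{eq:discov-b}), convergence of the covariance operators in $\mathbf{L}(L^2(\mathbf{T}^2),L^2(\mathbf{T}^2))$ via Equation~(\ref{eq:discovop}), and convergence of second moments via the same trace bound $\mathrm{tr}(\mathcal{C}_n)\leq n^{-1}\mathrm{tr}(\Gamma)$ that you derive from Equation~(\ref{eq:discretekalman-c}). You instead exploit the fact that the limit is a Dirac mass: weak convergence to a point mass is equivalent to concentration, i.e.\ $\mu_n'(\{\|x-u\|_{L^2(\mathbf{T}^2)}>\epsilon\})\to 0$, which you obtain by freezing $\omega'$, splitting off the mean, and applying Chebyshev with $\mathbb{E}\|Z_n\|^2_{L^2(\mathbf{T}^2)}=\mathrm{tr}(\mathcal{C}_n)$. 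This buys you a self-contained proof that needs only the two ingredients $m_n'\to u$ ($\Omega'$-a.s.) and $\mathrm{tr}(\mathcal{C}_n)\to 0$, and dispenses with both the external citation and the operator-norm estimate (\ref{eq:discovop}); your closing remark about tightness coming for free from the trace estimate is also accurate. The trade-off is that your argument is specific to degenerate (Dirac) limits, whereas the criterion the paper cites handles convergence to an arbitrary Gaussian limit and is reused verbatim for Corollaries~\ref{limitwithoutmodelerror2}, \ref{limitwithmodelerror} and \ref{limitwithmodelerror2}; your method would still cover those cases, since all the limits in this paper are Dirac measures, but that is a feature of this particular paper rather than of the method.
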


\begin{proof}
To prove this result we apply Example 3.8.15
in \cite{bogachev1998gaussian}. This shows that for
weak convergence of $\mu_n' = \mathcal{N}\left(m_n',\mathcal{C}_n \right)$
a Gaussian measure on $\mathcal{H}$
to a limit measure $\mu = \mathcal{N}\left(m,\mathcal{C}\right)$ on $\mathcal{H}$,
it suffices to show that
$m_n' \to m$ in $\mathcal{H}$, that $\mathcal{C}_n \to
\mathcal{C}$ in $\mathbf{L}(\mathcal{H},\mathcal{H})$
and that second moments converge. Note, also, that
Dirac measures, and more generally semi-definite
covariance operators, are included in the definition of
Gaussian. The convergence of the means and the
covariance operators follows from Equations~(\ref{eq:discov-b})
and (\ref{eq:discovop})
with $m=u$, $\mathcal{C}=0$
and $\mathcal{H}=L^2(\mathbf{T}^2)$.
The convergence of the second comments follows if the
trace of $\mathcal{C}_n$ converges to zero. From
(\ref{eq:discretekalman-c}) it follows that the trace
of $\mathcal{C}_n$ is bounded by $n^{-1}$ multiplied
by the trace of $\Gamma.$ But $\Gamma$ is trace-class
as it is a covariance operator on $L^2(\mathbf{T}^2)$
and so the desired result follows.
\end{proof}

In fact,
the weak convergence in the Prokhorov metric between $\mu_n'$ and $\delta_u$ holds,
and
the methodology in~\cite{NP08} could be used to quantify
its rate of convergence.

We now obtain the large data limit of the filtering distribution $(\mu')^n$ without model error
from the smoothing limit~(\ref{eq:discov}).
Recall that this measure is the Gaussian ${\mathcal N}\bigl(
(m')^n,{\mathcal C}_n\bigr).$

\begin{thm}
\label{NEW1}
Under the same assumptions as Theorem~\ref{limitthm1},
as $n\to \infty$,
$(m')^n-v'_n \to 0$ in the sense that
\begin{subequations}
\label{eq:disfilternomodelerror}
\begin{eqnarray}
&\left\| (m')^n-v'_n \right\|_{L^2(\Omega'; H^s(\mathbf{T}^2))} = O\left(n^{-\frac{1}{2}}\right),
\label{eq:disfilternomodelerror-a} \\
&\left\| (m')^n-v'_n \right\|_{H^s(\mathbf{T}^2)} = o\left(n^{-\theta}\right)
\quad \Omega'-a.s.,
\label{eq:disfilternomodelerror-b}
\end{eqnarray}
\end{subequations}
for any non-negative $\theta<1/2$.
\end{thm}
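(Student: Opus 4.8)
The plan is to deduce the filtering estimate directly from the smoothing estimate of Theorem~\ref{limitthm1}, exploiting the fact that the forward solution operator $e^{-t_n\mathcal{L}}$ is a norm-preserving map on $H^s(\mathbf{T}^2)$, so that the bounds on $\left\| m_n'-u \right\|$ transfer verbatim to bounds on $\left\| (m')^n - v_n' \right\|$.

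First I would recall, as observed after Corollary~\ref{generaltermsmoother} and in the discussion defining $(\mu')^n$, that the filter mean is the image of the smoother mean under the model dynamics, so that $(m')^n = e^{-t_n\mathcal{L}}m_n'$. In the no-model-error regime $\mathcal{L}=\mathcal{L}'$ the true signal is $v_n' = e^{-t_n\mathcal{L}'}u = e^{-t_n\mathcal{L}}u$. Subtracting yields the clean identity
\begin{eqnarray*}
(m')^n - v_n' = e^{-t_n\mathcal{L}}\left( m_n' - u \right),
\end{eqnarray*}
valid pathwise for each realization $\omega' \in \Omega'$.

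The key step is to show that $e^{-t_n\mathcal{L}}$ preserves the $H^s(\mathbf{T}^2)$ norm. By Assumption~\ref{asp:1}, $\mathcal{L}$ is diagonal in the Fourier basis with purely imaginary eigenvalues $\ell_k$, so $e^{-t_n\mathcal{L}}\phi_k = e^{-t_n\ell_k}\phi_k$ with $\lvert e^{-t_n\ell_k} \rvert = 1$. Since the $H^s$ norm is built only from the moduli of the Fourier coefficients, one has $\lvert (e^{-t_n\mathcal{L}}f,\phi_k) \rvert = \lvert (f,\phi_k) \rvert$ for every $k \in \mathbb{K}$, and hence $\left\| e^{-t_n\mathcal{L}}f \right\|_{H^s(\mathbf{T}^2)} = \left\| f \right\|_{H^s(\mathbf{T}^2)}$ for all $f \in H^s(\mathbf{T}^2)$.

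Applying this isometry with $f = m_n' - u$ gives the pathwise identity $\left\| (m')^n - v_n' \right\|_{H^s(\mathbf{T}^2)} = \left\| m_n' - u \right\|_{H^s(\mathbf{T}^2)}$. Equation~(\ref{eq:disfilternomodelerror-b}) is then immediate from (\ref{eq:discov-b}), while squaring the identity and taking the expectation over $\Omega'$ transfers (\ref{eq:discov-a}) to (\ref{eq:disfilternomodelerror-a}). There is no genuine obstacle here: the only point requiring care is the mode-by-mode isometry, which is exactly where the purely imaginary spectrum of $\mathcal{L}$ from Assumption~\ref{asp:1} enters, and the rest is a direct transcription of Theorem~\ref{limitthm1}.
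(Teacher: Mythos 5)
Your proposal is correct and follows essentially the same route as the paper: both write $(m')^n - v_n' = e^{-t_n\mathcal{L}}(m_n'-u)$ and transfer the bounds of Theorem~\ref{limitthm1} using the fact that $e^{-t_n\mathcal{L}}$ has unit operator norm on $H^s(\mathbf{T}^2)$. The only cosmetic difference is that you establish a mode-by-mode isometry (equality of norms) where the paper is content with the operator-norm inequality; the substance is identical.
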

\begin{proof}
Equation~(\ref{eq:disfilternomodelerror-b}) follows from
\begin{eqnarray*}
\left\| (m')^n -v'_n \right\|_{H^s(\mathbf{T}^2)}
&= \left\| e^{-t_n\mathcal{L}}m_n' - e^{-t_n\mathcal{L}'}u
\right\|_{H^s(\mathbf{T}^2)} \\
&= \left\| e^{-t_n\mathcal{L}}\left(m_n' -u\right)
\right\|_{H^s(\mathbf{T}^2)} \\
& \leq \left\| e^{-t_n\mathcal{L}} \right\|_{\mathbf{L}(H^s(\mathbf{T}^2);H^s(\mathbf{T}^2))}
\left\| m_n' - u \right\|_{H^s(\mathbf{T}^2)}\\
& =\left\| m_n' - u \right\|_{H^s(\mathbf{T}^2)}.
\end{eqnarray*}
Then Equation~(\ref{eq:disfilternomodelerror-a}) follows from
\begin{eqnarray*}
\left\| (m')^n-v'_n \right\|^2_{L^2(\Omega'; H^s(\mathbf{T}^2))}
& = \mathbb{E} \left\| (m')^n -v'_n \right\|_{H^s(\mathbf{T}^2)}^2 \\
& \leq
\mathbb{E} \left\| m_n' - u \right\|_{H^s(\mathbf{T}^2)}^2.
\end{eqnarray*}
\end{proof}

The following corollary has 
the same as for Corollary~\ref{limitwithoutmodelerror},
and so we omit it.

\begin{corollary}
\label{limitwithoutmodelerror2}
Under the same assumptions as Theorem~\ref{limitthm1},
as $n\to \infty$,
the $\Omega'-a.s.$ weak convergence $(\mu')^n - \delta_{v_n'} \Rightarrow 0$ holds
in $L^2(\mathbf{T}^2)$.
\end{corollary}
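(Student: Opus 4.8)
The plan is to follow the proof of Corollary~\ref{limitwithoutmodelerror} essentially verbatim, once the statement has been recast in a form to which the Gaussian weak-convergence criterion (Example 3.8.15 in \cite{bogachev1998gaussian}) applies. The only conceptual difference from Corollary~\ref{limitwithoutmodelerror} is that the Dirac mass now sits at the moving point $v_n'$ rather than at a fixed point, so I would first interpret the claim $(\mu')^n - \delta_{v_n'} \Rightarrow 0$ as the weak convergence to $\delta_0$ of the centred measures $\nu_n$ obtained by pushing $(\mu')^n$ forward under the translation $x \mapsto x - v_n'$. Since $(\mu')^n = \mathcal{N}\bigl((m')^n, \mathcal{C}^n\bigr)$, this translated measure is the Gaussian $\nu_n = \mathcal{N}\bigl((m')^n - v_n', \mathcal{C}^n\bigr)$, and the target is the Dirac $\delta_0 = \mathcal{N}(0,0)$, admissible as a degenerate Gaussian in the criterion.

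With this reduction in place, I would verify the three hypotheses of the criterion on $\mathcal{H} = L^2(\mathbf{T}^2)$. For convergence of the means, $(m')^n - v_n' \to 0$ in $L^2(\mathbf{T}^2)$ follows directly from Theorem~\ref{NEW1}, Equation~(\ref{eq:disfilternomodelerror-b}), since the $H^s$-norm dominates the $L^2$-norm; this holds $\Omega'$-a.s. For convergence of the covariance operators, I would use the identity $\mathcal{C}^n = \mathcal{C}_n$ together with Equation~(\ref{eq:discovop}) to conclude $\mathcal{C}^n \to 0$ in $\mathbf{L}(L^2(\mathbf{T}^2), L^2(\mathbf{T}^2))$, with limit $\mathcal{C} = 0$. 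For convergence of the second moments it suffices, exactly as in Corollary~\ref{limitwithoutmodelerror}, that the trace of $\mathcal{C}^n = \mathcal{C}_n$ tends to zero; by Equation~(\ref{eq:discretekalman-c}) this trace is bounded by $n^{-1}$ times the trace of $\Gamma$, and $\Gamma$ is trace-class as a covariance operator on $L^2(\mathbf{T}^2)$.

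These three facts together deliver $\nu_n \Rightarrow \delta_0$ $\Omega'$-a.s., which is precisely the assertion. The main thing to get right is the book-keeping of the moving centre: once one records that the filter covariance coincides with the smoother covariance, $\mathcal{C}^n = \mathcal{C}_n$, the covariance and second-moment inputs are identical to those in Corollary~\ref{limitwithoutmodelerror}, and the only genuinely new ingredient is the mean convergence, which Theorem~\ref{NEW1} already supplies. Consequently there is no real obstacle beyond this reinterpretation, which is exactly why the authors note that the proof is the same as for Corollary~\ref{limitwithoutmodelerror} and omit it.
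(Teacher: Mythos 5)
Your proposal is correct and follows exactly the route the paper intends: the authors omit the proof precisely because, after recentring at the moving point $v_n'$, it reduces to the three-part Gaussian weak-convergence criterion of Corollary~\ref{limitwithoutmodelerror}, with the mean convergence supplied by Theorem~\ref{NEW1} and the covariance and second-moment arguments unchanged since $\mathcal{C}^n = \mathcal{C}_n$. No gaps.
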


Theorem~\ref{NEW1} and Corollary~\ref{limitwithoutmodelerror2}
are consistent with known results concerning
large data limits of the finite dimensional Kalman filter 
shown in \cite{kumar1986stochastic}.
However, Equations~(\ref{eq:disfilternomodelerror}) and (\ref{eq:discovop}) provide
convergence rates in an infinite dimensional
space and hence cannot be derived from the
finite dimensional theory; mathematically this is because
the rate of convergence in each Fourier mode 
will depend on the wavenumber $k$, 
and the infinite dimensional analysis
requires this dependence to be tracked and quantified,
as we do here.

\subsection{Limit of the conditioned measure with time-independent model error}
The previous subsection shows measure consistency results for data generated by the same PDE as
that used in the filtering model. In this section, we study the consequences of using data generated
by a different PDE.
It is important to point out that,
in view of Equation~(\ref{eq:posmean}),
the limit of $m_n'$ is determined by the time average of
$e^{t_{l+1}\left(\mathcal{L}-\mathcal{L}' \right)}u$, i.e., 
\begin{eqnarray}
\label{eq:averageadvection}
\frac{1}{n} \sum_{l=0}^{n-1} e^{t_{l+1}\left(\mathcal{L}-\mathcal{L}' \right)}u.
\end{eqnarray}
For general anti-Hermitian $\mathcal{L}$ and $\mathcal{L}'$,
obtaining an analytic expression
for the limit of
the average~(\ref{eq:averageadvection}),
as $n \to \infty$, is very hard.
Therefore in the remainder of
the section we examine
the case in which
$\mathcal{L}=c\cdot \nabla$ and $\mathcal{L}'=c'\cdot \nabla$ with different
constant wave velocities $c$ and $c'$, respectively.
A highly nontrivial filter divergence takes place even in this 
simple example.

We use the notation 
$\mathcal{F}_{(p,q)} f \equiv
\sum_{\left({k_1}/{p},{k_2}/{q}\right)
\in \mathbb{Z}\times \mathbb{Z}}
(f,\phi_k) \phi_k$
for part of the Fourier series of $f \in L^2\left( \mathbf{T}^2\right)$,
and
$\langle f \rangle \equiv (f,\phi_0) = \int_{\mathbf{T}^2}f(x,y)\,dxdy$
for the spatial average of $f$ on the torus.
We also denote by $\delta c \equiv c-c'$ the
difference between wave velocities.

\begin{thm}
\label{limitthm2}
For the statistical model (\ref{eq:generalmodel}) and 
(\ref{eq:discreteobservation}) with 
$\mathcal{L}=c\cdot \nabla$,
suppose that the data $Y_n'=\{ y_1',\cdots,y_n' \}$ is created
from (\ref{eq:discretetruthobservation}) with
$\mathcal{L}'=c'\cdot \nabla$ and that
$\delta c \ne 0 \mod (1,1)$
(equivalently
$\delta c \notin \mathbb{Z}\times \mathbb{Z}$).
As $n \to \infty$,
\begin{enumerate}
\item
if $\Delta t \,\delta c = (p'/p, q'/q) \in \mathbb{Q} \times \mathbb{Q}$
and $\gcd(p', p) = \gcd(q', q) = 1$,
then $m_n' \to \mathcal{F}_{(p,q)}u$
in the sense that
\begin{subequations}
\label{eq:discovmodelerror}
\begin{eqnarray}
&\left\| m_n'-\mathcal{F}_{(p,q)}u \right\|_{L^2(\Omega'; H^s(\mathbf{T}^2))}
= O\left(n^{-\frac{1}{2}}\right),
\label{eq:discovmodelerror-a}
\\
&\left\| m_n'-\mathcal{F}_{(p,q)}u \right\|_{H^s(\mathbf{T}^2)} = o\left(n^{-\theta}\right)
\quad \Omega'-a.s.,
\label{eq:discovmodelerror-b}
\end{eqnarray}
\end{subequations}
for any non-negative $\theta<1/2$;
\item
if $\Delta t \, \delta c \in \mathbb{R}\backslash \mathbb{Q} \times \mathbb{R} \backslash \mathbb{Q}$,
then $m_n' \to \langle u\rangle$
in the sense that
\begin{subequations}
\label{eq:discovmodelerror2}
\begin{eqnarray}
&\left\| m_n'-\langle u \rangle \right\|_{L^2(\Omega'; H^s(\mathbf{T}^2))}
= o\left(1\right),
\label{eq:discovmodelerror-c}
\\
&\left\| m_n'-\langle u \rangle \right\|_{H^s(\mathbf{T}^2)} = o\left(1\right)
\quad \Omega'-a.s.
\label{eq:discovmodelerror-d}
\end{eqnarray}
\end{subequations}
\end{enumerate}
\end{thm}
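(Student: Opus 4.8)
The plan is to reduce the statement to an equidistribution analysis of the ergodic average (\ref{eq:averageadvection}), carried out mode by mode in the Fourier basis $\{\phi_k\}$. Starting from the explicit formula (\ref{eq:posmean}), I would split $m_n'$ into three contributions acted on by $(nI+\Gamma\mathcal{C}_0^{-1})^{-1}$: the deterministic initial term $\Gamma\mathcal{C}_0^{-1}m_0$, the model-error average $\sum_{l=0}^{n-1}e^{t_{l+1}(\mathcal{L}-\mathcal{L}')}u$, and the noise sum $\sum_{l=0}^{n-1}e^{t_{l+1}\mathcal{L}}\eta'_{l+1}$. Since the $k$-th eigenvalue of $(nI+\Gamma\mathcal{C}_0^{-1})^{-1}$ is $(n+\gamma_k/\lambda_k)^{-1}\sim n^{-1}$, the first term is $O(n^{-1})$ in $H^s$, and the noise term is controlled exactly as in the proof of Theorem~\ref{limitthm1}: its Karhunen--Lo\`{e}ve expansion yields a variance $\le n^{-1}\sum_k|k|^{2s}\gamma_k'$, which is $O(n^{-1})$ in $L^2(\Omega';H^s)$ by Assumptions~\ref{ass1}(1) and $o(n^{-\theta})$ $\Omega'$-a.s. by the Markov/Borel--Cantelli argument used there. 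Thus all the interesting behaviour is carried by the model-error average, and the whole problem is to identify its limit.

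Since $\mathcal{L}-\mathcal{L}'=\delta c\cdot\nabla$ is diagonalised by the $\phi_k$ with $e^{t(\mathcal{L}-\mathcal{L}')}\phi_k=e^{2\pi i t(\delta c\cdot k)}\phi_k$, the $k$-th Fourier coefficient of (\ref{eq:averageadvection}) is $(u,\phi_k)$ times the geometric mean $\frac1n\sum_{m=1}^n\zeta_k^m$ with $\zeta_k=e^{2\pi i\Delta t(\delta c\cdot k)}$. This produces a sharp dichotomy: if $\zeta_k=1$ (a \emph{resonant} mode, i.e. $\Delta t(\delta c\cdot k)\in\mathbb{Z}$) the average equals $1$ for every $n$; otherwise $|\frac1n\sum_{m=1}^n\zeta_k^m|\le \frac{2}{n\,|\zeta_k-1|}\to 0$. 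Consequently the limit of (\ref{eq:averageadvection}) is the orthogonal projection of $u$ onto the resonant sublattice $\Lambda=\{k\in\mathbb{K}:\Delta t(\delta c\cdot k)\in\mathbb{Z}\}$, and the two cases of the theorem reduce to identifying $\Lambda$ and quantifying the decay of the non-resonant terms.

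For case (i), with $\Delta t\,\delta c=(p'/p,q'/q)$ and $\gcd(p',p)=\gcd(q',q)=1$, the resonance condition reads $\frac{p'}{p}k_1+\frac{q'}{q}k_2\in\mathbb{Z}$; the number-theoretic step is to show this lattice is exactly $\{k:p\mid k_1,\ q\mid k_2\}$, so that the projection is $\mathcal{F}_{(p,q)}u$. The key quantitative point is that the residues $\Delta t(\delta c\cdot k)\bmod 1$ take values in a \emph{finite} cyclic set $\frac1L\mathbb{Z}/\mathbb{Z}$ (with $L$ the relevant common denominator), so the non-resonant modes enjoy a uniform spectral gap $\inf_{k\notin\Lambda}|\zeta_k-1|=:\delta_0>0$. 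With this gap the deterministic non-resonant contribution is bounded in squared $H^s$-norm by $\frac{1}{n^2\delta_0^2}\sum_k|k|^{2s}|(u,\phi_k)|^2\le \frac{C}{n^2}\|u\|_{H^s(\mathbf{T}^2)}^2$, which is dominated by the $O(n^{-1})$ noise variance; combining these gives the $L^2(\Omega';H^s)$ rate (\ref{eq:discovmodelerror-a}), and the Borel--Cantelli argument of Theorem~\ref{limitthm1} upgrades it to the a.s. rate (\ref{eq:discovmodelerror-b}). For case (ii), when $\Delta t\,\delta c$ has irrational components (more precisely when $1,\Delta t\,\delta c_1,\Delta t\,\delta c_2$ are rationally independent) the only resonant mode is $k=0$, so the projection collapses to $\langle u\rangle$; here no uniform gap exists, and I would instead conclude $o(1)$ convergence, (\ref{eq:discovmodelerror-c}) and (\ref{eq:discovmodelerror-d}), by dominated convergence, using $|\frac1n\sum_{m=1}^n\zeta_k^m|\le 1$ together with $\sum_k|k|^{2s}|(u,\phi_k)|^2<\infty$ as integrable majorant while each non-resonant term tends to $0$.

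The main obstacle is the uniform control of the non-resonant tail, and it is precisely here that the two cases diverge. In case (i) the finiteness of the residue set delivers the gap $\delta_0$ for free and hence an algebraic rate; in case (ii) the residues $\Delta t(\delta c\cdot k)\bmod 1$ are dense (Weyl equidistribution), the divisors $|\zeta_k-1|$ become arbitrarily small, and no rate can survive, which is why only $o(1)$ is claimed. A secondary but essential point, easy to overlook, is the precise number-theoretic identification of $\Lambda$: pinning the resonant lattice down to $\{p\mid k_1,\ q\mid k_2\}$ in case (i) and to $\{0\}$ in case (ii) is what ties the analytic limit to the stated projections $\mathcal{F}_{(p,q)}u$ and $\langle u\rangle$, and it is where the coprimality and (ir)rationality hypotheses are genuinely used.
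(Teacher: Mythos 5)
Your proposal follows essentially the same route as the paper's proof in \ref{app:B}: the same decomposition of $m_n'$ via the explicit formula (\ref{eq:posmean}) into an initial-condition term of order $n^{-1}$, a noise term handled exactly as in Theorem~\ref{limitthm1}, and the model-error average; the same mode-by-mode reduction to the averages $\frac{1}{n}\sum_{l=0}^{n-1}e^{2\pi i(k\cdot\delta c)t_{l+1}}$; the same uniform spectral gap coming from the finiteness of the residue set in the rational case (the paper's bound $\lvert\sin(n\pi(k\cdot\delta c)\Delta t)/\sin(\pi(k\cdot\delta c)\Delta t)\rvert$ is exactly your $2/\lvert\zeta_k-1\rvert$ estimate for the same geometric sum); and the same ergodicity-plus-dominated-convergence (Weierstrass M-test) argument, with no rate, in the irrational case.

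One caveat about the step you single out as ``the number-theoretic step'': the resonant lattice $\Lambda=\{k:\ \frac{p'}{p}k_1+\frac{q'}{q}k_2\in\mathbb{Z}\}$ equals $\{k:\ p\mid k_1,\ q\mid k_2\}$ only when $\gcd(p,q)=1$. If, say, $p=q=2$ and $p'=q'=1$, then $k=(1,1)$ is resonant but is not in the range of $\mathcal{F}_{(2,2)}$, so the identification you propose to prove is false in general and the limit is really the projection onto all of $\Lambda$. This is not a deviation from the paper: its proof makes the identical implicit identification through the indicator $\delta^k_{(p,q)}$ and so shares the gap (the illustrative example $\Delta t\,\delta c=(1/2,1/2)$ does not expose it only because the chosen $u$ in (\ref{eq:trueic}) has all its energy on the coordinate axes, where $k_1+k_2$ even forces both components even). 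Conversely, your parenthetical in case (ii) --- that one needs $1,\Delta t\,\delta c_1,\Delta t\,\delta c_2$ rationally independent, not merely each component irrational, for the resonant set to collapse to $\{0\}$ --- is a point the paper's appeal to ``ergodicity'' glosses over, and you are right to flag it.
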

\begin{proof}
See \ref{app:B}.  
\end{proof}

\begin{remark}
It is interesting that it is not the {\em size} of 
$\Delta t \,\delta c$ 
but its {\em rationality or irrationality} which
determines the limit of $m_n'$.
This result may be understood intuitively
from Equation~(\ref{eq:averageadvection}), which reduces to
\begin{eqnarray}
  \label{eq:average}
\frac{1}{n}\sum_{l=0}^{n-1} u(\cdot + (l+1) \Delta t\,\delta c),
\end{eqnarray}
when $\mathcal{L}=c\cdot \nabla$ and $\mathcal{L}'=c'\cdot \nabla$.
It is possible to guess the large $n$ behaviour of Equation~(\ref{eq:average}) using the periodicity of $u$ and
an ergodicity argument.
The proof of Theorem~\ref{limitthm2} tells us that
the prediction resulting from this heuristic is indeed correct.

The proof of Corollary~\ref{almosteverwhere} tells us that,
whenever $s>1$,
the $H^s\left(\mathbf{T}^2\right)-$norm convergence
in (\ref{eq:discovmodelerror-b})
or (\ref{eq:discovmodelerror-d})
implies the almost everywhere convergence on $\mathbf{T}^2$ with the same order.
Therefore, 
$m_n' \to \mathcal{F}_{(p,q)}u$ or
$m_n' \to \langle u\rangle$
a.e. on $\mathbf{T}^2$ from Equation~(\ref{eq:discovmodelerror-b}) or
from Equation~(\ref{eq:discovmodelerror-d}),
when $\Delta t \,\delta c = (p'/p, q'/q) \in \mathbb{Q} \times \mathbb{Q}$
and $\gcd(p', p) = \gcd(q', q) = 1$, or
when $\Delta t \, \delta c \in \mathbb{R}\backslash \mathbb{Q} \times \mathbb{R} \backslash
\mathbb{Q}$,
respectively.
We will not repeat the statement of the corresponding result in the subsequent theorems.

When $\Delta t\,\delta c \in \mathbb{R}\backslash \mathbb{Q} \times \mathbb{R}\backslash
\mathbb{Q}$,
Equation~(\ref{eq:discovmodelerror2}) is obtained using
\begin{eqnarray}
\label{eq:ergodicity}
\frac{1}{n}\sum_{l=0}^{n-1} e^{2\pi i (k \cdot \delta c) t_{l+1} } = o(1),
\end{eqnarray}
as $n \to \infty$, from the theory of ergodicity
\cite{Breiman}.
The convergence rate of Equation~(\ref{eq:discovmodelerror2}) can be improved
if we have higher order convergence in Equation~(\ref{eq:ergodicity}).
It must be noted that in general there exists a fundamental relationship between the limits of $m_n'$
and the left-hand side of Equation~(\ref{eq:ergodicity}) for various $c$ and $c'$, as we will see.

Note also from Theorem~\ref{limitthm1} and Theorem~\ref{limitthm2},
the limit of $m_n'$ does not depend on the observation noise error $\Gamma \neq \Gamma'$
but does depend sensitively on the model error $\mathcal{L} \neq \mathcal{L}'$.
\qed
\end{remark}

\begin{corollary}
\label{limitwithmodelerror}
Under the same assumptions as Theorem~\ref{limitthm2},
as $n\to \infty$,
the $\Omega'-a.s.$ weak convergence $\mu_n' \Rightarrow \delta_{\mathcal{F}_{(p,q)}u}$ or
$\mu_n' \Rightarrow \delta_{\langle u\rangle}$ holds,
when $\Delta t \,\delta c = (p'/p, q'/q) \in \mathbb{Q} \times \mathbb{Q}$
and $\gcd(p', p) = \gcd(q', q) = 1$, or
when $\Delta t \, \delta c \in \mathbb{R}\backslash \mathbb{Q} \times \mathbb{R} \backslash \mathbb{Q}$,
respectively.
\end{corollary}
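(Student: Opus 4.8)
The plan is to proceed exactly as in the proof of Corollary~\ref{limitwithoutmodelerror}, invoking the Gaussian weak-convergence criterion (Example 3.8.15 in \cite{bogachev1998gaussian}): for $\mu_n'=\mathcal{N}(m_n',\mathcal{C}_n)$ to converge weakly on $\mathcal{H}=L^2(\mathbf{T}^2)$ to $\mathcal{N}(m,\mathcal{C})$ it suffices that $m_n'\to m$ in $\mathcal{H}$, that $\mathcal{C}_n\to\mathcal{C}$ in $\mathbf{L}(\mathcal{H},\mathcal{H})$, and that the second moments converge. Here I would take $\mathcal{C}=0$ and $m=\mathcal{F}_{(p,q)}u$ in the rational case, respectively $m=\langle u\rangle$ in the irrational case, so that the limit measure is precisely the Dirac mass appearing in the statement. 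Recall, as noted in Corollary~\ref{limitwithoutmodelerror}, that degenerate covariance operators are admissible in this criterion, so the Dirac limit is legitimate.

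For the convergence of the means, the work is already done by Theorem~\ref{limitthm2}: Equations~(\ref{eq:discovmodelerror-b}) and (\ref{eq:discovmodelerror-d}) give $\Omega'$-a.s. convergence of $m_n'$ to $\mathcal{F}_{(p,q)}u$, respectively $\langle u\rangle$, in the $H^s(\mathbf{T}^2)$ norm. Since $\|\cdot\|_{L^2(\mathbf{T}^2)}\le\|\cdot\|_{H^s(\mathbf{T}^2)}$, this immediately yields the required $\Omega'$-a.s. convergence $m_n'\to m$ in $\mathcal{H}=L^2(\mathbf{T}^2)$ in each of the two cases.

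For the covariance I would observe that $\mathcal{C}_n=(n\Gamma^{-1}+\mathcal{C}_0^{-1})^{-1}$ from Equation~(\ref{eq:discretekalman-c}) is unchanged by the presence of model error: it depends on the model noise covariance $\Gamma$ and on $\mathcal{C}_0$, but not on $\mathcal{L}'$ or $\Gamma'$. Consequently the two remaining conditions follow verbatim from the argument in Corollary~\ref{limitwithoutmodelerror}. The operator-norm bound (\ref{eq:discovop}) gives $\mathcal{C}_n\to 0$ in $\mathbf{L}(L^2(\mathbf{T}^2);H^s(\mathbf{T}^2))$, and hence in $\mathbf{L}(\mathcal{H},\mathcal{H})$, matching $\mathcal{C}=0$; and the second moments converge because $\mathrm{Tr}\,\mathcal{C}_n$ is bounded by $n^{-1}\,\mathrm{Tr}\,\Gamma\to 0$, using that $\Gamma$ is trace-class. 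Assembling these three ingredients in the Bogachev criterion delivers the claimed weak convergence.

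The point worth stressing is that there is essentially no new obstacle here. All of the analytic difficulty, namely the ergodic/aliasing computation of the limit of the time-average (\ref{eq:averageadvection}), has already been absorbed into Theorem~\ref{limitthm2}, and the covariance behaviour is inherited unchanged from the model-error-free analysis. The only care needed is bookkeeping: confirming that the $H^s$ convergence of the means transfers down to $L^2$, and that the degenerate (Dirac) limit is admissible in the weak-convergence criterion, exactly as was handled in Corollary~\ref{limitwithoutmodelerror}. For this reason I would simply remark that the proof is identical to that of Corollary~\ref{limitwithoutmodelerror}, with Theorem~\ref{limitthm2} supplying the mean limit in place of Theorem~\ref{limitthm1}.
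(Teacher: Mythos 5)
Your proposal is correct and matches the paper exactly: the paper's proof simply states that the argument is identical to that of Corollary~\ref{limitwithoutmodelerror}, which is precisely what you reconstruct — the Bogachev weak-convergence criterion with the mean limit supplied by Theorem~\ref{limitthm2} and the covariance behaviour inherited unchanged from the model-error-free case.
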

\begin{proof}
This is the same as the proof of Corollary~\ref{limitwithoutmodelerror},
so we omit it.
\end{proof}

\begin{remark}
\label{noncomm}
Our theorems show that
the observation accumulation limit and the vanishing model error limit
cannot be switched, i.e.,
\begin{eqnarray*}
\lim_{n\to \infty} \lim_{||\delta c||\to 0}
\left\| m_n'-u\right\|_{H^s(\mathbf{T}^2)}
& =  0, \\
\lim_{||\delta c||\to 0} \lim_{n\to \infty}
\left\| m_n'-u\right\|_{H^s(\mathbf{T}^2)}
& \neq  0 .
\end{eqnarray*}
Note the second limit is nonzero because $m_n'$ converges either to $\mathcal{F}_{(p,q)}u$
or $\langle u\rangle$.
\qed
\end{remark}

We can also study the effect of model error on the
filtering distribution, instead of the smoothing
distribution.
The following theorem extends Theorem~\ref{limitthm2}
to study the measure $(\mu')^n$,
showing that the truth $v_n'$ is not recovered if $\delta c \neq 0$.
This result should be compared with Theorem~\ref{NEW1} in the case of no model error.

\begin{thm}
\label{NEW2}
Under the same assumptions as Theorem~\ref{limitthm2},
as $n \to \infty$,
\begin{enumerate}
\item
if $\Delta t \,\delta c = (p'/p, q'/q) \in \mathbb{Q} \times \mathbb{Q}$
and $\gcd(p', p) = \gcd(q', q) = 1$,
then $(m')^n - \mathcal{F}_{(p,q)} e^{-t_n \mathcal{L}}u \to 0$
in the sense that
\begin{subequations}
\begin{eqnarray}
&\left\| (m')^n-\mathcal{F}_{(p,q)}e^{-t_n \mathcal{L}}u \right\|_{L^2(\Omega'; H^s(\mathbf{T}^2))}
= O\left(n^{-\frac{1}{2}}\right),
\\
&\left\| (m')^n-\mathcal{F}_{(p,q)}e^{-t_n \mathcal{L}}u
\right\|_{H^s(\mathbf{T}^2)} = o\left(n^{-\theta}\right)
\quad \Omega'-a.s.,
\end{eqnarray}
\end{subequations}
for any non-negative $\theta<1/2$;
\item
if $\Delta t \, \delta c \in \mathbb{R}\backslash \mathbb{Q} \times \mathbb{R} \backslash \mathbb{Q}$,
then $(m')^n \to \langle u\rangle$
in the sense that
\begin{subequations}
\label{eq:discovmodelerrorspaticalaverage}
\begin{eqnarray}
&\left\| (m')^n-\langle u \rangle \right\|_{L^2(\Omega'; H^s(\mathbf{T}^2))}
= o\left(1\right),
\\
&\left\| (m')^n-\langle u \rangle \right\|_{H^s(\mathbf{T}^2)} = o\left(1\right)
\quad \Omega'-a.s.
\end{eqnarray}
\end{subequations}
\end{enumerate}
\end{thm}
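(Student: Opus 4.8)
The plan is to reduce Theorem~\ref{NEW2} to Theorem~\ref{limitthm2} in exactly the way that Theorem~\ref{NEW1} was reduced to Theorem~\ref{limitthm1}, exploiting the fact that the filtering mean is the image of the smoothing mean under the model solution operator. First I would recall that $(m')^n = e^{-t_n \mathcal{L}} m_n'$, since $(\mu')^n$ is the pushforward of $\mu_n'$ under the linear map $e^{-t_n\mathcal{L}}$ associated with the \emph{filtering} model~(\ref{eq:generalmodel}) (note $\mathcal{L}$, not $\mathcal{L}'$).

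The key structural observation is that, under Assumption~\ref{asp:1}, the operator $e^{-t_n\mathcal{L}}$ is diagonal in the Fourier basis $\{\phi_k\}$ with eigenvalues $e^{-t_n \ell_k}$ of unit modulus, the $\ell_k$ being purely imaginary. Three consequences follow. (a) $e^{-t_n\mathcal{L}}$ commutes with the partial Fourier projection $\mathcal{F}_{(p,q)}$, since both act diagonally and $\mathcal{F}_{(p,q)}$ merely selects a sublattice of wavenumbers. (b) $e^{-t_n\mathcal{L}}$ fixes the constant mode, $e^{-t_n\mathcal{L}}\langle u\rangle = \langle u\rangle$, because $\mathcal{L}\phi_0 = 0$. (c) $e^{-t_n\mathcal{L}}$ is an isometry of $H^s(\mathbf{T}^2)$ onto itself, with $\|e^{-t_n\mathcal{L}}\|_{\mathbf{L}(H^s(\mathbf{T}^2);H^s(\mathbf{T}^2))}=1$ uniformly in $n$.

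For case (i), I would use (a) to write
\begin{eqnarray*}
(m')^n - \mathcal{F}_{(p,q)} e^{-t_n\mathcal{L}} u
&= e^{-t_n\mathcal{L}} m_n' - e^{-t_n\mathcal{L}} \mathcal{F}_{(p,q)} u \\
&= e^{-t_n\mathcal{L}}\left( m_n' - \mathcal{F}_{(p,q)} u \right),
\end{eqnarray*}
and then invoke the isometry (c) to conclude that $\|(m')^n - \mathcal{F}_{(p,q)} e^{-t_n\mathcal{L}} u\|_{H^s(\mathbf{T}^2)} = \|m_n' - \mathcal{F}_{(p,q)} u\|_{H^s(\mathbf{T}^2)}$, so that both the $\Omega'$-a.s.\ rate and the $L^2(\Omega';H^s(\mathbf{T}^2))$ rate follow immediately from Equation~(\ref{eq:discovmodelerror}). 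For case (ii) the argument is identical but uses (b): one writes $(m')^n - \langle u\rangle = e^{-t_n\mathcal{L}}(m_n' - \langle u\rangle)$, and Equation~(\ref{eq:discovmodelerror2}) transfers the convergence. Taking expectations over $\Omega'$ handles the $L^2(\Omega';\cdot)$ statements in both cases.

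The argument is essentially routine once the reduction is in place, and the genuinely substantive content has already been absorbed into Theorem~\ref{limitthm2} (proved in \ref{app:B}). The only point requiring care—and what I would flag as the main potential obstacle—is that the isometry constant in (c) must be independent of $n$: because $t_n = n\Delta t$ grows, a merely bounded-in-$n$ operator norm would be useless here, whereas the fact that $e^{-t_n\mathcal{L}}$ preserves the $H^s(\mathbf{T}^2)$ norm \emph{exactly} ensures that no $n$-dependent factor degrades the rates inherited from Theorem~\ref{limitthm2}. Beyond verifying that the sublattice projection $\mathcal{F}_{(p,q)}$ and the constant mode are genuinely respected by the diagonal unitary $e^{-t_n\mathcal{L}}$, there is no real difficulty.
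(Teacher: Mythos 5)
Your proposal is correct and follows essentially the same route as the paper: the paper proves Theorem~\ref{NEW2} by repeating the argument of Theorem~\ref{NEW1}, i.e., writing the filtering error as $e^{-t_n\mathcal{L}}$ applied to the smoothing error and using that this operator has unit norm on $H^s(\mathbf{T}^2)$, exactly as you do. Your explicit verification that $e^{-t_n\mathcal{L}}$ commutes with $\mathcal{F}_{(p,q)}$ and fixes the constant mode simply spells out what the paper leaves implicit in the substitution of $\mathcal{F}_{(p,q)}e^{-t_n\mathcal{L}}u$ and $\langle u\rangle$ for $v_n'$.
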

\begin{proof}
This is the same as the proof of Theorem~\ref{NEW1}
except
$\mathcal{F}_{(p,q)} e^{-t_n \mathcal{L}}u$
or $\langle u \rangle$ is used in place of $v_n'$,
so we omit it.
\end{proof}

\begin{corollary}
\label{limitwithmodelerror2}
Under the same assumptions as Theorem~\ref{limitthm2},
as $n\to \infty$,
the $\Omega'-a.s.$ weak convergence
$(\mu')^n - \delta_{\mathcal{F}_{(p,q)}e^{-t_n \mathcal{L}}u} \Rightarrow 0$ or
$(\mu')^n - \delta_{\langle u\rangle}\Rightarrow 0$ holds,
when $\Delta t \,\delta c = (p'/p, q'/q) \in \mathbb{Q} \times \mathbb{Q}$
and $\gcd(p', p) = \gcd(q', q) = 1$, or
when $\Delta t \, \delta c \in \mathbb{R}\backslash \mathbb{Q} \times \mathbb{R} \backslash \mathbb{Q}$,
respectively.
\end{corollary}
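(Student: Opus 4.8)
The plan is to mimic the proof of Corollary~\ref{limitwithoutmodelerror}, applying Example 3.8.15 in \cite{bogachev1998gaussian}. That criterion reduces weak convergence of a sequence of Gaussian measures $\mathcal{N}(a_n,\mathcal{C}_n)$ on $\mathcal{H}=L^2(\mathbf{T}^2)$ to a Gaussian limit $\mathcal{N}(a,\mathcal{C})$ to three checks: convergence of the means $a_n\to a$ in $\mathcal{H}$, convergence of the covariances $\mathcal{C}_n\to\mathcal{C}$ in $\mathbf{L}(\mathcal{H},\mathcal{H})$, and convergence of the second moments. As there, I would recall that Dirac masses are admitted as degenerate Gaussians with vanishing covariance operator.

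The one genuinely new feature, relative to Corollary~\ref{limitwithoutmodelerror2}, is that in case (i) the target Dirac mass $\delta_{\mathcal{F}_{(p,q)}e^{-t_n\mathcal{L}}u}$ has a center that drifts with $n$. I would therefore read the statement $(\mu')^n-\delta_{\mathcal{F}_{(p,q)}e^{-t_n\mathcal{L}}u}\Rightarrow 0$ as weak convergence of the translated family $\mathcal{N}\bigl((m')^n-\mathcal{F}_{(p,q)}e^{-t_n\mathcal{L}}u,\,\mathcal{C}_n\bigr)$ to $\delta_0=\mathcal{N}(0,0)$, and similarly in case (ii) (where the center $\langle u\rangle$ is fixed, so the statement is literally $(\mu')^n\Rightarrow\delta_{\langle u\rangle}$) as convergence of $\mathcal{N}\bigl((m')^n-\langle u\rangle,\,\mathcal{C}_n\bigr)$ to $\delta_0$. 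The essential observation is that the covariance operator $\mathcal{C}_n$ is invariant under translation of the mean and, crucially, does not depend on the model error $\mathcal{L}\neq\mathcal{L}'$; hence the covariance and second-moment checks are \emph{identical} to those in the model-error-free Corollary~\ref{limitwithoutmodelerror}.

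With this reformulation the three checks follow at once. The mean convergence $(m')^n-\mathcal{F}_{(p,q)}e^{-t_n\mathcal{L}}u\to 0$, respectively $(m')^n-\langle u\rangle\to 0$, in $H^s(\mathbf{T}^2)$ and hence in $L^2(\mathbf{T}^2)$, $\Omega'$-a.s., is precisely the content of Theorem~\ref{NEW2}. The covariance convergence $\mathcal{C}_n\to 0$ in $\mathbf{L}(L^2(\mathbf{T}^2);L^2(\mathbf{T}^2))$ follows from Equation~(\ref{eq:discovop}), since the $L^2\!\to\!L^2$ norm is dominated by the $L^2\!\to\!H^s$ norm for $s\geq 0$. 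Finally, the second moments converge because, by Equation~(\ref{eq:discretekalman-c}), the trace of $\mathcal{C}_n$ is bounded by $n^{-1}$ times the trace of the trace-class operator $\Gamma$, and so tends to zero.

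The only mild obstacle is the bookkeeping associated with the moving center in case (i): one must confirm that the criterion of Example 3.8.15, typically phrased for a fixed limit measure, applies to convergence to the \emph{fixed} limit $\delta_0$ of a family whose means drift while their covariances collapse. This is immediate once the problem is recentered as above, since the shifted means converge to $0$ in norm $\Omega'$-a.s.\ and $\mathcal{C}_n\to 0$, so no estimate beyond those already supplied by Theorem~\ref{NEW2} and Equation~(\ref{eq:discovop}) is needed. Because the argument is otherwise word-for-word that of Corollary~\ref{limitwithoutmodelerror}, the full details may reasonably be omitted.
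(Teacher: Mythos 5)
Your proposal is correct and follows essentially the same route as the paper, which simply invokes the argument of Corollary~\ref{limitwithoutmodelerror} (the Bogachev criterion: convergence of means via Theorem~\ref{NEW2}, of covariances via Equation~(\ref{eq:discovop}), and of second moments via the trace bound from Equation~(\ref{eq:discretekalman-c})). Your explicit recentering to handle the $n$-dependent Dirac mass in case (i) is a sensible clarification of a point the paper leaves implicit, but it does not change the substance of the argument.
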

\begin{proof}
This is the same as the proof of Corollary~\ref{limitwithoutmodelerror},
so we omit it.
\end{proof}

Theorem~\ref{limitthm1} and Theorem~\ref{NEW1} show that, in the perfect model scenario,
the smoothing distribution on the initial condition and filtering distribution recover
the true initial condition and true signal, respectively,
even if the statistical model fails to capture the genuine covariance structure of the data.
Theorem~\ref{limitthm2} and Theorem~\ref{NEW2} show that the smoothing distribution
on the initial condition, and the filtering distribution,
do not converge to the truth, in the large data limit, when the model error corresponds 
to a constant shift in wave velocity,
however small.
In this case the wave velocity difference causes an advection
in Equation~(\ref{eq:averageadvection})
leading to recovery of only part of the 
Fourier expansion of $u$ as a limit of $m_n'$.
The next section concerns time-dependent model error
in the wave velocity,
and includes a situation intermediate
between those considered in this section. In
particular, a situation where the smoothing distribution is not
recovered correctly, but the filtering distribution is.

\section{Time-Dependent Model Error}
\label{tdme}
In the previous section
we studied model error for autonomous problems
where the operators $\mathcal{L}$ and $\mathcal{L}'$
(and hence $c$ and $c'$) are assumed time-independent.
However, our approach can be generalized to situations where
both operators are time-dependent: 
$\mathcal{L}(t)=c(t)\cdot \nabla$ and $\mathcal{L}'(t)=c'(t)\cdot \nabla$.
To this end, this section is devoted two problems
where
both the statistical model and the data are generated by
the non-autonomous dynamics. In the
first case deterministic dynamics with 
$c(t) -c'(t) \to 0$ (Theorem~\ref{limitthm3});
and in the second case where
the data is generated by the non-autonomous random dynamics with
$\mathcal{L}'(t;\omega') = c'(t;\omega') \cdot \nabla$
and $c'(t;\omega')$ being a random function fluctuating
around $c$
(Theorem~\ref{limitthm4}).
Here $\omega'$ denotes an element in the probability space that
generates $c'(t;\omega')$ and $\eta_n'$, assumed independent.

Now the statistical model~(\ref{eq:generalmodel}) becomes
\begin{eqnarray}
\label{eq:specificmodel2}
\partial_t v+ c(t) \cdot \nabla v =0,
\quad v(x, 0)=v_0(x)
\end{eqnarray}
Unless $c(t)$ is constant in time, 
the operator $e^{-t\mathcal{L}}$
is not a semigroup operator but for convenience we still
employ this notation
to represent the forward solution operator from time $0$ to time $t$ 
even for non-autonomous dynamics.
Then
the solution of Equation~(\ref{eq:specificmodel2})
is denoted by
\begin{eqnarray*}
v(x,t) = \left( e^{-t\mathcal{L}}v_0 \right) (x) \equiv v_0\left(x - \int^t_0 c(s)\,ds\right).
\end{eqnarray*}
This will correspond to a classical solution if
$v_0 \in C^1\left( \mathbf{T}^2, \mathbb{R} \right)$
and $c \in C\left( \mathbb{R}^+, \mathbb{R}^2 \right)$;
otherwise it will be a weak solution.
Similarly, the notation $e^{-t\mathcal{L}'}$ will be used for the forward solution operator
from time $0$ to time $t$
given the non-autonomous deterministic or random dynamics, i.e.,
Equation~(\ref{eq:generaltruthmodel}) becomes
\begin{eqnarray}
\label{eq:generaltruthmodelrandomwavespeed}
\partial_t v'+ c'(t;\omega') \cdot \nabla v' =0,
\quad
v'(0)=u
\end{eqnarray}
and we define the solution of Equation~(\ref{eq:generaltruthmodelrandomwavespeed}) by
\begin{eqnarray*}
v'(x,t;\omega')
= \left( e^{-t\mathcal{L}'} u \right) (x)
\equiv  u\left(x - \int^t_0 c'(s;\omega')\,ds\right),
\end{eqnarray*}
under the assumption that $\int^t_0 c'(s;\omega')\, ds$ is well-defined.
We will be particularly
interested in the case where $c'(t;\omega')$ is an affine function of a Brownian
white noise and then this expression corresponds to the 
Stratonovich solution of the PDE~(\ref{eq:generaltruthmodelrandomwavespeed})
\cite{flandoli2010well}.
Note the term $e^{t_{l+1} \left(\mathcal{L}-\mathcal{L}' \right)} u$
in Equation~(\ref{eq:posmean})
should be interpreted as
\begin{eqnarray*}
\left( e^{t_{l+1} \left(\mathcal{L}-\mathcal{L}' \right)} u \right)(x)
\equiv u\left(x + \int^t_0 \left( c(s) -c'(s;\omega') \right) \,ds\right).
\end{eqnarray*}

We now study the case where both $c(t)$ and $c'(t)$ are deterministic time-dependent wave velocities.
We here exhibit an intermediate situation between the two previously examined cases where
the smoothing distribution is not recovered correctly, 
but the filtering distribution is. 
This occurs when the
wave velocity
is time-dependent but converges in time to the true
wave velocity,
i.e., $\delta c(t) \equiv c(t)-c'(t) \to 0$, which is of interest
especially in view of Remark~\ref{noncomm}.
Let $u_\alpha \equiv u\left(\cdot + \alpha \right) $ be the translation of $u$ by $\alpha$.

\begin{thm}
\label{limitthm3}
For the statistical model (\ref{eq:generalmodel}) and (\ref{eq:discreteobservation}),
suppose that the data $Y_n'=\{ y_1',\cdots,y_n' \}$ is created
from (\ref{eq:discretetruthobservation}) with
$\mathcal{L}(t)=c(t)\cdot \nabla$ and $\mathcal{L}'(t)=c'(t)\cdot \nabla$
where $\delta c(t)=c(t)-c'(t)$
satisfies 
$\int^t_0 \delta c(s) \,ds =\alpha + {O}\left( t^{-\beta} \right)$.
Then,
as $n \to \infty$,
$m_n' \to u_\alpha$ in the sense that
\begin{subequations}
\label{eq:discovalpha}
\begin{eqnarray}
&\left\| m_n'-u_\alpha \right\|_{L^2(\Omega'; H^s(\mathbf{T}^2))} = O\left(n^{-\phi}\right),
\label{eq:discovalpha-a} \\
&\left\| m_n'-u_\alpha \right\|_{H^s(\mathbf{T}^2)} = o\left(n^{-\theta}\right)
\quad \Omega'-a.s.,
\label{eq:discovalpha-b}
\end{eqnarray}
\end{subequations}
for $\phi=1/2 \wedge \beta$ and
for any non-negative $\theta < \phi$.  
\end{thm}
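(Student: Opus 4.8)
The plan is to follow the proof of Theorem~\ref{limitthm1} closely, treating the model error as an additional \emph{deterministic} forcing in formula~(\ref{eq:posmean}). Set $\alpha_{l+1}\equiv\int_0^{t_{l+1}}\delta c(s)\,ds=\alpha+r_{l+1}$, where by hypothesis $r_{l+1}=O(t_{l+1}^{-\beta})=O\left((l+1)^{-\beta}\right)$, and recall that the non-autonomous solution operator gives $e^{t_{l+1}(\mathcal{L}-\mathcal{L}')}u=u(\cdot+\alpha_{l+1})=u_{\alpha_{l+1}}$. Subtracting $u_\alpha$ (written as $\frac1n\sum_{l=0}^{n-1}u_\alpha$) from~(\ref{eq:posmean}) and multiplying through by $nI+\Gamma\mathcal{C}_0^{-1}$, I would obtain
\begin{eqnarray*}
(nI+\Gamma\mathcal{C}_0^{-1})(m_n'-u_\alpha)
=\Gamma\mathcal{C}_0^{-1}(m_0-u_\alpha)
+\sum_{l=0}^{n-1}\left(u_{\alpha_{l+1}}-u_\alpha\right)
+\sum_{l=0}^{n-1}e^{t_{l+1}\mathcal{L}}\eta_{l+1}'.
\end{eqnarray*}
The first and third terms are exactly as in Theorem~\ref{limitthm1}: the $m_0$ term contributes $O(n^{-1})$ (translation preserves Sobolev norms, so $u_\alpha\in H^{s+\kappa}(\mathbf{T}^2)$ and $m_0-u_\alpha\in H^{s+\kappa}(\mathbf{T}^2)$), and the Karhunen--Lo\`eve expansion~(\ref{eq:newKLexpn}) of the noise gives $O(n^{-1/2})$ in $L^2(\Omega';H^s(\mathbf{T}^2))$ by Assumption~\ref{ass1}(1). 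Only the deterministic model-error sum $\sum_l(u_{\alpha_{l+1}}-u_\alpha)$ is new.

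For this term I would pass to Fourier modes, using that translation acts as a multiplier, $(u(\cdot+a),\phi_k)=e^{2\pi i k\cdot a}(u,\phi_k)$, so the $k$-th coefficient of the sum is $(u,\phi_k)\sum_{l=0}^{n-1}\left(e^{2\pi i k\cdot r_{l+1}}-1\right)$. The elementary bound $\lvert e^{2\pi i k\cdot r_{l+1}}-1\rvert\le 2\pi|k|\,\lvert r_{l+1}\rvert$, the lower bound $n+\gamma_k/\lambda_k\ge n$ on the denominator, and the triangle inequality then give
\begin{eqnarray*}
\left\|\frac1n\sum_{l=0}^{n-1}\left(u_{\alpha_{l+1}}-u_\alpha\right)\right\|_{H^s(\mathbf{T}^2)}
\le \frac{C}{n}\left(\sum_{l=1}^{n}l^{-\beta}\right)\|u\|_{H^{s+1}(\mathbf{T}^2)}.
\end{eqnarray*}
Since $\sum_{l=1}^n l^{-\beta}$ is $O(1)$, $O(\log n)$ or $O(n^{1-\beta})$ according as $\beta>1$, $\beta=1$ or $\beta<1$, this term is $O\left(n^{-\min(\beta,1)}\right)$ up to a logarithm at $\beta=1$. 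Adding the three contributions yields $\|m_n'-u_\alpha\|_{L^2(\Omega';H^s(\mathbf{T}^2))}=O\left(n^{-\min(\beta,1)}\right)+O\left(n^{-1/2}\right)=O\left(n^{-\phi}\right)$ with $\phi=1/2\wedge\beta$, which is~(\ref{eq:discovalpha-a}).

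For the almost sure bound~(\ref{eq:discovalpha-b}) I would split $m_n'-u_\alpha=D_n+G_n$ into its deterministic part $D_n$ (the $m_0$ and model-error terms) and the mean-zero Gaussian part $G_n$ (the noise term). Deterministically $\|D_n\|_{H^s(\mathbf{T}^2)}=O\left(n^{-\min(\beta,1)}\right)=o\left(n^{-\theta}\right)$ for every $\theta<\phi\le\min(\beta,1)$, so it remains to control $G_n$, for which the Borel--Cantelli argument of Theorem~\ref{limitthm1} applies verbatim: Markov's inequality, the Gaussian moment estimate (Lemma~\ref{gaussianmoment}), and $\mathbb{E}\|G_n\|_{H^s(\mathbf{T}^2)}^2=O(n^{-1})$ make $\sum_n\mathbb{P}\left(n^\theta\|G_n\|_{H^s(\mathbf{T}^2)}>\epsilon\right)$ finite whenever $\theta<1/2$, so $\|G_n\|_{H^s(\mathbf{T}^2)}=o(n^{-\theta})$ $\Omega'$-a.s.\ for $\theta<1/2$. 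As $\phi\le1/2$ the two estimates combine to give~(\ref{eq:discovalpha-b}).

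I expect the model-error sum to be the main obstacle. The Lipschitz bound on the phase trades the smallness of $r_{l+1}$ for a factor $|k|$, i.e.\ costs one spatial derivative of $u$, and keeping the wavenumber sum $\sum_k|k|^{2s+2}\lvert(u,\phi_k)\rvert^2$ finite is precisely where the regularity of $u$ in Assumption~\ref{ass1}(3) is consumed; one must verify this derivative is affordable and, for small $\kappa$, weigh the coarse bound $\lvert e^{2\pi i k\cdot r_{l+1}}-1\rvert\le2$ at high wavenumbers against the Lipschitz bound at low ones. The remaining care is bookkeeping: extracting the sharp exponent from the three regimes of $\sum_l l^{-\beta}$ and checking that neither the faster $O(n^{-1})$ decay of the $m_0$ term nor the $O(n^{-1/2})$ decay of the noise ever degrades the rate, so that the exponent is exactly $\phi=1/2\wedge\beta$ and the logarithm at $\beta=1$ stays harmless (dominated there by the noise).
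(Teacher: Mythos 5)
Your proposal is correct and follows essentially the same route as the paper's proof in \ref{app:B}: the same decomposition of $(nI+\Gamma\mathcal{C}_0^{-1})(m_n'-u_\alpha)$ into initial-condition, model-error and noise terms, the same Lipschitz bound $\lvert e^{ix_l}-1\rvert\le C\lvert x_l\rvert$ with $x_l=O(l^{-\beta})$ yielding the $n^{1-\beta}$ growth of the model-error sum, and the same Borel--Cantelli argument for the almost-sure rate. If anything you are more careful than the paper, which absorbs the factor $\lvert k\rvert$ from the phase bound into a constant and writes $\left\| u\right\|_{H^s(\mathbf{T}^2)}$ where the argument really consumes a derivative of $u$ (your $\left\| u\right\|_{H^{s+1}(\mathbf{T}^2)}$, payable from Assumptions~\ref{ass1}(3) or by interpolating with the trivial bound at high wavenumbers).
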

\begin{proof}
See \ref{app:B}.
\end{proof}

\begin{thm}
\label{NEW3}
Under the same assumptions as Theorem~\ref{limitthm3},
if $u$ is Lipschitz continuous in $H^s(\mathbf{T}^2)$
where $s$ is given in Assumptions~\ref{ass1},
then $(m')^n-v'_n \to 0$ in the sense that
\begin{subequations}
\label{eq:disfilter}
\begin{eqnarray}
&\left\| (m')^n-v'_n \right\|_{L^2(\Omega'; H^s(\mathbf{T}^2))}
= O\left(n^{-\phi}\right),
\label{eq:disfilter-a} \\
&\left\| (m')^n-v'_n \right\|_{H^s(\mathbf{T}^2)} = o\left(n^{-\theta}\right)
\quad \Omega'-a.s.,
\label{eq:disfilter-b}
\end{eqnarray}
\end{subequations}
for $\phi=1/2 \wedge \beta$ and
for any non-negative $\theta < \phi$.  
\end{thm}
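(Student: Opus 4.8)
The plan is to mirror the proof of Theorem~\ref{NEW1}, but since the smoother now converges to the translate $u_\alpha$ rather than to $u$ itself (Theorem~\ref{limitthm3}), I recentre the argument on $u_\alpha$ and absorb the leftover mismatch using the Lipschitz hypothesis. As in the perfect-model case I begin from the push-forward identity $(m')^n = e^{-t_n\mathcal{L}}m_n'$ together with $v_n' = e^{-t_n\mathcal{L}'}u$, and I insert the intermediate term $e^{-t_n\mathcal{L}}u_\alpha$ to write
\begin{eqnarray*}
(m')^n - v_n' = e^{-t_n\mathcal{L}}\left(m_n' - u_\alpha\right) + \left(e^{-t_n\mathcal{L}}u_\alpha - e^{-t_n\mathcal{L}'}u\right).
\end{eqnarray*}
The triangle inequality in $H^s(\mathbf{T}^2)$ then reduces the estimate to two pieces, to be bounded separately and recombined.

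For the first piece I use that $e^{-t_n\mathcal{L}}$ is a pure spatial translation, hence an isometry on $H^s(\mathbf{T}^2)$: its action multiplies each Fourier coefficient by a unimodular factor, exactly as exploited in the proof of Theorem~\ref{NEW1}. Thus $\left\| e^{-t_n\mathcal{L}}(m_n'-u_\alpha) \right\|_{H^s(\mathbf{T}^2)} = \left\| m_n'-u_\alpha \right\|_{H^s(\mathbf{T}^2)}$, and the right-hand side is controlled directly by Theorem~\ref{limitthm3}: it is $O(n^{-\phi})$ in $L^2(\Omega'; H^s(\mathbf{T}^2))$ and $o(n^{-\theta})$ $\Omega'$-a.s. for every non-negative $\theta<\phi$.

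The second piece is where the Lipschitz hypothesis enters and is the only genuinely new step. From the definitions $(e^{-t_n\mathcal{L}}u_\alpha)(x) = u\left(x+\alpha-\int_0^{t_n}c(s)\,ds\right)$ and $(e^{-t_n\mathcal{L}'}u)(x) = u\left(x-\int_0^{t_n}c'(s)\,ds\right)$, both functions are translates of $u$, and their translation vectors differ by
\begin{eqnarray*}
\alpha - \int_0^{t_n}\left(c(s)-c'(s)\right)ds = \alpha - \int_0^{t_n}\delta c(s)\,ds = O\left(t_n^{-\beta}\right),
\end{eqnarray*}
by the assumption $\int_0^t\delta c(s)\,ds = \alpha + O(t^{-\beta})$. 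Since $t_n = n\Delta t$ this shift is $O(n^{-\beta})$, and Lipschitz continuity of the translation map $\gamma\mapsto u_\gamma$ into $H^s(\mathbf{T}^2)$ yields the deterministic bound $\left\| e^{-t_n\mathcal{L}}u_\alpha - e^{-t_n\mathcal{L}'}u \right\|_{H^s(\mathbf{T}^2)} = O(n^{-\beta})$.

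Finally I recombine. Because $\phi = 1/2\wedge\beta\le\beta$, the deterministic $O(n^{-\beta})$ term is dominated by $O(n^{-\phi})$, giving the $L^2(\Omega')$ rate~(\ref{eq:disfilter-a}); and because any admissible $\theta<\phi\le\beta$ makes $n^{-\beta}=o(n^{-\theta})$, the same term is negligible against the almost-sure rate from Theorem~\ref{limitthm3}, giving~(\ref{eq:disfilter-b}). The main obstacle, relative to the perfect-model Theorem~\ref{NEW1} where the propagator exactly cancels the discrepancy, is precisely the control of this residual translation of size $O(t_n^{-\beta})$; this cannot be done without quantitative regularity of $u$, which is exactly why the Lipschitz hypothesis is required here but not in Theorem~\ref{NEW1}.
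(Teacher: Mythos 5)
Your proposal is correct and follows essentially the same route as the paper's own proof: the identical decomposition $(m')^n - v_n' = e^{-t_n\mathcal{L}}(m_n'-u_\alpha) + (e^{-t_n\mathcal{L}}u_\alpha - e^{-t_n\mathcal{L}'}u)$, the unit operator norm of the propagator on $H^s(\mathbf{T}^2)$ applied to the first term via Theorem~\ref{limitthm3}, and the Lipschitz bound $C\bigl\| \alpha - \int_0^{t_n}\delta c(s)\,ds \bigr\| = O(n^{-\beta})$ on the second. The recombination using $\phi = 1/2\wedge\beta \le \beta$ matches the paper as well.
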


\begin{proof}
Equation~(\ref{eq:disfilter-b}) follows from
\begin{eqnarray*}
\fl \left\| (m')^n -v'_n \right\|_{H^s(\mathbf{T}^2)}
&= \left\| e^{-t_n\mathcal{L}}(m_n' -u_\alpha)
+ \left( e^{-t_n \mathcal{L}}u_\alpha - e^{-t_n \mathcal{L}'}u \right) \right\|_{H^s(\mathbf{T}^2)} \\
& \leq \left\| e^{-t_n\mathcal{L}} \right\|_{\mathbf{L}(H^s(\mathbf{T}^2);H^s(\mathbf{T}^2))}
\left\| m_n' - u_\alpha \right\|_{H^s(\mathbf{T}^2)}\\
& \quad + \left\| u\left(\cdot+\alpha - \int^{t_n}_0 c(s)\,ds \right)
-u\left( \cdot - \int^{t_n}_0 c'(s)\,ds  \right) \right\|_{H^s(\mathbf{T}^2)} \\
& \leq \left\| m_n' - u_\alpha \right\|_{H^s(\mathbf{T}^2)}
+ C \left\| \alpha- \int^{t_n}_0 \delta c(s)\,ds \right\|,
\end{eqnarray*}
and Equation~(\ref{eq:disfilter-a}) follows from
\begin{eqnarray*}
& \left\| (m')^n-v'_n \right\|^2_{L^2(\Omega'; H^s(\mathbf{T}^2))} \\
& \quad\quad  = \mathbb{E} \left\| (m')^n -v'_n \right\|_{H^s(\mathbf{T}^2)}^2 \\
& \quad\quad  = \mathbb{E} \left\| e^{-t_n\mathcal{L}}\left(m_n'-u_\alpha \right)
+ \left(e^{-t_n\mathcal{L}}u_\alpha - e^{-t_n\mathcal{L}'}u\right) \right\|_{H^s(\mathbf{T}^2)}^2 \\
& \quad\quad \leq 2\left\| e^{-t_n\mathcal{L}}
\right\|^2_{\mathbf{L}(H^s(\mathbf{T}^2);H^s(\mathbf{T}^2))}
 \mathbb{E} \left\| m_n' - u_\alpha \right\|_{H^s(\mathbf{T}^2)}^2 \\
& \quad\quad \quad +2 \left\| u\left(\cdot+\alpha
- \int^{t_n}_0 c(s)\,ds  \right)
-u\left( \cdot - \int^{t_n}_0 c'(s)\,ds \right) \right\|_{H^s(\mathbf{T}^2)}^2.
\end{eqnarray*}
\end{proof}

Finally, we study the case where $c(t)$ is deterministic
but $c'(t;\omega')$ is a random process.
We here note that while the true signal solves a
linear SPDE with multiplicative noise,
Equation~(\ref{eq:generaltruthmodelrandomwavespeed}),
the statistical model used to filter is a linear deterministic
PDE, Equation~(\ref{eq:specificmodel2}).
We study the specific case
$c'(t;\omega') =  c(t) -  \varepsilon \dot{W}(t)$, i.e.,
the deterministic wave velocity 
is modulated by a white noise
with small amplitude $\varepsilon > 0$.

\begin{thm}
\label{limitthm4}
For the statistical model (\ref{eq:generalmodel}) and (\ref{eq:discreteobservation}),
suppose that the data $Y_n'=\{ y_1',\cdots,y_n' \}$ is created
from (\ref{eq:discretetruthobservation}) with
$\mathcal{L}(t)=c(t)\cdot \nabla$ and $\mathcal{L}'(t;\omega')=c'(t;\omega')\cdot \nabla$
where $\int^t_0 c'(s;\omega')\,ds = \int^t_0 c(t)\,ds - \varepsilon {W}(t)$
and $\varepsilon W(t)$ is the Wiener process with amplitude $\varepsilon >0$.
Then,
as $n \to \infty$,
$m_n' \to \langle u\rangle$ in the sense that
\begin{subequations}
  \label{eq:limitthm4eq}
\begin{eqnarray}
&\left\| m_n'-\langle u\rangle \right\|_{L^2(\Omega'; H^s(\mathbf{T}^2))} =
O\left(n^{-\frac{1}{2}}\right),
\\
&\left\| m_n'-\langle u\rangle \right\|_{H^s(\mathbf{T}^2)} = o\left(n^{-\theta}\right)
\quad \Omega'-a.s.,
\end{eqnarray}
\end{subequations}
for any non-negative $\theta <1/2$.  
\end{thm}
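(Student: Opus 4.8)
The plan is to follow the Fourier-mode strategy already used for Theorems~\ref{limitthm1} and~\ref{limitthm2}, the essential new feature being that the model-error term is now a \emph{random} translation of $u$ driven by Brownian motion rather than a deterministic shift. Since $c(t)-c'(t;\omega')=\varepsilon\dot{W}(t)$, the prescription recorded after Equation~(\ref{eq:generaltruthmodelrandomwavespeed}) gives $\left(e^{t_{l+1}(\mathcal{L}-\mathcal{L}')}u\right)(x)=u\left(x+\varepsilon W(t_{l+1})\right)$, whose $k$-th Fourier coefficient is $(u,\phi_k)Z_l$ with $Z_l\equiv e^{2\pi i\,\varepsilon k\cdot W(t_{l+1})}$. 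Diagonalising Equation~(\ref{eq:posmean}) in the basis $\{\phi_k\}$ and inserting the Karhunen--Lo\`{e}ve expansion~(\ref{eq:newKLexpn}) for the observation noise, each Fourier coefficient of $m_n'$ splits into a deterministic initial-condition term of size $O(n^{-1})$, a model-error term $(u,\phi_k)\,n^{-1}S_k^n$ with $S_k^n\equiv\sum_{l=0}^{n-1}Z_l$, and a Gaussian noise term which behaves exactly as in Theorem~\ref{limitthm1}. For $k=0$ one has $Z_l\equiv 1$, so $S_0^n=n$ and this mode converges to $\langle u\rangle$; the content of the theorem is that every mode $k\neq 0$ is annihilated in the limit, i.e. $S_k^n/n\to 0$. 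Here $\mathbb{E}[Z_l]=r_k^{l+1}$ with $r_k\equiv e^{-2\pi^2\varepsilon^2|k|^2\Delta t}$ plays the stochastic analogue of the ergodic phase in Equation~(\ref{eq:ergodicity}), now with genuine exponential decay.

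The crucial computation is the decay of these averaged phases. Using the characteristic function of the Gaussian increments of $W$, I would show $\mathbb{E}[Z_l\bar Z_{l'}]=r_k^{|l-l'|}$, and hence $\mathbb{E}|S_k^n|^2=\sum_{l,l'}r_k^{|l-l'|}\le \frac{1+r_k}{1-r_k}\,n$. Since $|k|^2\ge 1$ for $k\neq 0$, this ratio is bounded by $C_{*}\equiv\frac{1+r_{*}}{1-r_{*}}$ with $r_{*}\equiv e^{-2\pi^2\varepsilon^2\Delta t}<1$, so $\mathbb{E}|S_k^n|^2\le C_{*}n$ \emph{uniformly} in $k\neq 0$. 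Substituting this, together with $\mathbb{E}[S_k^n]=O(1)$ (a geometric series) for the cross terms and the independence of $W$ from the observation noise $\eta'$, into the Fourier series for $\mathbb{E}\|m_n'-\langle u\rangle\|_{H^s(\mathbf{T}^2)}^2$ and summing over modes using Assumptions~\ref{ass1} (which render $\sum_k|k|^{2s}\gamma_k'$, $\|u\|_{H^s}$ and $\|m_0\|_{H^{s+\kappa}}$ finite) yields the $O(n^{-1})$ bound, and hence the $L^2(\Omega';H^s)$ rate $O(n^{-1/2})$.

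The main obstacle is the almost-sure statement, because---unlike in Theorem~\ref{limitthm1}---the random variable $m_n'-\langle u\rangle$ is no longer Gaussian, so Lemma~\ref{gaussianmoment} cannot be invoked directly to pass from the $L^2$ bound to the higher moments needed for a Borel--Cantelli argument. To overcome this I would split $m_n'-\langle u\rangle=A_n+B_n$, where $A_n$ collects the Gaussian observation-noise contribution and $B_n$ collects the deterministic and model-error contributions. The moments of $A_n$ are controlled exactly as in Theorem~\ref{limitthm1} via Lemma~\ref{gaussianmoment}, giving $\mathbb{E}\|A_n\|_{H^s}^{2r}=O(n^{-r})$. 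For $B_n$ the key is a uniform higher-moment bound $\mathbb{E}|S_k^n|^{2r}\le C_r\,n^{r}$ for every $k\neq 0$.

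I would obtain this bound from the autoregressive identity $Z_l=r_kZ_{l-1}+D_l$, where $D_l=Z_{l-1}(\zeta_l-r_k)$, and $\zeta_j=e^{2\pi i\varepsilon k\cdot(W(t_{j+1})-W(t_j))}$ are i.i.d. with mean $r_k$: then $D_l$ is a martingale difference, with respect to the filtration generated by the increments, satisfying $|D_l|\le 2$. Solving the recursion expresses $S_k^n$ as $(1-r_k)^{-1}$ times the martingale sum $\sum_{j}D_j$ plus lower-order terms, whereupon the Burkholder--Davis--Gundy inequality together with $|D_l|\le 2$ gives the claimed $n^{r}$ growth, uniformly in $k\neq 0$ because $(1-r_k)^{-1}\le(1-r_{*})^{-1}$. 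Finally, Minkowski's inequality in $L^{2r}(\Omega';\ell^2_k)$ converts this modewise bound into $\mathbb{E}\|B_n\|_{H^s}^{2r}=O(n^{-r})$, and combining the two pieces yields $\mathbb{E}\|m_n'-\langle u\rangle\|_{H^s}^{2r}=O(n^{-r})$. The Markov/Borel--Cantelli argument of Theorem~\ref{limitthm1}, with $r$ chosen so that $r(1-2\theta)>1$, then delivers the $o(n^{-\theta})$ almost-sure rate for every $\theta<1/2$.
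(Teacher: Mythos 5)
Your computation of the $L^{2}(\Omega';H^{s})$ rate is essentially identical to the paper's: the paper also chooses $M=\langle u\rangle$, writes $\hat{\mathbf{\xi}}_l(k)=e^{2\pi i k\cdot\varepsilon W(t_{l+1})}(u,\phi_k)$, evaluates $\mathbb{E}\bigl[\sum_{l,l'}e^{2\pi i\varepsilon k\cdot(W(t_{l+1})-W(t_{l'+1}))}\bigr]=\sum_{l,l'}e^{-2\pi^{2}\varepsilon^{2}|k|^{2}\Delta t|l-l'|}$, and bounds the resulting geometric double sum by $\frac{e^{2\pi^{2}\varepsilon^{2}\Delta t}+1}{e^{2\pi^{2}\varepsilon^{2}\Delta t}-1}\,n\,|(u,\phi_k)|^{2}$ uniformly in $k\neq 0$ using $|k|^{2}\geq 1$, exactly as you do with $r_k$ and $r_{*}$; inserting this into the master inequality~(\ref{eq:coreinequality}) with $\delta=1$ gives the $O(n^{-1/2})$ rate. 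Where you genuinely depart from the paper is the almost-sure statement. The paper disposes of it by invoking the generic reduction stated after the proof of Theorem~\ref{limitthm1} (``once the $L^{2}(\Omega')$ rate is established, Borel--Cantelli gives the a.s.\ rate''), but that reduction rests on Lemma~\ref{gaussianmoment}, which requires $m_n'-M$ to be Gaussian; here the phases $e^{2\pi i\varepsilon k\cdot W(t_{l+1})}$ are nonlinear functionals of $W$, so $m_n'-\langle u\rangle$ is not Gaussian and the lemma does not apply verbatim. Your decomposition into a Gaussian observation-noise part (handled by Lemma~\ref{gaussianmoment} as before) and a model-error part whose higher moments you control through the autoregressive/martingale identity $Z_l=r_kZ_{l-1}+D_l$ with $|D_l|\leq 2$, Burkholder--Davis--Gundy, and Minkowski's inequality across modes, supplies precisely the uniform bound $\mathbb{E}\|m_n'-\langle u\rangle\|_{H^{s}}^{2r}=O(n^{-r})$ that the Borel--Cantelli step actually needs. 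So your argument is correct and, on this point, more careful than the paper's: it closes a gap the paper leaves implicit, at the cost of the extra martingale machinery.
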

\begin{proof}
See \ref{app:B}.
\end{proof}

We do not state the corresponding theorem on the mean of the filtering distribution
as $(m')^n$ converges to the same constant $\langle u\rangle$ with the same rate 
shown in Equations~(\ref{eq:limitthm4eq}).

\begin{remark}
  \label{generallimit}
In Theorem~\ref{limitthm4}
the law of $\varepsilon W(t)$ converges weakly to 
a uniform distribution on the torus
by the L\'{e}vy-Cram\'{e}r
continuity theorem~\cite{Varadhan}.
The limit $\langle u\rangle$ is the average of $u$
with respect to this measure.
This result may be generalized to consider the
case where 
$\int^t_0 \left( c(s) - c'(s;\omega') \right) \,ds$ 
converges weakly to the measure $\nu$ as $t \to \infty$. 
Then $m_n'(\cdot) \to \int u(\cdot+y)\,d\nu(y)$ as $n \to \infty$
in the same norms as used in Theorem~\ref{limitthm4} but,
in general, with no rate. 
The key fact underlying the result is
\begin{eqnarray}
\label{eq:sLLN}
\frac{1}{n}\sum_{l=0}^{n-1}
e^{2\pi i k \cdot \int^{t_{l+1}}_0 \left( c(s) - c'(s;\omega') \right) \,ds}
-
\int u(\cdot+y)\,d\nu(y)
= o(1)
\quad \Omega'-a.s.,
\end{eqnarray}
as $n \to \infty$,
which follows
from the strong law of large numbers 
and the Kolmogorov zero-one law~\cite{Varadhan}.
Depending on the process
$\int^t_0 \left( c(s) - c'(s;\omega') \right) \,ds$,
an algebraic convergence rate can be determined 
if we have higher order convergence result for the corresponding Equation~(\ref{eq:sLLN}).
This viewpoint may be used to provide a common framework
for all the limit theorems in this paper.
\end{remark}

\section{Numerical Illustrations}
\label{BIPsection}
\subsection{Algorithmic Setting}
The purpose of this section is twofold: first to
illustrate the preceding theorems with numerical experiments;
and secondly to show that relaxing the statistical
model can avoid some of the lack of consistency problems
that the theorems highlight. All of the numerical
results we describe are based on using the
Equations~(\ref{eq:generalmodel}), (\ref{eq:discreteobservation}),
with ${\mathcal L}=c\cdot\nabla$ for some constant 
wave velocity $c$, so that the
underlying dynamics is given by 
Equation~(\ref{eq:specificmodel}).
The data is generated by (\ref{eq:generaltruthmodel}),
(\ref{eq:discretetruthobservation}) with
${\mathcal L}'=c'(t)\cdot\nabla$, for a variety of choices
of $c'(t)$ (possibly random), 
and in subsection \ref{SIC} we
illustrate Theorems \ref{limitthm1}, 
\ref{limitthm2}, \ref{limitthm3} and \ref{limitthm4}.
In subsection \ref{SWS} we will also describe a 
numerical method in which
the state of the system {\em and} the wave velocity are
learnt by combining the data and statistical model.
Since this problem is inherently non-Gaussian we
adopt from the outset a Bayesian approach 
which coincides with the (Gaussian) filtering or smoothing
approach when the wave velocity is fixed, but is sufficiently 
general to also allow for the wave velocity to be part of the 
unknown state of the system.
In both cases we apply functionspace MCMC methods \cite{Acta10}
to sample the distribution of interest.
Note, however, that the purpose of this section is {\em not}
to determine the most efficient numerical methods, but
rather to study the properties of the statistical
distributions of interest.

For fixed wave velocity $c$ the statistical model 
(\ref{eq:generalmodel}), (\ref{eq:discreteobservation})
defines a probability distribution 
$\mathbb{P}(v_0, Y_n \vert c).$
This is a Gaussian distribution and the conditional
distribution $\mathbb{P}(v_0 \vert Y_n,c)$ is given
by the measure $\mu_n={\mathcal N}(m_n,{\mathcal C}_n)$ 
studied in sections \ref{Kalman}, \ref{MCTI} and
\ref{tdme}. 
In our first set of numerical results, in subsection \ref{SIC},
the wave velocity is considered known.  We sample
$\mathbb{P}(v_0 \vert Y_n,c)$ using the functionspace
random walk from \cite{CDS10}. 
In the second set of results, in subsection \ref{SWS},
the wave velocity is considered as an unknown constant. 
If we place a prior measure $\rho(c)$ 
on the wave velocity then we may define 
$\mathbb{P}(c,v_0,Y_n)=\mathbb{P}(v_0, Y_n \vert c)\rho(c).$
We are then interested in the conditional distribution
$\mathbb{P}(c,v_0 \vert Y_n)$ which is non-Gaussian.
We adopt a Metropolis-within-Gibbs 
approach~\cite{geweke2001bayesian, roberts2006harris}
in which we sample alternately
from $\mathbb{P}(v_0 \vert  c,Y_n)$, 
which we do as in subsection \ref{SIC},
and $\mathbb{P}(c \vert v_0, Y_n)$, which we sample
using a finite dimensional Metropolis-Hastings method.

Throughout the numerical simulations we represent
the solution of the wave equation on a grid of $2^5
\times 2^5$ points, 
and observations are also taken on this grid. 
The observational noise is white (uncorrelated)
with variance $\sigma^2=10^{-4}$ at each grid point.
The continuum limit of such a covariance operator 
does not satisfy  Assumptions~\ref{ass1}, but is
used to illustrate the fact that the theoretical results
can be generalized to such observations. Note also that
the numerical results are performed with model error
so that the aforementioned distributions are sampled 
with $Y_n=Y_n'$ from (\ref{eq:generaltruthmodel}),
(\ref{eq:discretetruthobservation}).

\subsection{Sampling the initial condition with model error}
\label{SIC}
Throughout we use the wave velocity
\begin{eqnarray}
\label{eq:truec}
c=(-0.5, -1.0),
\end{eqnarray}
in our statistical model.
The true initial condition used to generate the data is 
\begin{eqnarray}
u(x_1,x_2) = \sum_{k_1,k_2=1}^3 \sin(2 \pi k_1 x_1) + \cos(2 \pi k_2 x_2).
\label{eq:trueic}
\end{eqnarray}
This function is displayed in Figure~\ref{fig:u}.
As prior on $v_0$ we choose the Gaussian
$\mathcal{N}\left( 0, (-\triangle)^{-2} \right)$ 
where the domain of $-\triangle$ is $H^2(\mathbf{T}^2)$
with constants removed, so that it is positive.
We implement the MCMC method to sample 
from $\mathbb{P}(v_0 | c,Y_n =Y_n')$ 
for a number of different data $Y_n'$, corresponding to
different choices of $c'=c'(t,\omega')$. We calculate 
the empirical mean of $\mathbb{P}(v_0 | c,Y_n =Y_n')$,
which approximates $\mathbb{E}(v_0 | c, Y_n =Y_n').$
The results are shown in
Figures~\ref{fig:postmean}--\ref{fig:postmean_dc_noisy}.
In all cases the Markov chain is
burnt in for $10^6$ iterations, and this
transient part of the simulation is not used to compute draws
from the conditioned measure $\mathbb{P}(v_0 | c,Y_n =Y_n')$.
After the burn in we proceed
to iterate a further $10^7$ times and use this
information to compute the corresponding moments.
The data size $n$ is chosen sufficiently large that this
distribution is approximately a Dirac measure.

In the perfect model scenario ($c=c'$),
the empirical mean 
shown in Figure~\ref{fig:postmean},
should fully recover the true initial condition $u$
from Theorem~\ref{limitthm1}.
Comparison with Figure~\ref{fig:u} shows that this
is indeed the case, 
illustrating Corollary~\ref{almosteverwhere}.
We now demonstrate the effect of model error in the
form of a constant shift in the wave velocity: 
Figure~\ref{fig:postmean_dc0p5} and Figure~\ref{fig:postmean_dc_irr}
show the empirical means when $c-c' = (1/2, 1/2) \in \mathbb{Q} \times \mathbb{Q}$
and $c-c' =(1/e, 1/\pi) \in \mathbb{R}\backslash \mathbb{Q} \times \mathbb{R} \backslash \mathbb{Q}$,
respectively.
From Theorem~\ref{limitthm2}, the
computed empirical distribution should be close to, respectively, 
$\mathcal{F}_{(2,2)}u$ comprising only the mode 
$(k_1,k_2)=(2,2)$ from (\ref{eq:trueic}), 
or $\langle u\rangle =0$; this is indeed the case.

If we choose $c'(t)$ satisfying $\int_{0}^{\infty} \left( c-c'(s) \right) \,ds = (1/2, 1/2)$,
then Theorem~\ref{limitthm3} tells us that
Figure~\ref{fig:postmean_int_dc0p5} should be close to
a shift of $u$ by $(1/2,1/2)$, and this is exactly
what we observe.
In this case, we know from Theorem~\ref{NEW3}
that although the smoother is in error,
the filter should correctly recover the true $v_n'$ for large
$n$.  To illustrate this we compute
$\| \mathbb{E}(v_n | c,Y_n=Y_n') -v_n' \|_{L^2(\mathbf{T}^2)}$
as a function of $n$ and depict it in
Figure~\ref{fig:filterimprove}.
This shows convergence to $0$ as predicted.
To obtain a rate of convergence,
we compute the gradient of a $\log$-$\log$ plot of
Figure~\ref{fig:filterrate}.
We observe the rate of convergence is close to
$O(n^{-2})$. 
Note that this is higher than the theoretical bound of
$O(n^{-\phi})$,
with $\phi=1/2 \wedge \beta$, given in Equation~(\ref{eq:disfilter-b});
this suggests that our convergence theorems do not have sharp rates.

Finally, we examine the random $c'(t,\omega')$ cases.
Figure~\ref{fig:postmean_dc_noisy} shows the
empirical mean when
$c'(t;\omega')$ is chosen such that
$$\int_{0}^{t} \left( c-c'(s;\omega') \right) \,ds = W(t)$$
where $W(t)$ is a standard Brownian motion.
Theorem~\ref{limitthm4} tells us that the
computed empirical distribution 
should have mean close to $\langle u \rangle$,
and this is again the case.

\setcounter{subfigure}{-1}
\begin{figure}[htp]
  \centering
  \subfigure{
    \includegraphics[type=eps, ext=.eps, read=.eps, width=\textwidth]{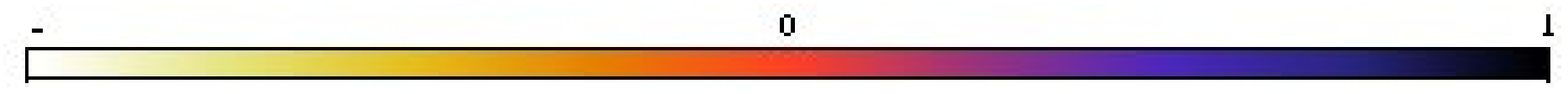}}
  \subfigure[$u(x_1,x_2)$]{
    \label{fig:u}
    \includegraphics[type=eps, ext=.eps, read=.eps, width=0.4\textwidth]{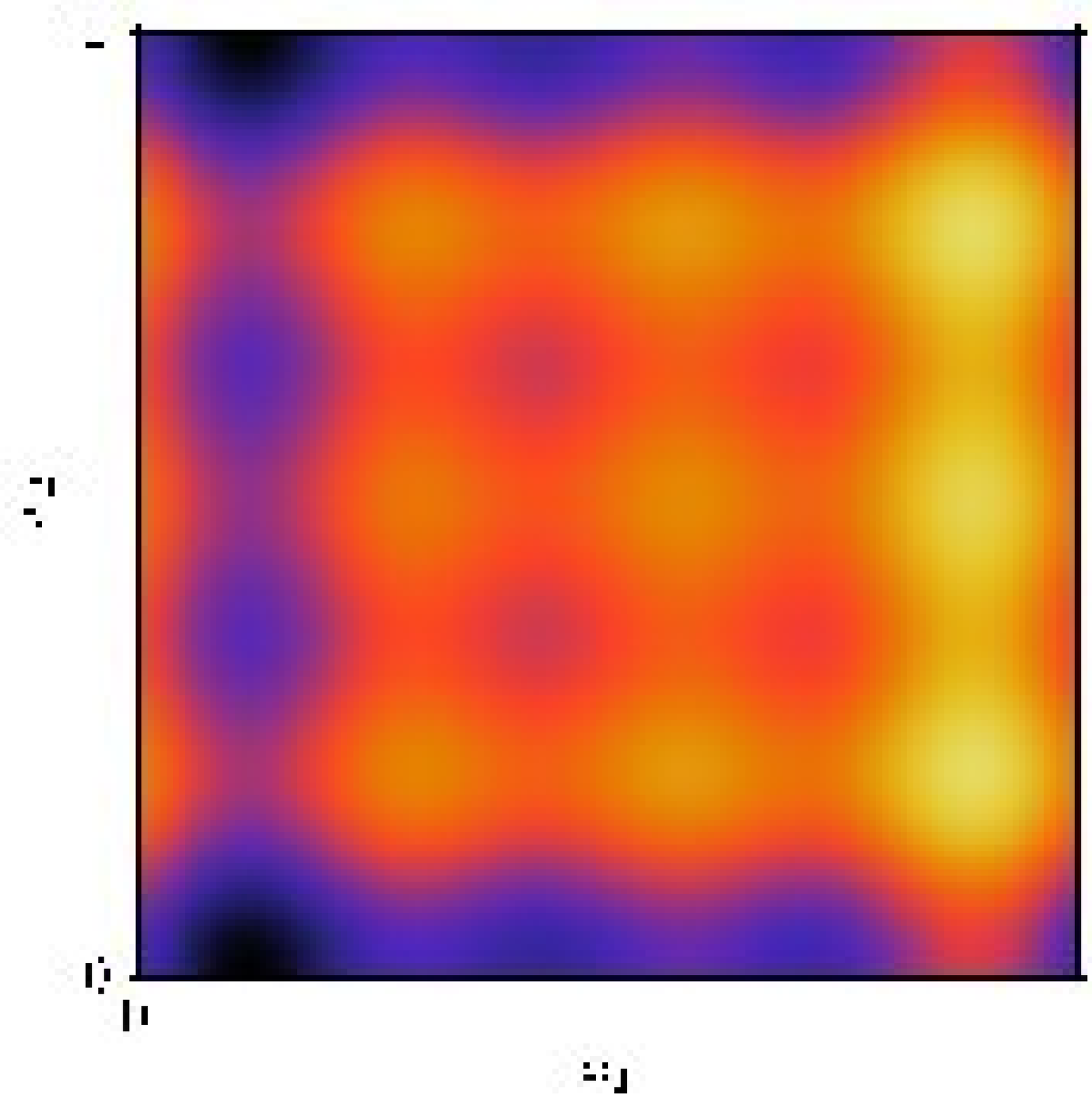}}
  \subfigure[$\delta c = (0, 0)$]{
    \label{fig:postmean}
    \includegraphics[type=eps, ext=.eps, read=.eps, width=0.4\textwidth]{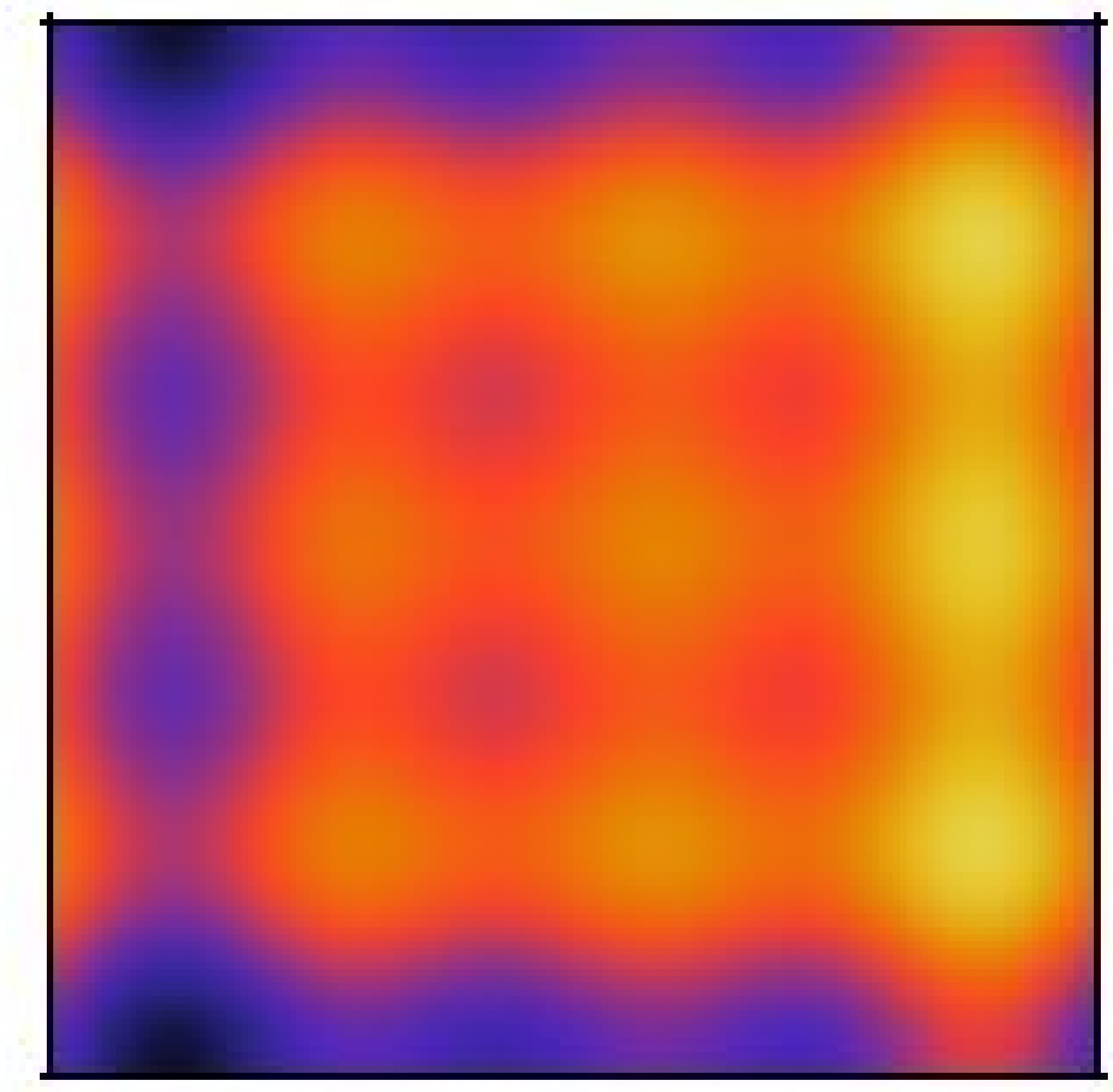}} \\
  \subfigure[$\delta c = (1/2, 1/2)$]{
    \label{fig:postmean_dc0p5}
    \includegraphics[type=eps, ext=.eps, read=.eps, width=0.4\textwidth]{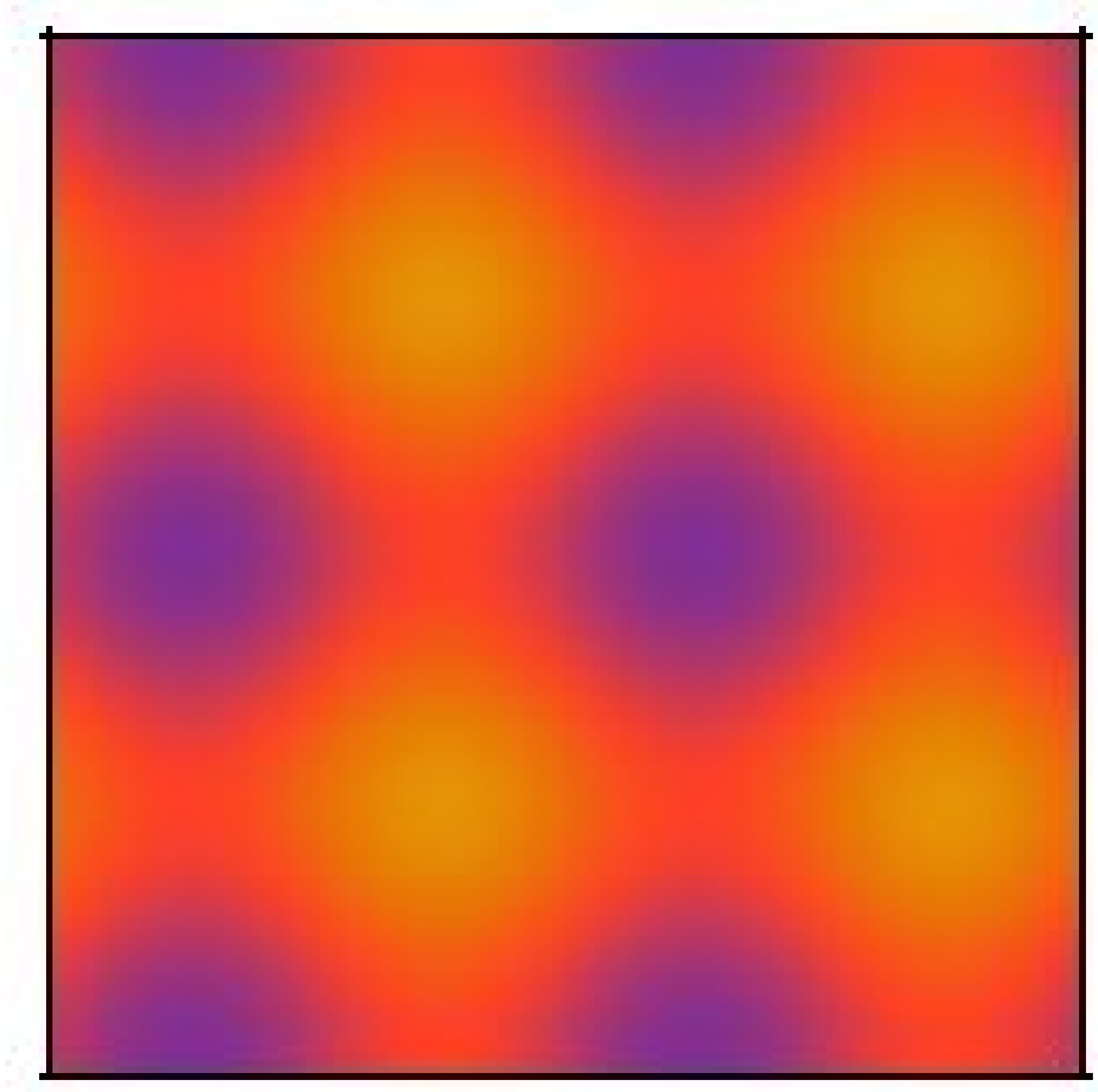}}
  \subfigure[
  	$\delta c = (1/e, 1/\pi)
  	$
  	]{
    \label{fig:postmean_dc_irr}
    \includegraphics[type=eps, ext=.eps, read=.eps, width=0.4\textwidth]{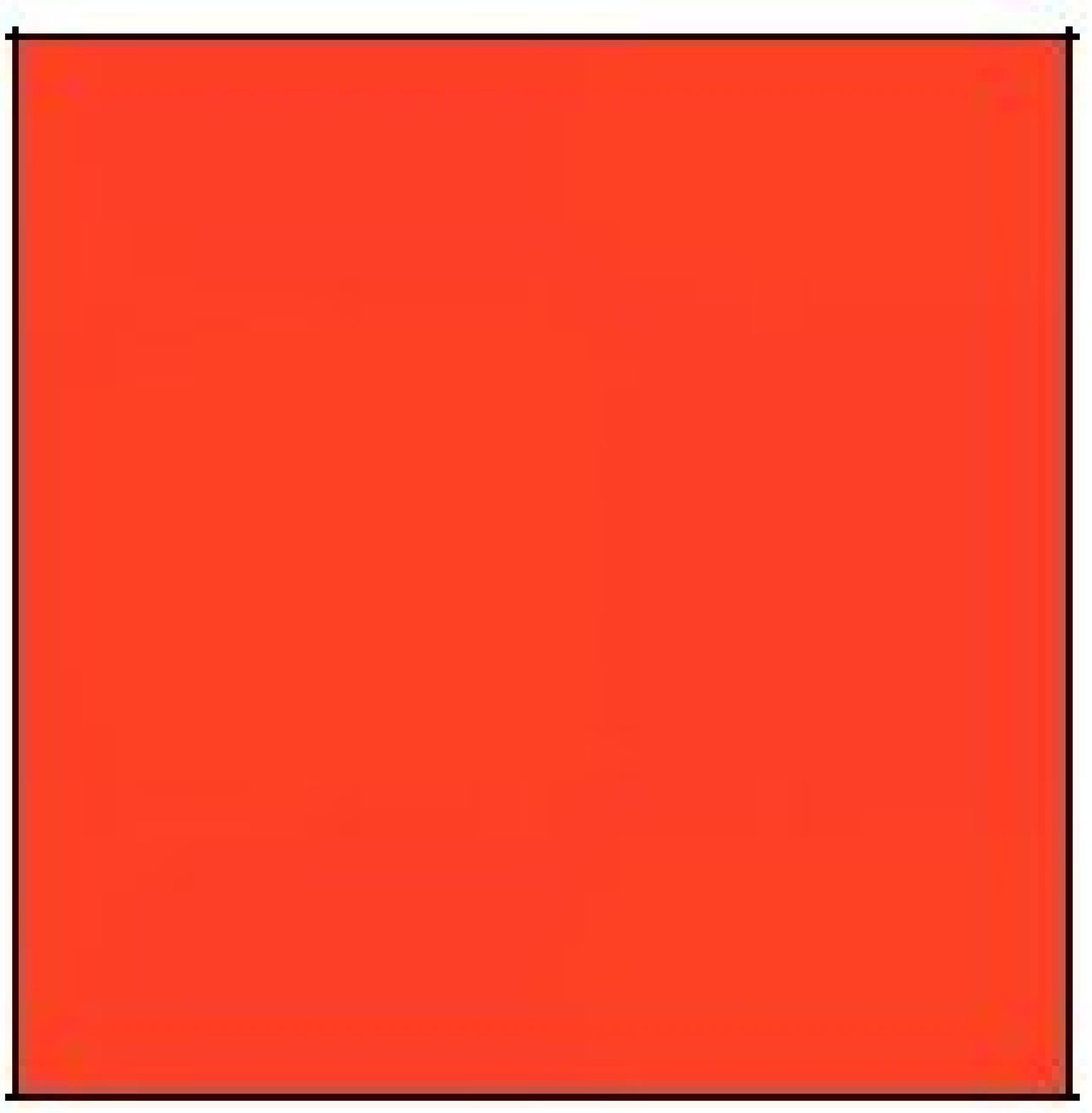}} \\
  \subfigure[$\int_0^{\infty} (c-c'(s)) \,ds = (1/2, 1/2)$]{
    \label{fig:postmean_int_dc0p5}
    \includegraphics[type=eps, ext=.eps, read=.eps, width=0.4\textwidth]{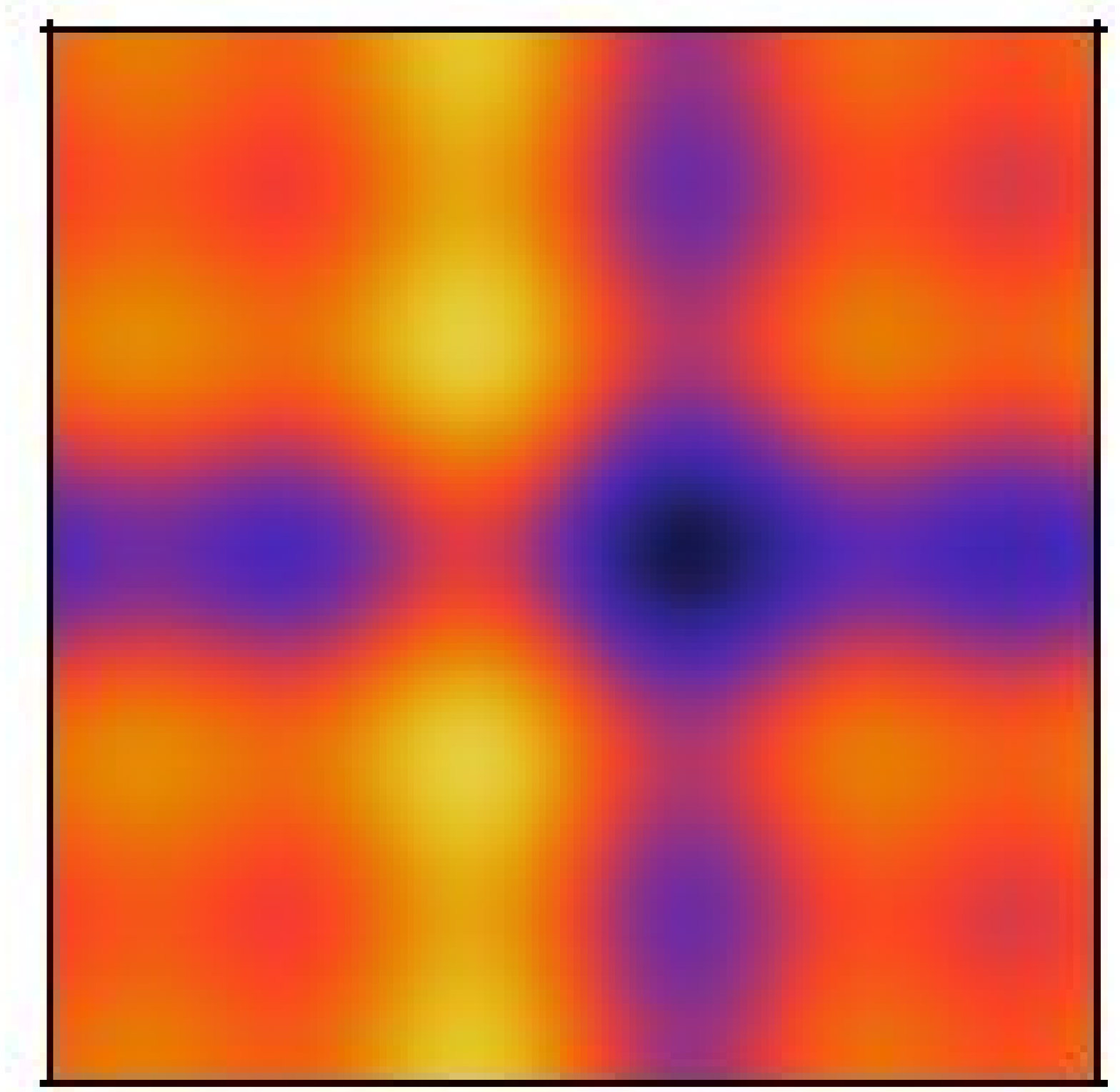}}
  \subfigure[
	$\int_{0}^{t} \left( c-c'(s;\omega') \right) \,ds = W(t)$
  ]{
    \label{fig:postmean_dc_noisy}
    \includegraphics[type=eps, ext=.eps, read=.eps, width=0.4\textwidth]{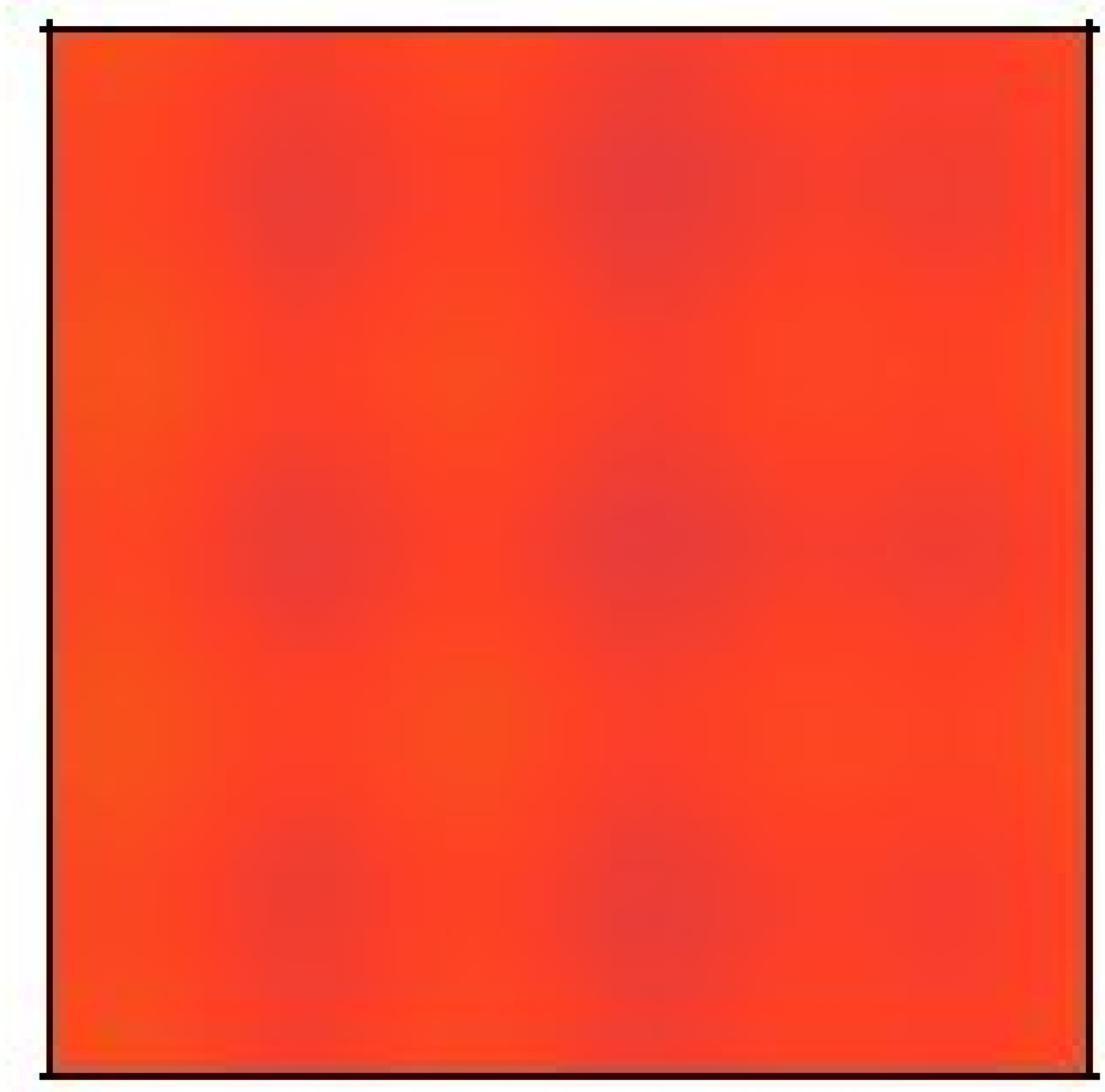}}
  \caption{
  Figure \ref{fig:u} is the true initial condition.
Figures \ref{fig:postmean} --
\ref{fig:postmean_dc_noisy} show the desired
empirical mean of
the smoothing
$\mathbb{P}(v_0 | Y_n=Y_n')$ 
for $\delta c = (0, 0),
\delta c = (1/2,1/2), \delta c \in \mathbb{R} \setminus \mathbb{Q} \times \mathbb{R} \setminus
\mathbb{Q}, \int_0^{\infty} \delta c \,dt = (1/2, 1/2)$ and $\delta c = \dot{W}$ respectively.
}
  \label{fig:main}
\end{figure}

\begin{figure}[htp]
  \centering
  \subfigure[]{
    \label{fig:filterimprove}
    \includegraphics[type=eps, ext=.eps, read=.eps, width=0.4\textwidth]{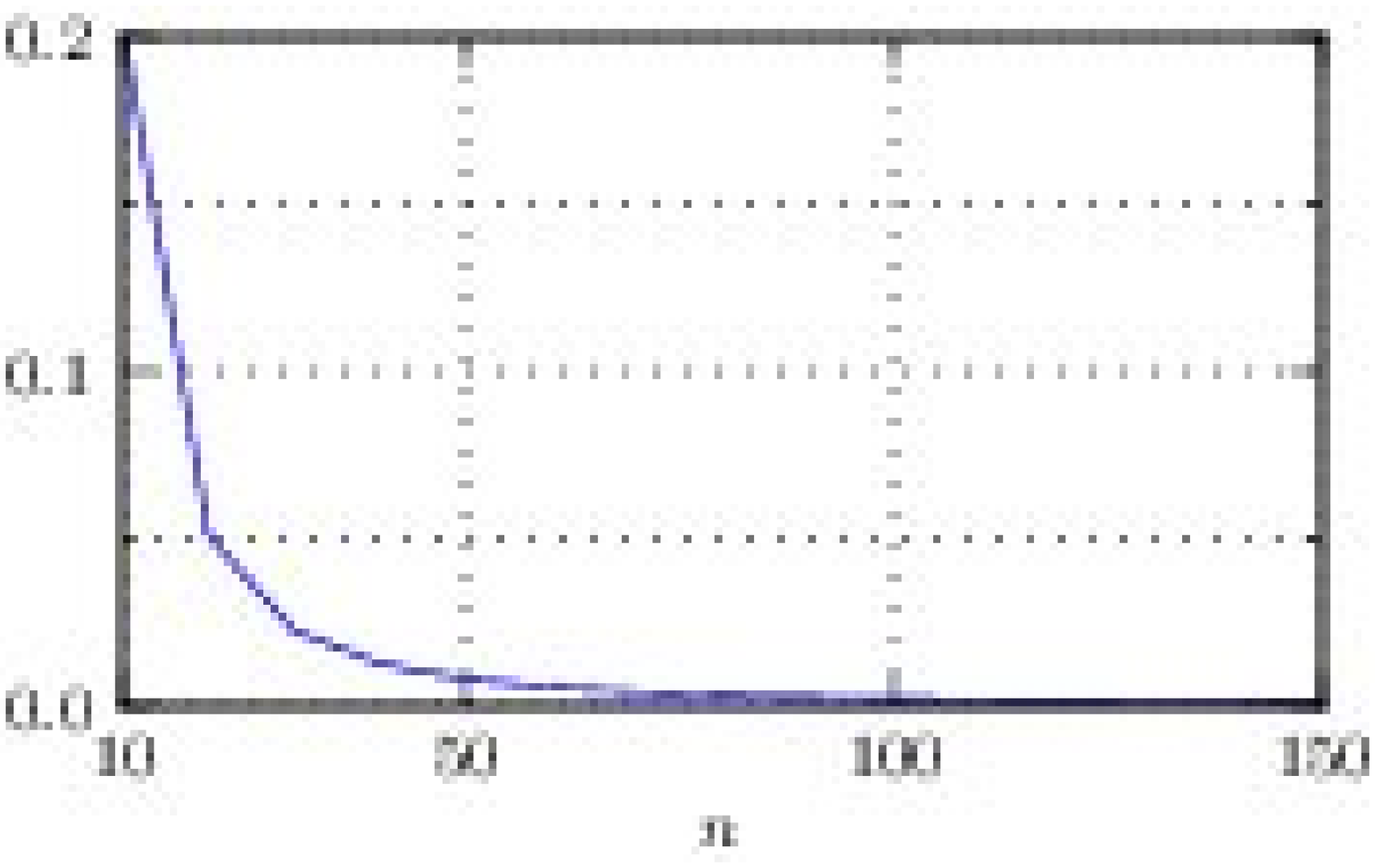}}
    \subfigure[]{
    \label{fig:filterrate}
    \includegraphics[type=eps, ext=.eps, read=.eps, width=0.4\textwidth]{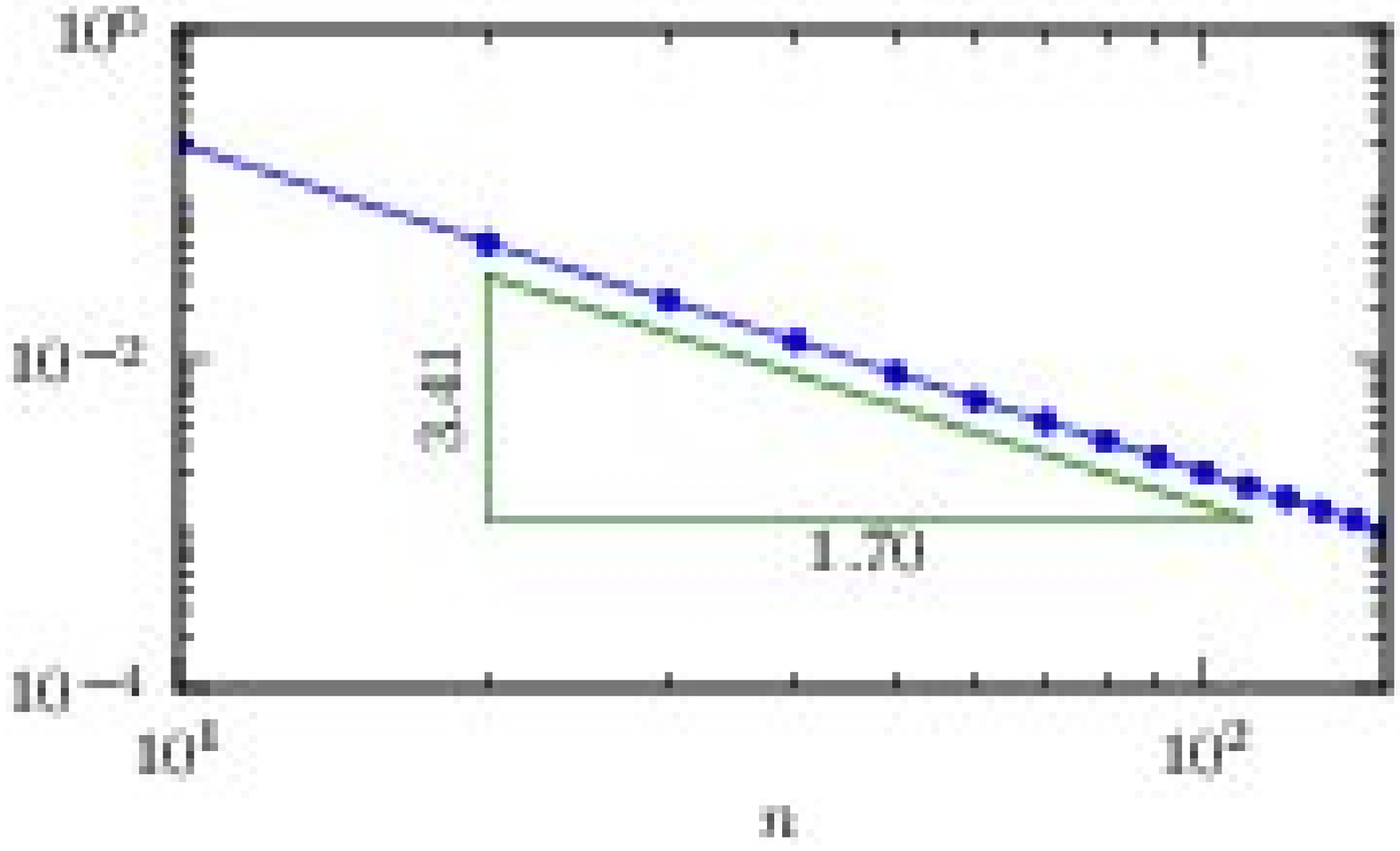}}
	\caption{
	Plot \ref{fig:filterimprove} shows $\| \mathbb{E} (v_n | c, Y_n=Y_n') - v'_{n}
	\|^2_{L^{2}(\mathbb{T}^2)}$ as a function of $n$, when $\int_0^{\infty} \delta c(s) \,ds = (1/2,
	1/2)$. Its $\log$-$\log$ plot, along with a least
squares fit, is depicted in Plot \ref{fig:filterrate}, demonstrating quadratic convergence.
}
\end{figure}

\subsection{Sampling the wave velocity and initial condition}
\label{SWS}

The objective of this subsection is to show that
the problems caused by model error in the
form of a constant shift to the wave velocity can be overcome
by sampling $c$ and $v_0$. We generate data from
(\ref{eq:generaltruthmodel}),
(\ref{eq:discretetruthobservation}) with
$c'=c$ given by (\ref{eq:truec}) and initial condition
(\ref{eq:trueic}). We assume that neither the wave velocity
nor the initial condition are known to us, and we
attempt to recover them from given data.

The desired conditional distribution 
is multimodal with respect to $c$ -- recall that
it is non-Gaussian --
and care is required to seed the chain close
to the desired value in order to avoid metastability.
Although the algorithm does not have access to
the true signal $v'_n,$ we do have noisy observations
of it: $y'_n$.  Thus it is natural to choose as initial 
$c=c^*$ for the Markov chain the value which minimizes
\begin{eqnarray}
\label{eq:seed}
\sum_{j=1}^{n-1} \left\| \log\left( \frac{(v'_{j+1},\phi_k)}{(v'_{j},\phi_k)} \right)
-  2\pi i k \cdot c \Delta t \right\|^2.
\end{eqnarray}
Because of the observational noise this estimate is
more accurate for small values of $k$ and we
choose
$k=(1,0)$
to estimate $c_1$ and $k=(0,1)$ to estimate $c_2$.

Figure~\ref{fig:c-samples} shows the marginal distribution
for $c$ computed with
four different values of the
data size $n$, in all cases with the Markov chain
seeded as in (\ref{eq:seed}).
The results show that the marginal 
wave velocity distribution 
$\mathbb{P}(c |Y_n)$ converges
to a Dirac on the true value as the amount of data is
increased. Although not shown here, the initial condition
is also converging to a Dirac on the true value
(\ref{eq:truec}) in this limit.

We round-off this subsection by mentioning related
published literature.
First we mention that, in a setting similar to ours,
a scheme to approximate the true wave velocity is
proposed which uses parameter estimation within 3D Var
for the linear advection equation  with constant velocity \cite{smith2009variational},
and its hybrid with the EnKF for the non-constant velocity case \cite{smith2010hybrid}.
These methodologies deal with the problem entirely in finite
dimensions but are not limited to the linear dynamics.
Secondly we note that, although a constant wave velocity
parameter in the linear advection equation is a useful
physical idealization in some cases, it is a very rigid
assumption, making the data assimilation
problem with respect to this parameter quite hard; 
this is manifest in the large number of
samples required to estimate this constant parameter. 
A notable, and desirable,
direction in which to extend this work numerically is to consider
the time-dependent wave velocity as presented in Theorems
\ref{limitthm3}--\ref{limitthm4}. For efficient filtering techniques
to estimate time-dependent parameters, the reader is directed to
\cite{cohn1997introduction, dee1998data, baek2006local, gershgorin2010test}.

\begin{figure}[htp]
  \centering
  \subfigure[
  $n=10$
  ]{
    \label{fig:csamples_10}
    \includegraphics[type=eps, ext=.eps, read=.eps, width=0.4\textwidth]{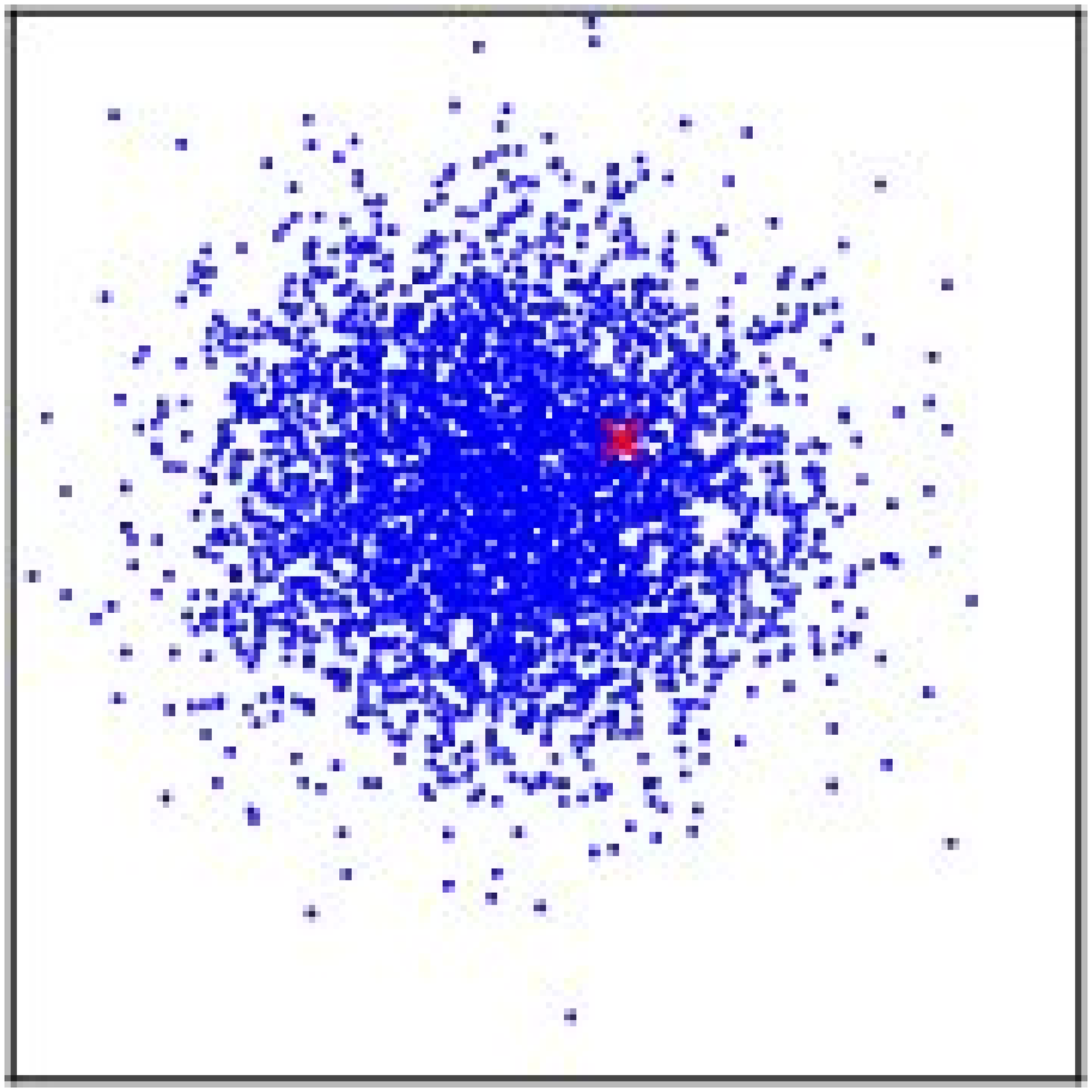}}
  \subfigure[
  $n=50$
  ]{
    \label{fig:csamples_50}
    \includegraphics[type=eps, ext=.eps, read=.eps, width=0.4\textwidth]{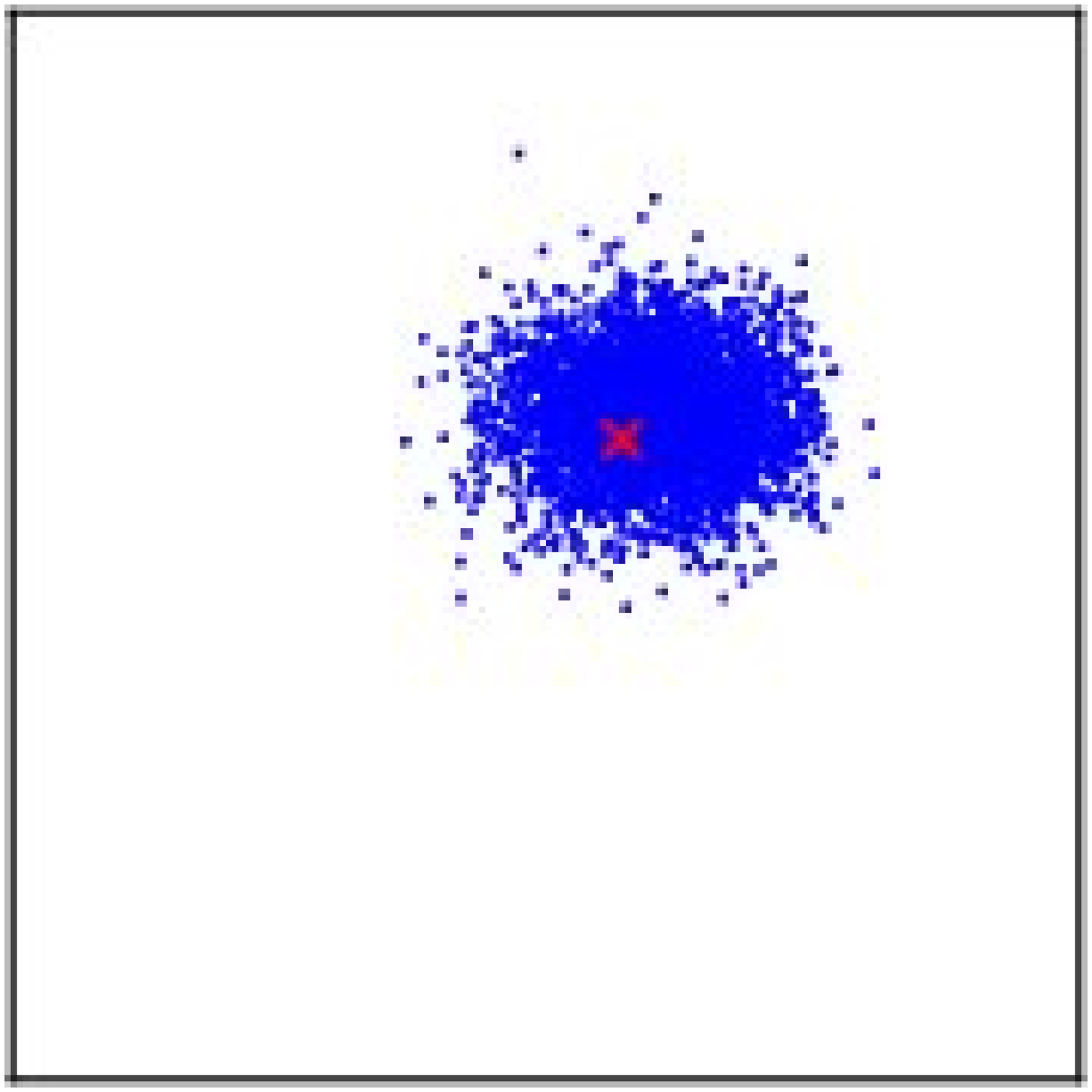}} \\
  \subfigure[
  $n=100$
  ]{
    \label{fig:csamples_100}
    \includegraphics[type=eps, ext=.eps, read=.eps, width=0.4\textwidth]{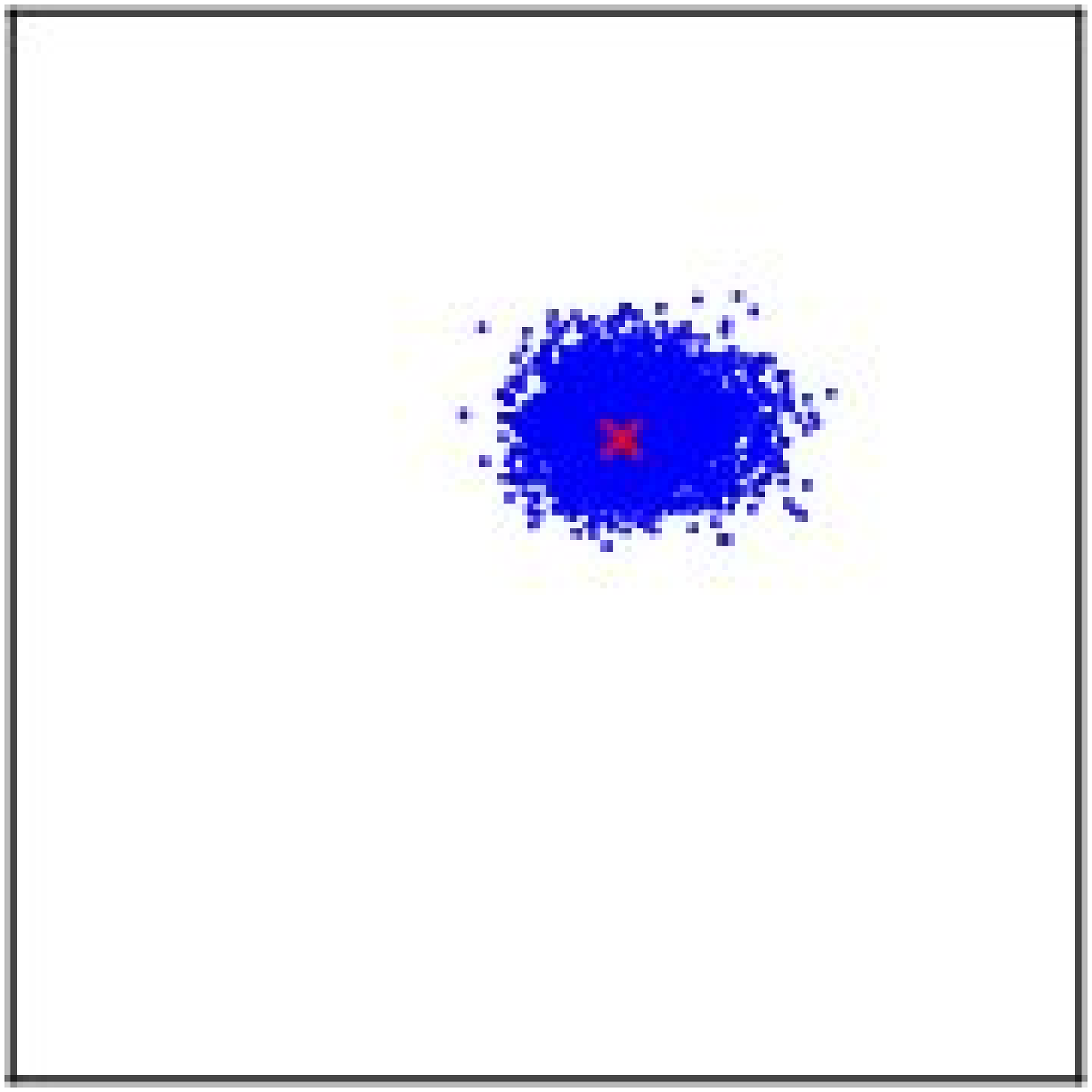}}
  \subfigure[
  $n=1000$
  ]{
    \label{fig:csamples_1000}
    \includegraphics[type=eps, ext=.eps, read=.eps, width=0.4\textwidth]{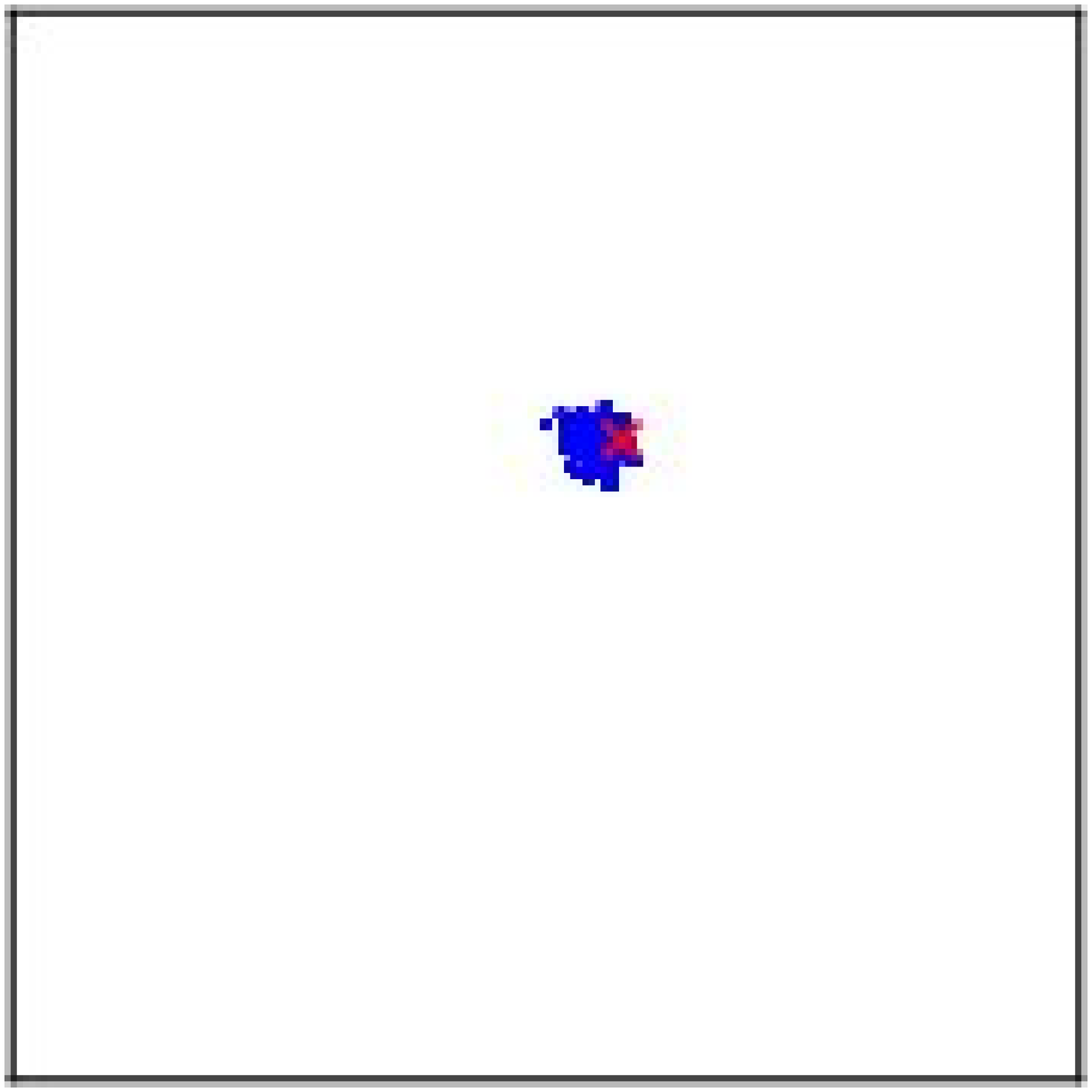}}
  \caption{
  The marginal distribution of $\mathbb{P}(c,v_0 |Y_n)$ with respect to $c$ are depicted on the
square $1.4\times10^{-4}$ by $\times 10^{-4}$.
The red cross marks the true wave velocity $c=(-0.5,-1.0)$.
  }
  \label{fig:c-samples}
\end{figure}

\section{Conclusions}
In this paper,
we study an infinite dimensional state estimation problem in
the presence of model error.
For the statistical model of advection equation on a torus,
with noisily observed functions in discrete time,
the large data limit of the filter and the smoother
both recover the truth in the perfect model scenario. 
If the actual wave velocity differs from the
true wave velocity in a time-integrable fashion then
the filter recovers the truth, but the smoother is in error
by a constant phase shift, determined by the integral
of the difference in wave velocities.
When the difference in wave velocities is constant neither
filtering nor smoothing recovers the truth in the
large data limit. 
And when the difference in wave velocities is
a fluctuating random field, however small, neither
filtering nor smoothing recovers the truth in the
large data limit.

In this paper we consider the dynamics as a hard
constraint, and do not allow for the addition of
mean zero Gaussian noise to the time evolution
of the state. Adding such noise to the model is
sometimes known as a weak constraint
approach in the data assimilation community and
the relative merits of hard and weak constraint
approaches are widely debated; see \cite{bennett2002inverse, apte2008data} for
discussion and references. New techniques of
analysis would be required to study the weakly
constrained problem, because the inverse covariance
does not evolve linearly as it does for the hard
constraint problem we study here. We leave this for
future study.

There are a number of other ways in which the analysis in this
paper could be generalized, in order to obtain
a deeper understanding of filtering methods for high
dimensional systems. 
These include: (i) the study of dissipative model dynamics;
(ii) the study of nonlinear wave propagation problems;
(iii) the study of Lagrangian rather than Eulerian data.
Many other generalizations are also possible.  
For nonlinear systems, the key computational
challenge is to find filters which can be justified,
either numerically or analytically, and which are
computationally feasible to implement. There is already
significant activity in this direction, and studying the
effect of model/data mismatch will form an important
part of the evaluation of these methods. 

\ack The authors would like to thank the following institutions for
financial support: NERC, EPSRC, ERC and ONR.
The authors also thank The Mathematics Institute and Centre for Scientific 
Computing at Warwick University for supplying valuable computation time.

\appendix
\renewcommand{\theappxlem}{A.\arabic{appxlem}}
\section{Basic Theorems on Gaussian Measures}
\label{app:A}
Suppose the probability measure $\mu$ is defined on the Hilbert space $\mathcal{H}$.
A function $m \in \mathcal{H}$ is called the mean of $\mu$ if,
for all $\ell$ in the dual space of linear functionals on $\mathcal{H}$,
\begin{eqnarray*}
\ell(m)= \int_{\mathcal{H}}\ell(x) \mu(dx),
\end{eqnarray*}
and a linear operator $\mathcal{C}$ is called 
the covariance operator if
for all $k, \ell$ in the dual space of $\mathcal{H}$,
\begin{eqnarray*}
k\left( \mathcal{C} \ell \right)= \int_{\mathcal{H}} k(x-m)\ell(x-m) \mu(dx).
\end{eqnarray*}
In particular,
a measure $\mu$ is called Gaussian if
$\mu \circ \ell^{-1} = \mathcal{N}\left( m_\ell, \sigma_\ell^2 \right)$
for some $m_\ell, \sigma_\ell \in \mathbb{R}$.
Since the mean and covariance operator completely determine a Gaussian measure,
we denote a Gaussian measure with mean $m$ and covariance operator $\mathcal{C}$
by $\mathcal{N}(m,\mathcal{C})$.

The following lemmas, all of which can be found in~\cite{Prato92},
summarize the properties of Gaussian measures which we require for this paper.

\begin{appxlem}
\label{gaussiancovop}
If $\mathcal{N}(0, \mathcal{C})$ is a Gaussian measure on a Hilbert space $\mathcal{H}$,
then $\mathcal{C}$ is a self-adjoint, positive semi-definite nuclear operator on $\mathcal{H}$.  
Conversely, if $m \in \mathcal{H}$ and $\mathcal{C}$ is a self-adjoint, positive semi-definite,
nuclear operator on $\mathcal{H}$, then there is a Gaussian measure
$\mu = \mathcal{N}(m,\mathcal{C})$ on $\mathcal{H}$.  
\end{appxlem}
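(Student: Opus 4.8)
The plan is to treat the two implications separately, relying on the spectral theorem in the converse direction and on a Gaussian integrability result in the forward direction. For the forward implication, suppose $\mu = \mathcal{N}(0,\mathcal{C})$ is a Gaussian measure on $\mathcal{H}$. Self-adjointness and positive semi-definiteness are immediate from the defining identity $k(\mathcal{C}\ell) = \int_{\mathcal{H}} k(x)\ell(x)\,\mu(dx)$ for $\mu$ centred: symmetry of the integrand in $k$ and $\ell$ gives $k(\mathcal{C}\ell) = \ell(\mathcal{C}k)$, hence $\mathcal{C} = \mathcal{C}^*$, while the choice $k = \ell$ gives $\ell(\mathcal{C}\ell) = \int_{\mathcal{H}} \ell(x)^2\,\mu(dx) \ge 0$.

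The substantive point is nuclearity. Fixing an orthonormal basis $\{e_n\}$ of $\mathcal{H}$ and identifying each $e_n$ with the functional $x \mapsto (x,e_n)$, I would compute the trace as
\begin{eqnarray*}
\sum_n (\mathcal{C}e_n,e_n)
= \sum_n \int_{\mathcal{H}} (x,e_n)^2\,\mu(dx)
= \int_{\mathcal{H}} \sum_n (x,e_n)^2\,\mu(dx)
= \int_{\mathcal{H}} \|x\|^2\,\mu(dx),
\end{eqnarray*}
where the interchange of sum and integral is justified by Tonelli's theorem since the summands are nonnegative, and the final equality is Parseval's identity. Thus nuclearity reduces to finiteness of the second moment $\int_{\mathcal{H}}\|x\|^2\,\mu(dx)$. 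This is where the real content sits, and I expect it to be the main obstacle: it does \emph{not} follow from the one-dimensional Gaussianity of each projection $\mu\circ\ell^{-1}$ alone, but requires a genuine integrability theorem for Gaussian measures on an infinite-dimensional space. I would invoke Fernique's theorem, which guarantees $\int_{\mathcal{H}} e^{\alpha\|x\|^2}\,\mu(dx) < \infty$ for some $\alpha > 0$ and in particular yields finiteness of all polynomial moments, closing this direction.

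For the converse, I would construct the measure explicitly from the spectral data of $\mathcal{C}$. Since $\mathcal{C}$ is self-adjoint, positive semi-definite and nuclear, the spectral theorem provides an orthonormal basis $\{e_n\}$ of eigenvectors with eigenvalues $\lambda_n \ge 0$ satisfying $\sum_n \lambda_n < \infty$. Taking $\{\xi_n\}$ to be i.i.d. standard real Gaussians on some probability space, I would set
\begin{eqnarray*}
X = m + \sum_n \sqrt{\lambda_n}\,\xi_n e_n
\end{eqnarray*}
and first verify that the series converges in $L^2(\Omega;\mathcal{H})$, since $\mathbb{E}\|\sum_n \sqrt{\lambda_n}\,\xi_n e_n\|^2 = \sum_n \lambda_n < \infty$; this shows $X$ is a bona fide $\mathcal{H}$-valued random element, and I would take $\mu$ to be its law. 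It then remains to check the defining properties: that every $\ell(X)$ is a one-dimensional Gaussian, being an $L^2$-limit of finite sums of independent Gaussians, so that $\mu$ is Gaussian in the sense of the definition given above; and that $\mu$ has mean $m$ and covariance $\mathcal{C}$, both following by a termwise computation using independence and $\mathbb{E}\xi_n^2 = 1$. These verifications are routine once convergence of the Karhunen--Lo\`{e}ve series is established.
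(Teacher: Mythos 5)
Your argument is correct, but note that the paper does not actually prove this lemma: it is quoted from Da Prato and Zabczyk with a citation only, so there is no in-paper proof to match against. Your treatment is the standard one. The converse direction, via the spectral decomposition of $\mathcal{C}$ and the Karhunen--Lo\`{e}ve series $X=m+\sum_n\sqrt{\lambda_n}\,\xi_n e_n$ with $L^2(\Omega;\mathcal{H})$ convergence guaranteed by $\sum_n\lambda_n<\infty$, is exactly the construction used throughout the paper (e.g.\ in the proof of Theorem 2.3 there), and your observation that an $L^2$-limit of Gaussians is Gaussian closes it. In the forward direction you correctly isolate the only nontrivial point, namely that $\mathrm{tr}\,\mathcal{C}=\int_{\mathcal{H}}\|x\|^2\,\mu(dx)$ must be shown finite and that this does not follow from one-dimensional Gaussianity of the marginals alone; invoking Fernique's theorem is a legitimate and clean way to do this. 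The cited reference instead obtains the second-moment bound by a more elementary estimate on the characteristic functional $\hat{\mu}(h)=e^{-\frac12(\mathcal{C}h,h)}$, integrated against finite-dimensional Gaussian weights, which avoids the heavier machinery of Fernique; what your route buys is brevity and the stronger conclusion that all polynomial moments are finite (which the paper in fact uses elsewhere, in Lemma A.5). Two small caveats: separability of $\mathcal{H}$ should be assumed, both for Fernique's theorem and for the countable orthonormal eigenbasis (harmless here, since the paper works in $L^2(\mathbf{T}^2)$); and your expressions $\ell(x)^2$ and $(x,e_n)^2$ implicitly take $\mathcal{H}$ real, which is consistent with the paper's setting of real-valued fields but worth stating.
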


\begin{appxlem}
\label{condgaussian}
Let $\mathcal{H} = \mathcal{H}_1 \oplus \mathcal{H}_2$ be a separable Hilbert space
with projections $\Pi_i: \mathcal{H} \to \mathcal{H}_i, i=1,2$.
For an $\mathcal{H}$-valued Gaussian random variable $(x_1,x_2)$ with mean
$m = (m_1,m_2)$ and positive-definite covariance operator $\mathcal{C}$,
denote $C_{ij} = \Pi_i \mathcal{C} \Pi_j^*$.
Then the conditional distribution of $x_1$ given $x_2$ is Gaussian with mean
\begin{eqnarray*}
m_{1|2} = m_1 - {C}_{12}{C}_{22}^{-1}(m_2 - x_2),
\end{eqnarray*}
and covariance operator
\begin{eqnarray*}
\mathcal{C}_{1|2} = {C}_{11} - {C}_{12}{C}_{22}^{-1}{C}_{21}.
\end{eqnarray*}
\end{appxlem}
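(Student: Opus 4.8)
The plan is to reduce the conditioning to a statement about independence by subtracting off the correct linear function of $x_2$. I would introduce the auxiliary variable
\[
z := x_1 - C_{12}C_{22}^{-1}x_2,
\]
so that the pair $(z,x_2)$ is a linear image of $(x_1,x_2)$ and is therefore again jointly Gaussian (granting that $C_{12}C_{22}^{-1}$ is well-defined, a point I return to below). The coefficient $C_{12}C_{22}^{-1}$ is chosen precisely so that $z$ decorrelates from $x_2$: writing $z-\mathbb{E}z = (x_1-m_1)-C_{12}C_{22}^{-1}(x_2-m_2)$ and using self-adjointness of $\mathcal{C}$ (so that $C_{21}=C_{12}^*$), one computes the cross-covariance
\[
\mathbb{E}\bigl[(z-\mathbb{E}z)(x_2-m_2)^*\bigr] = C_{12}-C_{12}C_{22}^{-1}C_{22}=0.
\]
For jointly Gaussian random variables a vanishing cross-covariance is equivalent to independence, so $z$ is independent of $x_2$.

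Next I would record the marginal law of $z$. Its mean is $m_1-C_{12}C_{22}^{-1}m_2$, and expanding $\mathbb{E}\bigl[(z-\mathbb{E}z)(z-\mathbb{E}z)^*\bigr]$ and cancelling the three cross terms against one another yields covariance $C_{11}-C_{12}C_{22}^{-1}C_{21}$, which is exactly $\mathcal{C}_{1|2}$. I then write $x_1 = z + C_{12}C_{22}^{-1}x_2$ and condition on $x_2$. Since $z\perp x_2$, the conditional law of $z$ given $x_2$ coincides with its unconditional law, so $x_1\mid x_2$ is Gaussian with covariance $\mathcal{C}_{1|2}$ and mean
\[
\bigl(m_1-C_{12}C_{22}^{-1}m_2\bigr)+C_{12}C_{22}^{-1}x_2 = m_1-C_{12}C_{22}^{-1}(m_2-x_2)=m_{1|2},
\]
as claimed. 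Existence of a regular conditional distribution poses no difficulty here, since a separable Hilbert space is Polish.

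The main obstacle is the infinite-dimensional bookkeeping surrounding $C_{22}^{-1}$. Because $C_{22}$ is the covariance operator of a Gaussian measure on $\mathcal{H}_2$, it is nuclear by Lemma~\ref{gaussiancovop}, hence compact, so its inverse is only densely defined and unbounded; one must check that the composition $C_{12}C_{22}^{-1}$ appearing above is nonetheless meaningful. The positive-definiteness hypothesis on $\mathcal{C}$ guarantees injectivity of $C_{22}$ on its range, and the cleanest way to make everything rigorous in the present paper is to exploit the fact that, in every application, $\mathcal{L},\mathcal{C}_0,\Gamma$ are simultaneously diagonalized in the Fourier basis $\{\phi_k\}$. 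The entire computation then decouples into a mode-by-mode, effectively finite-dimensional, statement in which $C_{22}^{-1}$ is unambiguous, and one reassembles the result by summing over $k\in\mathbb{K}$. Alternatively, one can bypass the inverse entirely by verifying the factorization at the level of characteristic functionals $\exp\bigl(i\ell(m)-\frac{1}{2}\ell(\mathcal{C}\ell)\bigr)$, but the decorrelation argument above is the most transparent route.
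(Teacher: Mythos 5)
The paper does not prove this lemma at all: it is stated as one of several standard facts imported from the reference [Prato92] (Da Prato and Zabczyk), so there is no in-paper argument to compare against. Your decorrelation proof --- setting $z = x_1 - C_{12}C_{22}^{-1}x_2$, checking that the cross-covariance with $x_2$ vanishes, invoking the equivalence of uncorrelatedness and independence for jointly Gaussian variables, and reading off the conditional mean and covariance from $x_1 = z + C_{12}C_{22}^{-1}x_2$ --- is precisely the canonical argument given in that reference, and your computations of the mean and of $\mathcal{C}_{1|2}$ are correct. You also correctly identify the only genuine subtlety, namely that $C_{22}$ is nuclear so $C_{22}^{-1}$ is unbounded and $C_{12}C_{22}^{-1}$ needs interpretation; your observation that in every use within this paper the operators are simultaneously diagonalized in the basis $\{\phi_k\}$, so the argument decouples into scalar statements mode by mode (or, alternatively, can be run on characteristic functionals), is an adequate resolution for the purposes for which the lemma is invoked here, e.g.\ in the proof of Theorem~\ref{inflinearfilter} where $C_{22}=\Gamma+\mathcal{C}_{n+1|n}$ with $\Gamma$ strictly positive.
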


\begin{appxlem}
\emph{(Feldman-Hajek)}
\label{FH}
Two Gaussian measures $\mu_i = \mathcal{N}(m_i, \mathcal{C}_i)$, $i = 1, 2$, on a Hilbert
space $\mathcal{H}$ are either singular or equivalent.  They are equivalent if and only if the
following three conditions hold:\\
(i) $\mbox{Im}\left(\mathcal{C}_1^{\frac{1}{2}}\right)
=\mbox{Im}\left(\mathcal{C}_2^{\frac{1}{2}}\right):=E$;\\
(ii) $m_1-m_2 \in E$;\\
(iii) the operator $T:=\left(\mathcal{C}_1^{-\frac{1}{2}}\mathcal{C}_2^{\frac{1}{2}}\right)
\left(\mathcal{C}_1^{-\frac{1}{2}}\mathcal{C}_2^{\frac{1}{2}}\right)^*-I$
is Hilbert-Schmidt in $\bar{E}$.  
\end{appxlem}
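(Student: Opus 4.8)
The plan is to reduce the comparison of $\mu_1$ and $\mu_2$ to a countable product of one-dimensional Gaussian comparisons, where Kakutani's dichotomy theorem applies, after first splitting off the contribution of the means. Throughout, write $E=\mathrm{Im}(\mathcal{C}_1^{1/2})$ for the Cameron-Martin space of $\mu_1$, and recall from Lemma~\ref{gaussiancovop} that each $\mathcal{C}_i$ is self-adjoint, positive and nuclear, hence compact with summable eigenvalues and with $\mathcal{C}_i^{1/2}$ Hilbert-Schmidt.

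First I would dispose of the means using the Cameron-Martin theorem: for a fixed covariance $\mathcal{C}$, the translate $\mathcal{N}(a,\mathcal{C})$ is equivalent to $\mathcal{N}(0,\mathcal{C})$ precisely when $a\in\mathrm{Im}(\mathcal{C}^{1/2})$, and mutually singular otherwise. Granted condition (i), so that $\mathrm{Im}(\mathcal{C}_1^{1/2})=\mathrm{Im}(\mathcal{C}_2^{1/2})=E$, this identifies condition (ii) as exactly the requirement $m_1-m_2\in E$ needed for the mean shift to preserve equivalence, and it reduces the remaining analysis to the centred measures $\mathcal{N}(0,\mathcal{C}_1)$ and $\mathcal{N}(0,\mathcal{C}_2)$.

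Next I would set up the product structure. Under condition (i) the equality of ranges forces, by Douglas's range-inclusion lemma together with the closed graph theorem, the operator $R:=\mathcal{C}_1^{-1/2}\mathcal{C}_2^{1/2}$ to extend from $E$ to a bounded, boundedly invertible operator on $\overline{E}$; consequently $S:=RR^*=\mathcal{C}_1^{-1/2}\mathcal{C}_2\mathcal{C}_1^{-1/2}$ is bounded, positive, self-adjoint and bounded below by a positive constant, and $T=S-I$ is precisely the operator appearing in condition (iii). Assuming $T$ is Hilbert-Schmidt it is compact, so I diagonalize it in an orthonormal basis $\{e_k\}$ of $\overline{E}$ with eigenvalues $\tau_k>-1$ and $\tau_k\to0$. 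Passing to the white-noise coordinates $x_k=(\mathcal{C}_1^{-1/2}\,\cdot\,,e_k)$ turns $\mu_1$ into the standard product Gaussian $\bigotimes_k\mathcal{N}(0,1)$, while under $\mu_2$ the $x_k$ are jointly Gaussian with covariances $(Se_j,e_k)$, hence independent with variances $1+\tau_k$, so that $\mu_2$ becomes $\bigotimes_k\mathcal{N}(0,1+\tau_k)$.

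Finally I would invoke Kakutani's theorem for infinite products of equivalent measures: the products are equivalent when $\sum_k(1-H_k)<\infty$ and mutually singular otherwise, where $H_k$ denotes the Hellinger affinity between $\mathcal{N}(0,1)$ and $\mathcal{N}(0,1+\tau_k)$. A direct computation gives $H_k=(2\sqrt{1+\tau_k}/(2+\tau_k))^{1/2}$, whose Taylor expansion shows that $1-H_k$ is asymptotically proportional to $\tau_k^2$ as $\tau_k\to0$; hence the series converges if and only if $\sum_k\tau_k^2<\infty$, that is, exactly when $T$ is Hilbert-Schmidt. Combined with the mean reduction this establishes the equivalence criterion (i)--(iii), while the overall dichotomy, equivalent or singular with nothing intermediate, is inherited directly from Kakutani's dichotomy together with the Cameron-Martin dichotomy for the means. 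The hard part is this third step: since $\mathcal{C}_1$ and $\mathcal{C}_2$ need not commute they cannot be diagonalized simultaneously, and the argument depends on rigorously constructing the operators $\mathcal{C}_1^{-1/2}\mathcal{C}_2^{1/2}$ and $\mathcal{C}_1^{-1/2}\mathcal{C}_2\mathcal{C}_1^{-1/2}$, which involve the unbounded inverse $\mathcal{C}_1^{-1/2}$, and on verifying that the white-noise coordinate map faithfully transports $\mu_1$ and $\mu_2$ to the claimed product measures on $\ell^2$, so that Kakutani's theorem may legitimately be applied.
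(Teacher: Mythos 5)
The paper does not prove this lemma at all: it is quoted verbatim from Da~Prato and Zabczyk \cite{Prato92} as background material, so there is no in-paper argument to compare yours against. Judged on its own terms, your sketch follows the standard textbook route (Cameron--Martin for the means, white-noise coordinates to diagonalize $S=\mathcal{C}_1^{-1/2}\mathcal{C}_2\mathcal{C}_1^{-1/2}$, then Kakutani's dichotomy for the resulting product measures), and the computations you quote are correct: the Hellinger affinity of $\mathcal{N}(0,1)$ and $\mathcal{N}(0,1+\tau_k)$ is indeed $\bigl(2\sqrt{1+\tau_k}/(2+\tau_k)\bigr)^{1/2}$, and $1-H_k$ is of order $\tau_k^2$, so $\sum_k(1-H_k)<\infty$ if and only if $T$ is Hilbert--Schmidt. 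This correctly delivers sufficiency of (i)--(iii) and the equivalent/singular dichotomy \emph{within} the class of pairs satisfying (i) and (ii).

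The genuine gap is in the necessity of condition (i). Your entire reduction --- Douglas's lemma giving boundedness and bounded invertibility of $R=\mathcal{C}_1^{-1/2}\mathcal{C}_2^{1/2}$, the passage to product coordinates, and hence the applicability of Kakutani --- presupposes $\mathrm{Im}(\mathcal{C}_1^{1/2})=\mathrm{Im}(\mathcal{C}_2^{1/2})$. When the ranges differ neither Kakutani nor Cameron--Martin is available in the form you invoke, so the claim that the measures are then singular (which is what ``only if'' for (i) requires, and is also needed to complete the global dichotomy) must be argued separately. The standard device is the quadratic-form characterization of range inclusion (Lemma~\ref{FH1} in the paper): failure of (i) yields unit vectors $h_n$ with $(h_n,\mathcal{C}_2 h_n)/(h_n,\mathcal{C}_1 h_n)\to\infty$ (or $\to 0$), and a strong-law argument on the corresponding sequence of linear functionals produces an event of $\mu_1$-measure one and $\mu_2$-measure zero. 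Relatedly, the necessity of (ii) under equivalence can only be read off from the Cameron--Martin dichotomy \emph{after} (i) has been established, so the logical order ``equivalence $\Rightarrow$ (i) $\Rightarrow$ (ii) $\Rightarrow$ (iii)'' should be made explicit rather than folded into the phrase ``inherited directly.''
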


\begin{appxlem}
\label{FH1}
For any two positive-definite, self-adjoint operators
$\mathcal{C}_i$, $i=1,2$, on a Hilbert space $\mathcal{H}$,
the condition
$\mbox{Im}\left(\mathcal{C}_1^{\frac{1}{2}}\right) \subseteq
\mbox{Im}\left(\mathcal{C}_2^{\frac{1}{2}}\right)$ holds if and only if
there exists a constant $K > 0$ such that 
\begin{eqnarray*}
\left( h, \mathcal{C}_1 h \right) \leq K \left( h, \mathcal{C}_2 h \right),
\quad \forall h \in \mathcal{H}
\end{eqnarray*}
where $( \cdot, \cdot )$ denotes the inner product on $\mathcal{H}$.
\end{appxlem}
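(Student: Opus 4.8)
The plan is to recognize this as the range-inclusion principle of Douglas applied to the self-adjoint square roots $\mathcal{C}_1^{1/2}$ and $\mathcal{C}_2^{1/2}$, and to prove the two implications separately. Throughout I would use the identity $(h,\mathcal{C}_i h) = \|\mathcal{C}_i^{1/2} h\|^2$, which recasts the claimed estimate as $\|\mathcal{C}_1^{1/2} h\| \le \sqrt{K}\,\|\mathcal{C}_2^{1/2} h\|$ for all $h \in \mathcal{H}$. Since $\mathcal{C}_1$ and $\mathcal{C}_2$ are positive-definite, both square roots are injective with dense range; this will be used to pin down the factorizing operators uniquely.

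For the direction from the inequality to the inclusion, I would first define a linear map $D$ on $\mbox{Im}(\mathcal{C}_2^{1/2})$ by $D(\mathcal{C}_2^{1/2}h) = \mathcal{C}_1^{1/2}h$. The reformulated inequality shows at once that $D$ is well defined, since $\mathcal{C}_2^{1/2}h = \mathcal{C}_2^{1/2}h'$ forces $\|\mathcal{C}_1^{1/2}(h-h')\| \le \sqrt{K}\,\|\mathcal{C}_2^{1/2}(h-h')\| = 0$, and that it is bounded with norm at most $\sqrt{K}$. Hence $D$ extends continuously to $\overline{\mbox{Im}(\mathcal{C}_2^{1/2})}$ and, by setting it to zero on the orthogonal complement, to a bounded operator on $\mathcal{H}$ satisfying $D\mathcal{C}_2^{1/2} = \mathcal{C}_1^{1/2}$. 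Taking adjoints and using self-adjointness of the square roots yields $\mathcal{C}_1^{1/2} = \mathcal{C}_2^{1/2}D^*$, from which $\mbox{Im}(\mathcal{C}_1^{1/2}) \subseteq \mbox{Im}(\mathcal{C}_2^{1/2})$ is immediate.

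For the converse, injectivity of $\mathcal{C}_2^{1/2}$ lets me define $C := (\mathcal{C}_2^{1/2})^{-1}\mathcal{C}_1^{1/2}$, where $(\mathcal{C}_2^{1/2})^{-1}$ is the a priori unbounded inverse on $\mbox{Im}(\mathcal{C}_2^{1/2})$; the inclusion hypothesis guarantees $C$ is defined on all of $\mathcal{H}$. The crucial step is to show $C$ is bounded, and here I would invoke the closed graph theorem: if $x_n \to x$ and $Cx_n \to y$, then continuity of $\mathcal{C}_2^{1/2}$ gives $\mathcal{C}_2^{1/2}Cx_n \to \mathcal{C}_2^{1/2}y$, while $\mathcal{C}_2^{1/2}Cx_n = \mathcal{C}_1^{1/2}x_n \to \mathcal{C}_1^{1/2}x$ by continuity of $\mathcal{C}_1^{1/2}$; injectivity of $\mathcal{C}_2^{1/2}$ then forces $y = Cx$, so the graph is closed and $C$ is bounded. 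With the factorization $\mathcal{C}_1^{1/2} = \mathcal{C}_2^{1/2}C$ in hand, the operator bound $CC^* \le \|C\|^2 I$ gives $\mathcal{C}_1 = \mathcal{C}_1^{1/2}(\mathcal{C}_1^{1/2})^* = \mathcal{C}_2^{1/2}CC^*\mathcal{C}_2^{1/2} \le \|C\|^2 \mathcal{C}_2$, i.e. the desired inequality with $K = \|C\|^2$.

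I expect the main obstacle to be the converse direction. In the forward implication the constant $K$ is handed over directly by the hypothesis, but here boundedness of the factorizing operator $C$ is not algebraic: it rests on completeness of $\mathcal{H}$ through the closed graph theorem, so the resulting constant $K = \|C\|^2$ is non-constructive. Care is also needed because $(\mathcal{C}_2^{1/2})^{-1}$ is generally unbounded, so $C$ must be treated as a genuinely everywhere-defined operator whose continuity is established a posteriori rather than assumed.
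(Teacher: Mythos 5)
Your proof is correct. The paper itself gives no proof of this lemma -- it is quoted from the reference [Prato92] (where it appears as the standard range-inclusion/factorization result of Douglas, proved there by essentially the same argument you give: bounded factorization $D\mathcal{C}_2^{1/2}=\mathcal{C}_1^{1/2}$ plus an adjoint in one direction, and the closed graph theorem applied to $(\mathcal{C}_2^{1/2})^{-1}\mathcal{C}_1^{1/2}$ in the other). Both of your implications are complete; the only implicit hypothesis you rely on is that the $\mathcal{C}_i$ are bounded (so that the square roots are continuous and the adjoint manipulations are legitimate), which holds in the paper's setting since the $\mathcal{C}_i$ are covariance operators of Gaussian measures and hence trace-class.
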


\begin{appxlem}
\label{gaussianmoment}
For Gaussian $X$ on a Hilbert space $\mathcal{H}$ with norm $\left\| \cdot \right\|$
and for any integer $n$,
there is constant $C_n \geq 0$ such that
$\mathbb{E} ( \left\| X \right\|^{2n} )
\leq C_n \left( \mathbb{E} \left( \left\| X \right\|^{2} \right) \right)^n$.
\end{appxlem}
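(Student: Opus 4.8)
The plan is to reduce the statement to the mean-zero case and then settle that case with the Karhunen--Lo\`{e}ve expansion together with a single convexity (Jensen) inequality. The first thing I would record is that the asserted inequality is homogeneous of degree $2n$ under the rescaling $X \mapsto tX$: both $\mathbb{E}(\|X\|^{2n})$ and $\left(\mathbb{E}(\|X\|^2)\right)^n$ pick up the same factor $t^{2n}$. This confirms that the constant $C_n$ should depend only on $n$, not on the mean or covariance of $X$, and lets me normalise $\mathbb{E}(\|X\|^2)$ freely when convenient.

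For a \emph{centred} Gaussian $X_0$ I would write its Karhunen--Lo\`{e}ve expansion $X_0 = \sum_k \sqrt{\lambda_k}\,\xi_k e_k$, where $\{e_k\}$ are the orthonormal eigenvectors of the covariance operator, the eigenvalues $\lambda_k \ge 0$ are summable (Lemma~\ref{gaussiancovop}), and the $\xi_k$ are i.i.d.\ real standard Gaussians. Then $\|X_0\|^2 = \sum_k \lambda_k \xi_k^2$ and $\mathbb{E}(\|X_0\|^2) = \sum_k \lambda_k =: S$. Writing $p_k = \lambda_k/S$, so that $\{p_k\}$ is a probability vector, the convexity of $t \mapsto t^n$ gives, for each realisation, the pointwise Jensen bound $\left(\sum_k p_k \xi_k^2\right)^n \le \sum_k p_k \xi_k^{2n}$. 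Taking expectations, interchanging sum and expectation by Tonelli (all terms are non-negative), and using $\mathbb{E}(\xi_k^{2n}) = (2n-1)!!$, I would obtain $\mathbb{E}(\|X_0\|^{2n}) = S^n\,\mathbb{E}\!\left[\left(\sum_k p_k \xi_k^2\right)^n\right] \le (2n-1)!!\,\left(\mathbb{E}(\|X_0\|^2)\right)^n$. This single estimate both proves finiteness of the $2n$-th moment and yields $C_n = (2n-1)!!$ in the centred case.

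To treat a general Gaussian $X$ with mean $m$ and centred part $X_0 = X - m$, I would apply Minkowski's inequality in $L^{2n}(\Omega)$ to the scalar random variable $\|X\|$. Since $\|X\| \le \|m\| + \|X_0\|$ with $\|m\|$ deterministic, $\left(\mathbb{E}\|X\|^{2n}\right)^{1/(2n)} \le \|m\| + \left(\mathbb{E}\|X_0\|^{2n}\right)^{1/(2n)} \le \|m\| + \left((2n-1)!!\right)^{1/(2n)}\left(\mathbb{E}\|X_0\|^2\right)^{1/2}$. Bounding $\left((2n-1)!!\right)^{1/(2n)} \ge 1$, combining $\|m\|^2 + \mathbb{E}(\|X_0\|^2) = \mathbb{E}(\|X\|^2)$ with the elementary inequality $\sqrt{a}+\sqrt{b} \le \sqrt{2}\sqrt{a+b}$, and raising to the power $2n$ gives $\mathbb{E}(\|X\|^{2n}) \le 2^n (2n-1)!!\,\left(\mathbb{E}(\|X\|^2)\right)^n$, so $C_n = 2^n (2n-1)!!$ works uniformly.

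The only genuinely delicate point is the non-centred case. Expanding $\|X\|^2 = \|m\|^2 + 2(m,X_0) + \|X_0\|^2$ directly is awkward because the cross term $2(m,X_0)$ is not sign-definite, so a binomial expansion does not obviously close; routing through Minkowski's inequality on the scalar norm $\|X\|$ is precisely what sidesteps this and, crucially, keeps $C_n$ independent of $m$ and of the covariance operator. I would also flag that one must not presuppose $\mathbb{E}(\|X\|^{2n})<\infty$: in the scheme above finiteness is not assumed but emerges from the Karhunen--Lo\`{e}ve/Jensen step, relying only on the trace-class property of the covariance guaranteed by Lemma~\ref{gaussiancovop}.
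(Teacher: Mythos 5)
Your proof is correct, but note that the paper does not actually prove Lemma~\ref{gaussianmoment}: it is stated without proof and attributed, along with the other facts in \ref{app:A}, to the reference \cite{Prato92}, where the centred case is usually obtained via estimates on the moment generating function of $\left\|X\right\|^2$ (or via Fernique's theorem). Your route -- Karhunen--Lo\`{e}ve expansion plus the discrete Jensen inequality $\bigl(\sum_k p_k \xi_k^2\bigr)^n \le \sum_k p_k \xi_k^{2n}$, followed by Minkowski in $L^{2n}(\Omega)$ and $\sqrt{a}+\sqrt{b}\le\sqrt{2}\sqrt{a+b}$ to absorb the mean -- is elementary, self-contained, and yields the explicit constant $C_n = 2^n(2n-1)!!$. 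Crucially, your constant depends only on $n$ and not on the mean or covariance of $X$, which is exactly the uniformity needed where the lemma is invoked (the Borel--Cantelli argument in Theorem~\ref{limitthm1}, applied to the Gaussians $m_n'-u$ for every $n$). The only points worth making explicit are that the pointwise Jensen step is legitimate because $\sum_k \lambda_k \xi_k^2$ is almost surely finite (its expectation is $\mathrm{Tr}\,\mathcal{C}<\infty$ by Lemma~\ref{gaussiancovop}), and that $\mathbb{E}\left\|X\right\|^2 = \left\|m\right\|^2 + \mathbb{E}\left\|X_0\right\|^2$ uses $\mathbb{E}(m,X_0)=0$; both are immediate. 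No gaps.
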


\section{Proof of Limit Theorems}
\label{app:B}
In this Appendix, we will prove
the Limit Theorems~\ref{limitthm1},
\ref{limitthm2},
\ref{limitthm3},
where
$\mathcal{L} = \mathcal{L}'$,
$\mathcal{L} \neq \mathcal{L}'$,
$\mathcal{L}(t) \neq \mathcal{L}'(t)$,
and
Theorem \ref{limitthm4}
where
$\mathcal{L}(t) \neq \mathcal{L}'(t;\omega')$,
respectively.
In all cases, we use the notations
$e^{-t\mathcal{L}}$ and
$e^{-t\mathcal{L}'}$ to denote the forward solution operators through $t$ time units
(from time zero in the non-autonomous case).
We denote by ${M}$ the putative limit for $m'_n$. The identity
\begin{eqnarray}
\label{eq:identity}
\left( nI+\Gamma \mathcal{C}_{0}^{-1}\right) (m'_{n}-{M})
& = \Gamma \mathcal{C}_{0}^{-1} \left( m_{0}-{M} \right)
+\sum_{l=0}^{n-1} e^{t_{l+1} \mathcal{L} }\eta'_{l+1} \nonumber\\
& \quad +\sum_{l=0}^{n-1} \left( e^{ t_{l+1}
\left(\mathcal{L} -\mathcal{L}' \right)}u -{M} \right),
\end{eqnarray}
obtained from Equation~(\ref{eq:posmean}),
will be used to show $m'_n \to {M}$ .
In Equation~(\ref{eq:identity}),
we will choose $M$ so that
the contribution of the last term is asymptotically negligible.
Define the Fourier representations
\begin{eqnarray*}
\mathbf{e}_n & \equiv  m'_{n}-{M} = \sum_{k\in\mathbb{K}} \hat{\mathbf{e}}_{n}(k) \phi_k, \\
\mathbf{\xi}_l & \equiv  e^{t_{l+1} \left(\mathcal{L}-\mathcal{L}'\right)}u
-{M} = \sum_{k\in\mathbb{K}} \hat{\mathbf{\xi}}_{l}(k) \phi_k.
\end{eqnarray*}
Then $M$ will be any one of
$u$, $\mathcal{F}_{(p,q)}u$, $\langle u\rangle$ and $u_\alpha.$
Hence there is $C_1$ independent of $k,l$ such that 
$\hat{\mathbf{\xi}}_l(0)=0$ and
$\mathbb{E}\vert \hat{\mathbf{\xi}}_l(k) \vert \leq C_1\vert \left(u,\phi_k \right)\vert$
with $C_1<\infty$ (the expectation here is trivial
except in the case of random ${\mathcal L}'$).
Using Equation~(\ref{eq:newKLexpn}),
these Fourier coefficients satisfy the relation
\begin{eqnarray*}
\left(n+\frac{\gamma_k}{\lambda_k} \right) \hat{\mathbf{e}}_n (k)
& = \frac{\gamma_k}{\lambda_k} \hat{\mathbf{e}}_0 (k)
+\sqrt{\gamma'_k}\sum_{l=0}^{n-1} (\mathbf{g}_k')^{l+1}
+\sum_{l=0}^{n-1} \hat{\mathbf{\xi}}_l(k),
\quad k\in \mathbb{K}.
\end{eqnarray*}

In order to prove $m'_n \to {M}$ in $L^2(\Omega';H^s(\mathbf{T}^2))$,
we use the monotone convergence theorem to obtain the following inequalities,

\begin{eqnarray}
\label{eq:coreinequality}
\fl n^\delta \left\| \mathbf{e}_n \right\|^2_{L^2(\Omega';H^s(\mathbf{T}^2))}
&= n^\delta \mathbb{E} \left\| \mathbf{e}_n \right\|^2_{H^s(\mathbf{T}^2)} \nonumber\\
&= \sum_{k \in \mathbb{K}^+} |k|^{2s} n^\delta \mathbb{E} \lvert \hat{\mathbf{e}}_n(k) \rvert^2
+n^\delta \mathbb{E} \lvert \hat{\mathbf{e}}_n(0) \rvert^2 \nonumber\\
&= \sum_{k \in \mathbb{K}^+} |k|^{2s}
\frac{n^\delta}{\left(n+{\gamma_k}/{\lambda_k} \right)^2}
\Bigg[  \left(\frac{\gamma_k}{\lambda_k}\right)^2 |\hat{\mathbf{e}}_0 (k)|^2 +\gamma_k' n \nonumber\\
& \quad +2 \mbox{Re} \left\lbrace \frac{\gamma_k}{\lambda_k}
  \bar{\hat{\mathbf{e}}}_0(k) \mathbb{E} \left( \sum_{l=0}^{n-1}
    \hat{\mathbf{\xi}}_l(k) \right) \right\rbrace
+\mathbb{E} \left\lvert \sum_{l=0}^{n-1} \hat{\mathbf{\xi}}_l(k) \right\rvert^2 \Bigg] \nonumber\\
& \quad + \frac{n^\delta}{(n+{\gamma_0}/{\lambda_0})^2}
\left[ 
  \left( \frac{\gamma_0}{\lambda_0} \right)^2 \lvert \hat{\mathbf{e}}_0(0) \rvert^2 +\gamma_0' n
\right] \nonumber\\
& \leq \sum_{k \in \mathbb{K}^+} |k|^{2s}
{n^{\delta-2}} \Bigg[ \left(\frac{\gamma_k}{\lambda_k}\right)^2
|\hat{\mathbf{e}}_0 (k)|^2 \nonumber\\
& \quad +\gamma_k' n +2C_1 \frac{\gamma_k}{\lambda_k}
\lvert \hat{\mathbf{e}}_0 (k) \rvert \lvert (u,\phi_k)\rvert n
+\mathbb{E} \left\lvert \sum_{l=0}^{n-1} \hat{\mathbf{\xi}}_l(k) \right\rvert^2 \Bigg] \nonumber\\
& \quad
+n^{\delta-2} \left[ \left( \frac{\gamma_0}{\lambda_0} \right)^2
  \lvert \hat{\mathbf{e}}_0(0) \rvert^2 + \gamma_0' n\right] \nonumber\\
 & \leq \sum_{k \in \mathbb{K}^+} |k|^{2s}
 {n^{\delta-2}} \Bigg[ \left(\frac{\gamma_k}{\lambda_k}\right)^2
 |\hat{\mathbf{e}}_0 (k)|^2 +\gamma_k' n  \nonumber\\
 & \quad
 +C_1\left( \left(\frac{\gamma_k}{\lambda_k}\right)^2 |\hat{\mathbf{e}}_0 (k)|^2 
 + \lvert (u,\phi_k)\rvert^2 \right) n + \mathbb{E} \left\lvert \sum_{l=0}^{n-1}
\hat{\mathbf{\xi}}_l(k) \right\rvert^2 \Bigg] \nonumber\\
& \quad
 +n^{\delta-2} \left[ \left( \frac{\gamma_0}{\lambda_0} \right)^2
 \lvert \hat{\mathbf{e}}_0(0) \rvert^2 + \gamma_0' n\right] \nonumber\\
 & \leq \left(C \left\| m_0 -{M} \right\|_{H^{s+\kappa}(\mathbf{T}^2)}\right)n^{\delta-2}
 +\left( \sum_{k \in \mathbb{K}} |k|^{2s}\gamma_k' \right) n^{\delta-1} \nonumber\\
 & \quad + \left( C_1  \left\| u \right\|_{H^{s}(\mathbf{T}^2)} \right) n^{\delta -1}
 + \left( \sum_{k \in \mathbb{K}^+} |k|^{2s}  n^{\delta-2}
 \mathbb{E} \left\lvert \sum_{l=0}^{n-1} \hat{\mathbf{\xi}}_l(k) \right\rvert^2 \right) \nonumber\\
 & \leq C\left( \left\| m_0\right\|_{H^{s+\kappa}(\mathbf{T}^2)}
 +\left\| u \right\|_{H^{s+\kappa}(\mathbf{T}^2)}\right)n^{\delta-2}\nonumber\\
 & \quad + \left( \sum_{k \in \mathbb{K}} |k|^{2s}\gamma_k'
 +C_1  \left\| u \right\|_{H^{s}(\mathbf{T}^2)} \right) n^{\delta-1} \nonumber\\
&\quad
 + \left( \sum_{k \in \mathbb{K}} |k|^{2s}  n^{\delta-2}
 \mathbb{E} \left\lvert \sum_{l=0}^{n-1} \hat{\mathbf{\xi}}_l(k) \right\rvert^2 \right),
\end{eqnarray}

and here the first two terms in the last equation
can be controlled by Assumptions~\ref{ass1}.
In order to find $\delta \in [0,1]$ such that this equation is $O(1)$ or $o(1)$,
\begin{eqnarray}
\label{eq:coreterm}
\sum_{k \in \mathbb{K}} |k|^{2s}  n^{\delta-2}
\mathbb{E} \left\lvert \sum_{l=0}^{n-1} \hat{\mathbf{\xi}}_l(k) \right\rvert^2
\end{eqnarray}
is the key term. This term arises from the model error, i.e., the discrepancy between the operator
$\mathcal{L}$ used in the statistical model and the operator $\mathcal{L}'$ which generates
the data.
We analyze it, in various cases, in the subsections
which follow.

In order to prove $m'_n \to {M}$ in $H^s(\mathbf{T}^2), \; \Omega'-a.s.$,
suppose we have
\begin{eqnarray}
\label{eq:coreterm2}
\frac{1}{n}\sum_{l=0}^{n-1} \hat{\mathbf{\xi}}_l(k)
\to 0 \quad \Omega'-a.s.,
\end{eqnarray}
for each $k \in \mathbb{K}$.
We then use the strong law of large numbers to obtain the following inequalities,
which holds $\Omega'-a.s.$,
\begin{eqnarray}
\label{eq:coreinequality2}
\fl \left\| \mathbf{e}_n \right\|^2_{H^s(\mathbf{T}^2)}
& = \sum_{k \in \mathbb{K}^+} |k|^{2s} \lvert \hat{\mathbf{e}}_n(k) \rvert^2
+\lvert \hat{\mathbf{e}}_n(0) \rvert^2 \nonumber\\
& = \sum_{k \in \mathbb{K}^+} |k|^{2s}
\left\lvert \frac{
\left({\gamma_k}/{\lambda_k}\right) \hat{\mathbf{e}}_0 (k)
+\sqrt{\gamma'_k}\sum_{l=0}^{n-1} (\mathbf{g}_k')^{l+1}
+\sum_{l=0}^{n-1} \hat{\mathbf{\xi}}_l(k)
}{n+{\gamma_k}/{\lambda_k}}
\right\rvert^2 \nonumber\\
& \quad
+\left\lvert
\frac{
\left({\gamma_0}/{\lambda_0}\right) \hat{\mathbf{e}}_0 (0)
+\sqrt{\gamma'_0}\sum_{l=0}^{n-1} (\mathbf{g}_0')^{l+1}
}{
n+{\gamma_0}/{\lambda_0}
}
\right\rvert^2 \nonumber\\
& \leq 
C \sum_{k \in \mathbb{K}} (1+|k|^{2s})
\Bigg(
\left\lvert
\frac{
\left({\gamma_k}/{\lambda_k}\right) \hat{\mathbf{e}}_0 (k)
}{
n
}
\right\rvert^2
+\left\lvert
\frac{
\sqrt{\gamma'_k}\sum_{l=0}^{n-1} (\mathbf{g}_k')^{l+1}
}{
n
}
\right\rvert^2 \nonumber\\
& \quad +\left\lvert
\frac{
\sum_{l=0}^{n-1} \hat{\mathbf{\xi}}_l(k)
}{
n
}
\right\rvert^2
\Bigg) \nonumber\\
& \leq 
C \sum_{k \in \mathbb{K}} (1+|k|^{2s}) 
\left( |k|^{2\kappa} \lvert \hat{\mathbf{e}}_0 (k) \rvert^2 +\gamma_k'+\lvert (u,\phi_k) \rvert^2
\right) \nonumber\\
& \leq C\left( \left\| m_0\right\|_{H^{s+\kappa}(\mathbf{T}^2)}
+\left\| u \right\|_{H^{s+\kappa}(\mathbf{T}^2)}
+ \sum_{k \in \mathbb{K}} |k|^{2s}\gamma_k'
+ \left\| u \right\|_{H^{s}(\mathbf{T}^2)}\right).
\end{eqnarray}
Therefore, using Weierstrass M-test,
we have
$\left\| \mathbf{e}_n \right\|^2_{H^s(\mathbf{T}^2)} \to 0, \; \Omega'-a.s.$,
once Equation~(\ref{eq:coreterm2}) is satisfied.

\subsection{Proof of Theorem~\ref{limitthm1}}
This proof is given directly after the theorem statement.
For completeness we note that Equation~(\ref{eq:discov-a}) follows from
Equation~(\ref{eq:coreinequality}) with $\delta =1$.
Once it has been established that 
\begin{eqnarray*}
\left\| \cdot \right\|_{L^2(\Omega';H^s(\mathbf{T}^2))} = O\left(n^{-\delta/2}\right),
\end{eqnarray*}
then the proof that
\begin{eqnarray*}
\left\| \cdot \right\|_{H^s(\mathbf{T}^2)} = o\left(n^{-\theta}\right)
\quad \Omega'-a.s.,
\end{eqnarray*}
for any $\theta < \delta/2$ follows from a Borel-Cantelli argument as shown in the proof of
Theorem~\ref{limitthm1}.
We will not repeat this argument for the
proofs of Theorems~\ref{limitthm2},
\ref{limitthm3} and \ref{limitthm4}.

\subsection{Proof of Theorem~\ref{limitthm2}}
\subsubsection{}
When $\mathcal{L} \neq \mathcal{L}'$ and $\Delta t \, \delta c=(p'/p,q'/q)$,
choose ${M}=\mathcal{F}_{(p,q)}u$ then
\begin{eqnarray*}
\hat{\mathbf{\xi}}_l(k)
& = e^{2\pi i (k \cdot \delta c) t_{l+1}} (u,\phi_k)
- \delta^k_{(p,q)} (u,\phi_k) \\
& = \left(1-\delta^k_{(p,q)} \right)e^{2\pi i (k \cdot \delta c) t_{l+1}} (u,\phi_k),
\end{eqnarray*}
where $\delta^{k}_{(p,q)}$ is $1$ if
$k_1$ and $k_2$ for $k=(k_1,k_2)$ are multiples of $p$ and $q$ respectively,
and $0$ otherwise.
Using
\begin{eqnarray*}
\left\lvert \sum_{l=0}^{n-1} \hat{\mathbf{\xi}}_l(k) \right\rvert^2
& = \left(1-\delta^k_{(p,q)}\right)^2
\left\lvert \sum_{l=0}^{n-1} e^{2\pi i (k\cdot \delta c)t_{l+1}} \right\rvert^2 |(u,\phi_k)|^2 \\
& = \left(1-\delta^k_{(p,q)}\right) \left[ \frac{\sin(n \pi (k \cdot \delta c) \Delta t)}
{\sin(\pi (k \cdot \delta c) \Delta t)}\right]^2 |(u,\phi_k)|^2 \\
& \leq \left[\sin^2\left( \pi\left(\frac{1}{p}+\frac{1}{q} \right) \right)
\right]^{-1}|(u,\phi_k)|^2,
\end{eqnarray*}
the quantity in Equation~(\ref{eq:coreterm}) becomes
\begin{eqnarray*}
\left(
\left[\sin^2\left( \pi\left( \frac{1}{p}+\frac{1}{q} \right) \right) \right]^{-1}
\left\| u \right\|_{H^{s}(\mathbf{T}^2)}
\right) n^{\delta-2},
\end{eqnarray*}
so that from Equation~(\ref{eq:coreinequality})
\begin{eqnarray*}
n \left\| \mathbf{e}_n \right\|^2_{L^2(\Omega';H^s(\mathbf{T}^2))}
=O(1).
\end{eqnarray*}

\subsubsection{}
\label{B22}
When $\mathcal{L} \neq \mathcal{L}'$ and $\Delta t \, \delta c \in
\mathbb{R}\backslash\mathbb{Q} \times \mathbb{R}\backslash\mathbb{Q}$,
choose ${M}=\langle u \rangle$ then 
\begin{eqnarray*}
\hat{\mathbf{\xi}}_l(k) = 
e^{2\pi i (k \cdot \delta c) t_{l+1}}(u,\phi_k),
\end{eqnarray*}
for $k \in \mathbb{K}^+$.
It is immediate that 
\begin{eqnarray*}
\left\| \mathbf{e}_n \right\|_{H^s(\mathbf{T}^2)}
=o(1)
\quad \Omega'-a.s.,
\end{eqnarray*}
since
\begin{eqnarray*}
\frac{1}{n} \sum_{l=0}^{n-1}e^{2\pi i (k\cdot \delta c) t_{l+1}}=o(1),
\end{eqnarray*}
as $n \to \infty$,
by ergodicity ~\cite{Breiman} and Equation~(\ref{eq:coreterm2}).
Furthermore, when $\delta =0$, the quantity in
Equation~(\ref{eq:coreterm}) is bounded by 
\begin{eqnarray*}
\sum_{k \in \mathbb{K}}
|k|^{2s}\left\lvert
\frac{1}{n}
\sum_{l=0}^{n-1}e^{2\pi i (k\cdot \delta c) t_{l+1} }
\right\rvert^2 |(u,\phi_k)|^2
\leq C \| u\|_{H^s(\mathbf{T}^2)},
\end{eqnarray*}
and Weierstrass M-test can be used to show
\begin{eqnarray*}
\left\| \mathbf{e}_n \right\|^2_{L^2(\Omega';H^s(\mathbf{T}^2))} = o(1).
\end{eqnarray*}

\subsection{Proof of Theorem~\ref{limitthm3}}
When $\mathcal{L}(t) \neq \mathcal{L}'(t)$ and
$\int^t_0 \delta c(s)\,ds =\alpha+O(t^{-\beta})$,
choose ${M} = u_\alpha$ then
\begin{eqnarray*}
\hat{\mathbf{\xi}}_l(k) = 
\left(
e^{2\pi i k \cdot \left( \int^{t_{l+1}}_0 \delta c(s)\,ds \right)}
- e^{2\pi i k \cdot \alpha} \right)(u,\phi_k),
\end{eqnarray*}
and we obtain
\begin{eqnarray*}
\left\lvert \sum_{l=0}^{n-1} \hat{\mathbf{\xi}}_l(k) \right\rvert^2
=\left\lvert \sum_{l=0}^{n-1} \left(
e^{2\pi i k \cdot \left(\int^{t_{l+1}}_0 \delta c(s)\,ds -\alpha\right)}
- 1 \right)
\right\rvert^2 |(u,\phi_k)|^2.
\end{eqnarray*}
Now utilizing that 
\begin{eqnarray*}
\left\lvert \sum_{l=0}^{n-1} \left( e^{ i x_l } -1 \right) \right\rvert
\leq C \sum_{l=0}^{n-1} \lvert x_l \rvert \leq C_2 n^{1-\beta},
\end{eqnarray*}
when $x_l =O\left(l^{-\beta}\right)$ and for $\beta \in (0, 1/2]$,
Equation~(\ref{eq:coreinequality}) gives 
\begin{eqnarray*}
n^\delta \left\| \mathbf{e}_n \right\|^2_{L^2(\Omega';H^s(\mathbf{T}^2))}
& \leq C\left( \left\| m_0\right\|_{H^{s+\kappa}(\mathbf{T}^2)}
+\left\| u \right\|_{H^{s+\kappa}(\mathbf{T}^2)}\right)n^{\delta-2} \\
& \quad +\left( \sum_{k \in \mathbb{K}} |k|^{2s}\gamma_k'
+C_1  \left\| u \right\|_{H^{s}(\mathbf{T}^2)} \right) n^{\delta-1}\\
& \quad + \left( C_2 \left\| u \right\|^2_{H^{s}(\mathbf{T}^2)} \right) n^{\delta - 2\beta}
=O(1),
\end{eqnarray*}
for $\delta=1 \wedge 2\beta$.

\subsection{Proof of Theorem~\ref{limitthm4}}
When $\mathcal{L}(t) \neq \mathcal{L}'(t;\omega')$
and
$\int^t_0 c'(s;\omega')\,ds = \int^t_0 c(s)\,ds - \varepsilon {W}(t)$,
choose ${M}=\langle u\rangle$
then we obtain
\begin{eqnarray*}
{\mathbf{\xi}}_l = u\left( \cdot + \varepsilon W(t_{l+1}) \right) -\langle u \rangle,
\end{eqnarray*}
and
\begin{eqnarray*}
\hat{\mathbf{\xi}}_l(k) = 
e^{2\pi i k\cdot \varepsilon W(t_{l+1})}(u,\phi_k),
\end{eqnarray*}
for $k \in \mathbb{K}^+$.  
Using
\begin{eqnarray*}
\label{eq:coreineqality2}
\fl \mathbb{E} \left\lvert \sum_{l=0}^{n-1} \hat{\mathbf{\xi}}_l(k) \right\rvert^2
&= \mathbb{E} \left( \sum_{l=0}^{n-1}\sum_{l'=0}^{n-1}
e^{2\pi i k \cdot \varepsilon \left( W(t_{l+1})-W(t_{l'+1}) \right) }| (u,\phi_k)|^2 \right)\\
&= \sum_{l=0}^{n-1}\sum_{l'=0}^{n-1} e^{-2\pi^2 \varepsilon^2 |k|^2 \Delta t |l-l'|}| (u,\phi_k)|^2 \\
&= \left( n+ 2\sum_{l=1}^{n-1}\sum_{l'=0}^{n-1}
e^{-2\pi^2 \varepsilon^2 |k|^2 \Delta t |l-l'|}| \right) (u,\phi_k)|^2 \\
&= \left[n+\frac{2}{e^{2\pi^2 \varepsilon^2 |k|^2 \Delta t}-1}
\left( \frac{e^{-2\pi^2\varepsilon^2|k|^2 \Delta t(n-1)}-1}
{e^{2\pi^2\varepsilon^2|k|^2 \Delta t}-1} + n-1 \right) \right]|(u,\phi_k)|^2 \\
&\leq  \left[n+\frac{2}{e^{2\pi^2\varepsilon^2\Delta t}-1}n \right]|(u,\phi_k)|^2
=  \left(\frac{e^{2\pi^2\varepsilon^2\Delta t}+1}{e^{2\pi^2\varepsilon^2\Delta t}-1}\right) n
|(u,\phi_k)|^2,
\end{eqnarray*}
we get
\begin{eqnarray*}
\sum_{k \in \mathbb{K}^+}|k|^{2s}n^{\delta-2}
\mathbb{E} \left\lvert \sum_{l=0}^{n-1} \hat{\mathbf{\xi}}_l(k) \right\rvert^2
\leq  \left( \frac{e^{2\pi^2\varepsilon^2\Delta t}+1}{e^{2\pi^2\varepsilon^2\Delta t}-1} \right)
\left\| u \right\|^2_{H^s(\mathbf{T}^2)} n^{\delta-1}
=O(1),
\end{eqnarray*}
for $\delta = 1$.

\section*{References}
\bibliographystyle{iopart-num}
\bibliography{bibliographyfile}

\providecommand{\newblock}{}
\begin{thebibliography}{10}
\expandafter\ifx\csname url\endcsname\relax
  \def\url#1{{\tt #1}}\fi
\expandafter\ifx\csname urlprefix\endcsname\relax\def\urlprefix{URL }\fi
\providecommand{\eprint}[2][]{\url{#2}}

\bibitem{AndersonMoore79}
Anderson B, Moore J and Barratt J 1979

\bibitem{PFLit01}
Doucet A, De~Freitas N and Gordon N 2001 {\em {Sequential Monte Carlo methods
  in practice}\/} (Springer Verlag) ISBN 0387951466

\bibitem{Kalman60}
Kalman R {\em et~al.\/} 1960 {\em Journal of basic Engineering\/} {\bf 82}
  35--45

\bibitem{bennett2002inverse}
Bennett A 2002 {\em {Inverse modeling of the ocean and atmosphere}\/}
  (Cambridge Univ Pr) ISBN 0521813735

\bibitem{EnKFEvensen94}
Evensen G 1994 {\em Journal of geophysical research\/} {\bf 99} 10143 ISSN
  0148-0227

\bibitem{kal03}
Kalnay E 2003 {\em {Atmospheric modeling, data assimilation, and
  predictability}\/} (Cambridge Univ Pr) ISBN 0521796296

\bibitem{van2009particle}
van Leeuwen P 2009

\bibitem{MG07}
Majda A and Grote M 2007 {\em Proceedings of the National Academy of
  Sciences\/} {\bf 104} 1124

\bibitem{smith2009variational}
Smith P, Dance S, Baines M, Nichols N and Scott T 2009 {\em Ocean Dynamics\/}
  {\bf 59} 697--708 ISSN 1616-7341

\bibitem{smith2010hybrid}
Smith P, Dance S and Nichols N 2010

\bibitem{GN00}
Griffith A and Nichols N 2001 {\em J. of Flow, Turbulence and Combustion\/}
  {\bf 65} 469--488

\bibitem{Evensen06}
Evensen G 2009 {\em {Data assimilation: The ensemble Kalman filter}\/}
  (Springer Verlag) ISBN 3642037100

\bibitem{EnKFEvensenVanLeeuwen98}
Evensen~G v~L~P~J 1996 {\em Monthly Weather Review\/} {\bf 124} 85--96

\bibitem{SBBA08}
Snyder C, Bengtsson T, Bickel P and Anderson J 2008 {\em Monthly Weather
  Review\/} {\bf 136} 4629--4640 ISSN 1520-0493

\bibitem{bengtsson2008curse}
Bengtsson T, Bickel P and Li B 2008 {\em Probability and Statistics: Essays in
  Honor of David A. Freedman\/} {\bf 2} 316--334

\bibitem{bickel2008sharp}
Bickel P, Li B and Bengtsson T 2008 {\em IMS Collections: Pushing the Limits of
  Contemporary Statistics: Contributions in Honor of Jayanta K. Ghosh\/} {\bf
  3} 318--329

\bibitem{CK04}
Chorin A and Krause P 2004 {\em Proceedings of the National Academy of Sciences
  of the United States of America\/} {\bf 101} 15013

\bibitem{CT09a}
Chorin A and Tu X 2009 {\em Proceedings of the National Academy of Sciences\/}
  {\bf 106} 17249

\bibitem{CT09b}
Chorin A and Tu X 2009 {\em Arxiv preprint arXiv:0910.3241\/}

\bibitem{HM08}
Harlim J and Majda A 2008 {\em Nonlinearity\/} {\bf 21} 1281

\bibitem{CHM}
Castronovo E, Harlim J and Majda A 2008 {\em Journal of Computational
  Physics\/} {\bf 227} 3678--3714 ISSN 0021-9991

\bibitem{MHG10}
Majda A, Harlim J and Gershgorin B 2010 {\em Discrete and Continuous Dynamical
  Systems\/} {\bf 27} 441--486

\bibitem{FS09}
Jean-Pierre F and Simoni A 2010 {\em IDEI Working Papers\/}

\bibitem{NP08}
Neubauer A and Pikkarainen H 2008 {\em Journal of Inverse and Ill-posed
  Problems\/} {\bf 16} 601--613 ISSN 0928-0219

\bibitem{vVvZ08}
van~der Vaart A~W and van Zanten J~H 2008 {\em Annals of Statistics\/} {\bf 36}
  1435--1463

\bibitem{P06}
Pikkarainen H 2006 {\em Inverse Problems\/} {\bf 22} 365

\bibitem{SDBNS09}
Smith P, Dance S, Baines M, Nichols N and Scott T 2009 {\em Ocean Dynamics\/}
  {\bf 59} 697--708 ISSN 1616-7341

\bibitem{cotter2009bayesian}
Cotter S, Dashti M, Robinson J and Stuart A 2009 {\em Inverse Problems\/} {\bf
  25} 115008

\bibitem{Prato92}
Da~Prato G and Zabczyk J 1992 {\em {Stochastic equations in infinite
  dimensions}\/} (Cambridge Univ Pr) ISBN 0521385296

\bibitem{Varadhan}
Varadhan S 2001 {\em New York University Courant Institute of Mathematical
  Sciences, New York\/}

\bibitem{bogachev1998gaussian}
Bogachev V 1998 {\em Gaussian measures\/} vol~62 (American Mathematical
  Society)

\bibitem{kumar1986stochastic}
Kumar P and Varaiya P 1986 {\em Stochastic systems: estimation, identification
  and adaptive control\/} (Prentice-Hall, Inc.)

\bibitem{Breiman}
Breiman L 1968 {\em Reading, Mass\/}

\bibitem{flandoli2010well}
Flandoli F, Gubinelli M and Priola E 2010 {\em Inventiones Mathematicae\/} {\bf
  180} 1--53 ISSN 0020-9910

\bibitem{Acta10}
Stuart A 2010 {\em Acta Numerica\/} {\bf 19} 451--559 ISSN 1474-0508

\bibitem{CDS10}
Cotter~SL D~M and Robinson~JC S~A 2011 {\em International Journal for Numerical
  Methods in Fluids\/}

\bibitem{geweke2001bayesian}
Geweke J and Tanizaki H 2001 {\em Computational Statistics \& Data Analysis\/}
  {\bf 37} 151--170 ISSN 0167-9473

\bibitem{roberts2006harris}
Roberts G and Rosenthal J 2006 {\em The Annals of Applied Probability\/} {\bf
  16} 2123--2139 ISSN 1050-5164

\bibitem{cohn1997introduction}
Cohn S 1997 {\em Journal of the Metereological Society of Japan\/} {\bf 75}
  147--178

\bibitem{dee1998data}
Dee D and Da~Silva A 1998 {\em Quarterly Journal of the Royal Meteorological
  Society\/} {\bf 124} 269--295

\bibitem{baek2006local}
Baek S, Hunt B, Kalnay E, Ott E and Szunyogh I 2006 {\em Tellus Series A\/}
  {\bf 58} 293--306

\bibitem{gershgorin2010test}
Gershgorin B, Harlim J and Majda A 2010 {\em Journal of Computational
  Physics\/} {\bf 229} 1--31

\bibitem{apte2008data}
Apte A, Jones C, Stuart A and Voss J 2008 {\em International Journal for
  Numerical Methods in Fluids\/} {\bf 56} 1033--1046

\end{thebibliography}

\end{document}